\newtheorem{theorem}{Theorem}[section]
\newtheorem{lem}{Lemma}[section]
\newtheorem{pro}{Proposition}[section]
\newtheorem{cor}{Corollary}[section]
\newtheorem{rem}{Remark}[section]
\newtheorem{rems}{Remarks}[section]
\newtheorem{ex}{Example}[section]
\newtheorem{defi}{Definition}[section]
\newtheorem{hyp}{Assumption}[section]
\newcommand{\be}{\begin{equation}}
\newcommand{\ee}{\end{equation}}
\newcommand{\bde}{\begin{displaymath}}
\newcommand{\ede}{\end{displaymath}}
\newcommand{\beq}{\begin{eqnarray*}}
\newcommand{\eeq}{\end{eqnarray*}}
\newcommand{\beqa}{\begin{eqnarray}}
\newcommand{\eeqa}{\end{eqnarray}}
\newcommand{\bel }{\left\{\begin{array}{ll}}
\newcommand{\eel}{\cr \end{array} \right.}
\newcommand{\E}{\mathbb{E}}
\def\wt{\widetilde}
\def\F{{\cal F}}
\def\R{{\mathbb R}}
\def\rr{{\mathbb R}}
\def\P{{\mathbb P}}
\def\I{\mathbf{1}}
\newcommand{\bt}{\begin{theorem}}
\newcommand{\et}{\end{theorem}}
\newcommand{\bl}{\begin{lem}}
\newcommand{\el}{\end{lem}}
\newcommand{\bp}{\begin{pro}}
\newcommand{\ep}{\end{pro}}
\newcommand{\bcor}{\begin{cor}}
\newcommand{\ecor}{\end{cor}}
\newcommand{\bd}{\begin{defi} \rm }
\newcommand{\ed}{\end{defi}}
\newcommand{\brem }{\begin{rem} \rm }
\newcommand{\erem }{\end{rem}}
\newcommand{\brems }{\begin{rems} \rm }
\newcommand{\erems }{\end{rems}}
\newcommand{\bhyp }{\begin{hyp} \rm }
\newcommand{\ehyp }{\end{hyp}}
\newcommand{\bex}{\begin{ex} \rm }
\newcommand{\eex}{\end{ex}}
\title{{A positivity-preserving numerical scheme \\for the alpha-CEV process.}}
\author{Libo Li and Guanting Liu
\\ School of Mathematics and Statistics
\\ University of New South Wales
\\ NSW 2052, Australia
}
\begin{document}

\maketitle
\begin{abstract}
In this article, we present a method to construct a positivity-preserving numerical scheme for a jump-extended CEV (Constant Elasticity of Variance) process, whose jumps are governed by a spectrally positive $\alpha$-stable process with $\alpha \in (1,2)$. The numerical scheme is obtained by making the diffusion coefficient $x^\gamma$, where $\gamma \in (\frac{1}{2},1)$, partially implicit and then finding the appropriate adjustment factor. We show that, for sufficiently small step size, the proposed scheme converges and theoretically achieves a strong convergence rate of at least $\frac{1}{2}\left(\frac{\alpha_-}{2} \wedge \frac{1}{\alpha}\wedge \rho\right)$, where $\rho \in (\frac{1}{2},1)$ is the H\"older exponent of the jump coefficient $x^\rho$ and the constant $\alpha_- < \alpha$ can be chosen arbitrarily close to $\alpha \in (1,2)$.
\end{abstract}

\vskip 5pt \noindent {\small {\bf Key words and phrases.}  Euler-Maruyama scheme, positivity preserving implicit scheme, CBI process, alpha-CIR process and alpha-CEV process.
\vskip 5pt \noindent {\small {\bf AMS 2000 subject classification:} 60H35, 41A25, 60H10, 65C30}

\vfill\break 

\section*{Introduction}
The Cox-Ingersoll-Ross (CIR) process and the constant elasticity of variance (CEV) process have been used extensively in financial applications such as the modelling of interest rates, default rates, and stochastic volatility, e.g. Duffie et al. \cite{Duffie1, Duffie2} and Alfonsi and Brigo \cite{AB}.
    As a result, the study of strong approximation schemes and positivity-preserving strong approximation schemes for CIR and CEV processes has received a great deal of attention in the literature. We mention here the works of
Alfonsi \cite{Alf1, Alf2},
Berkaoui et al. \cite{BBD},
Alfonsi and Brigo \cite{AB},
Dereich et al. \cite{Dereichetal},
Neuenkirch and Szpruch \cite{NS},
and more recently, Bossy and Olivero \cite{BO},
Chassagneux et al. \cite{CJM},
Hefter and Herzwurm \cite{HH},
Hefter and Jentzen \cite{HJ},
and Cozma and Reisinger \cite{CR}. The literature on this topic is vast and interested readers can refer to the references within the above-mentioned works.

In this work, we study a positivity-preserving numerical scheme for an CEV process with the addition of jumps which are governed by a stable process. More specifically, given initial point $x_0>0$, $\gamma \in (\frac{1}{2},1]$ and $\rho \in ( 1 - \frac{1}{\alpha},1]$, we consider the non-negative solution to the following stochastic differential equation
\begin{align}\label{CIR_0}
dX_t & = \left(a - kX_t\right)dt + \sigma_1 (X_t)^\gamma dW_t + \sigma_2 (X_{t-})^{\rho}dZ_t, \\
X_0  & = x_0\nonumber 
\end{align}
where $a$, $\sigma_1, \sigma_2$ are non-negative and $k \in \rr$. The {\it diffusion coefficient} and the {\it jump coefficient} are given by $x\mapsto \sigma_1 (x^+)^{\gamma}$ and $x\mapsto \sigma_2 (x^+)^\rho$ respectively. The process $W$ is a Brownian motion,
and for $\alpha \in (1,2)$, the process $Z$ is a spectrally positive strictly $\alpha$-stable process independent of $W$.
In other words, the process $Z$ is a L\'evy process with the characteristic triple $(0,\nu,\gamma_0)$, where the L\'evy measure $\nu$ is given $$\nu(dx) = -\frac{1}{\cos(\pi\alpha/2)\Gamma(-\alpha)}x^{-1-\alpha} \I_{(0,\infty)}(x) dx$$ and the drift is given by $\gamma_0 = -\int^\infty_1 x \nu(dx)$.
Also, for the strictly $\alpha$-stable process $Z$,
we have the alternative form given by 
\begin{gather*}
Z_t = \int^t_0\int_0^\infty z \widetilde N(dz,ds),
\end{gather*}
where $\widetilde N$ is the associated compensated Poisson random measure of the L\'evy measure $\nu$. This class of models was first studied in the context of continuous-state branching processes with interaction or/and immigration, see for example, Li and Mytnik \cite{LiMytnik}, Fu and Li \cite{FuLi}. In general, the above SDE exhibits a unique non-negative strong solution for any integrable compensated spectrally one-sided L\'evy process $Z$, see for example Theorem 2.3 in \cite{LiMytnik}.

This work is motivated by the simulation of the so-called {\it alpha-CIR process}. 
More specifically, the alpha-CIR process corresponds to the solution to \eqref{CIR_0} when $\gamma = \frac{1}{2}$ and $\rho = \frac{1}{\alpha}$ and was recently introduced to the mathematical finance literature, in Jiao et al. \cite{JMS, JMSS, JMSZ}, to model sovereign interest rates, power and energy markets and the volatility of financial stocks. The authors in \cite{JMS, JMSS, JMSZ} argue that the model can capture persistent low interest rates, self-exciting behaviours and the large jumps exhibited by sovereign interest rates and power markets and is also suitable for the modelling of stochastic volatility. Given this, we take the liberty to refer to the solution to \eqref{CIR_0} when $\gamma\in (\frac{1}{2},1]$ and $\rho = \frac{1}{\alpha} \in (1-\frac{1}{\alpha},1]$ as the {\it alpha-CEV process} and will examine it as a special case of \eqref{CIR_0}. Although the methodologies represented below can be applied to the case $\gamma = \frac{1}{2}$, however, since the probabilistic nature of the two cases is very different, we will focus only on the case $\gamma > \frac{1}{2}$. The case $\gamma= \frac{1}{2}$ will be studied in more detail in a separate work.

In the current literature, numerical schemes for jump-extended CEV and jump-extended  CIR models have started to receive increasing attention, and we refer to Yang and Wang \cite{YangWang}, Fatemion Aghdas et al. \cite{FHT} and Stamatiou \cite{ST}.
    However, to the best of our knowledge,
for the jump-extended CEV process, the existing results have all focused on the case of Poisson jumps (finite activity jumps), and results on positivity-preserving strong approximation schemes in the case of infinite activity jumps have only appeared in our previous work, Li and Taguchi \cite{LiTaguchi}, in the case of the alpha-CIR process. For weak approximations of SDEs with non-Lipschitz coefficients driven by $\alpha$-stable processes we mention the recent work of Gottwald and Melbourne \cite{GM}.

Numerically speaking, the advantage of our scheme given in \eqref{squareddouble} is similar to that of the CIR process in Alfonsi \cite{Alf1} and the alpha-CIR process in Li and Taguchi \cite{LiTaguchi}, that is, the scheme is obtained by solving at each step a quadratic equation and all quantities involved can be easily simulated without approximation. Therefore, even in the diffusion case ($\sigma_2 = 0$), we do not need to solve for the positive root of a non-linear equation as done in \cite{Alf2, Dereichetal, NS}, where the scheme was obtained through combining the Lamperti transform and the backward Euler scheme. We mention also that the symmetrized scheme developed in  Berkaoui et al \cite{BBD}, Bossy and Olivero \cite{BO}, Bossy and Diop \cite{BossyDiop} and  Bossy et al. \cite{BGT} can potentially be applied here and the symmetrized scheme also has the advantage of not having to solve a non-linear equation at each step. However, in the case of infinite activity jumps, the local time techniques used in the proof of convergence do not appear to translate well into our setting. This is because, in addition to the continuous local time, one needs to compensate the reflected jumps and one quickly faces integrability issues. Of course, this is outside the scope of the current work and will not be studied.

We stress that although the derived scheme in \eqref{squareddouble} might appear similar to the one given in Alfonsi \cite{Alf1} and Li and Taguchi \cite{LiTaguchi}, it was initially unclear how existing techniques in the literature can be adapted to obtain a desirable scheme when $\gamma \neq \frac{1}{2}$. We remark that it was mentioned on page 4 of \cite{Alf1} that, it appears, a quadratic equation can only be obtained if $\gamma \in \{\frac{1}{2},1\}$. Also, due to the presence of infinite activity jumps, jump-adapted schemes devised by combining the Lamperti transform and the backward Euler scheme are not feasible. Hence there was a real need to search for a simulation scheme that does not make use of transformations.
The key idea behind the derivation of our discretization scheme is to make the diffusion coefficient \emph{partially implicit} and to identify an appropriate adjustment factor.
By doing so, we can obtain a positivity-preserving scheme, by solving quadratic equations, for both the alpha-CIR and the alpha-CEV.

The main difference between our scheme in the jump-extended case and the diffusion case is that, with jumps, the discriminant of proposed quadratic equation in \eqref{squareddouble} is not guaranteed to be positive. In view of this, we have to further modify it by taking the positive part of the discriminant to ensure that the scheme is well-defined on the whole time interval. Therefore our main convergence result relies on a key technical lemma, namely Lemma \ref{probability:D:negative}, which states that the probability for the discriminant at each grid point to be negative is exponentially small with respect to the step size.

The article is structured as follows. In Section \ref{Section:CEV:process}, we present the derivation of our scheme and give some auxiliary lemmas and estimates under Assumption \ref{Assumption:1}, Assumption \ref{Assumption:2} and Assumption \ref{Assumption:3} below. The proofs of these auxiliary lemmas and estimates are given in the appendix. Under these assumptions, we show in Theorem \ref{Strong:Convergence}, that the strong rate of convergence in the jump-extended CEV process given in \eqref{CIR_0} is at least $\frac{1}{2}\left(\frac{\alpha_-}{2}\wedge \frac{1}{\alpha}\wedge \rho\right)$ where $\alpha_-$ is a constant smaller than $\alpha$ and can be chosen arbitrarily close to $\alpha$. The derivation of the convergence rate is done through a careful split of the proposed scheme, making use of the martingale representation theorem in the L\'evy filtration, and applying the Yamada-Watanabe approximation technique. It is important to point out that the way in which we split the scheme in \eqref{continuous:extension} is crucial in achieving a rate of convergence which is higher than that of the Euler-Maruyama scheme, see \cite{FL, LiTaguchiEuler}, in the case of the alpha-CEV, that is $\rho = \frac{1}{\alpha}$.  See Remark \ref{Remark:splitting:cont:ext} and Remark \ref{remark1.5} for more details. 

We mention that one can show, under only Assumption \ref{Assumption:1} and Assumption \ref{Assumption:2}, that the proposed numerical scheme convergences. However, the rate of convergence would depend on $\gamma$ and is logarithmic when $\gamma = \frac{1}{2}$. To improve the rate of convergence, by removing its dependence on $\gamma$, we further assume Assumption \ref{Assumption:3} and show in Lemma \ref{CEV:positivity} that the jump-extended CEV process is strictly positive. This allows one to compute, in Lemma \ref{CEV:inverse:moments}, inverse moment estimates for the jump-extended CEV process and to make use of a technique from Berkaoui et al. \cite{BBD} to improve the rate of convergence. Although not discussed in this work, this technique enables us to improve, under the condition $a- \sigma^2/2 > 0$, the convergence rate in the alpha-CIR case from logarithmic to polynomial. Another technical point that we dealt with is in Lemma \ref{scheme:beta:moments}, where we show the existence of the $\beta$-moment for the numerical scheme, for $\beta \in [1,\alpha)$.    This result is fundamental in removing the boundedness assumption on the jump coefficient made in \cite{Ha, HaTsu, LiTaguchi, LiTaguchiEuler, ZH} and thus the removal of the truncation step and the restriction $\alpha > \sqrt{2}$ in \cite{LiTaguchi}.
Even in the case of the Euler-Maruyama scheme, this integrability issue was only recently examined in Frikha and Li \cite{FL}, where a mean-field extension of the equation from Li and Mytnik \cite{LiMytnik} together with the corresponding propagation of chaos property and the Euler-Maruyama scheme were studied.

Finally, we use $C,C',C_1,C_2,C_T,c\dots$etc, to denote positive constants, which may change from line to line. Given a stopping time $\tau$ and a process $X$, the stopped process is denoted by $X^\tau$ and, for $0\leq t_i < t_{i+1}$, we set $\Delta X_{t_i} := X_{t_{i+1}} - X_{t_i}$. For general results on L\'evy processes we refer to Sato \cite{Sato} and Applebaum \cite{Applebaum}.

\section{The positivity-preserving scheme}\label{Section:CEV:process}
In the following, we consider the equal distanced grid $\pi: 0 = t_0<t_1<...<t_n=T$ with step size $\Delta t = T/n$.
The design of this scheme is inspired by Alfonsi's work on the diffusion CIR process in \cite{Alf1}
which was later extended to the alpha-CIR in Li and Taguchi \cite{LiTaguchi}. 
We describe below the main idea behind our scheme. To this end, we start with the Euler-Maruyama scheme
\begin{equation*}
\Delta X_{t_i} = (a-kX_{t_i})\Delta t + \sigma_1 (X_{t_i})^\gamma\Delta W_{t_i} + \sigma_2 (X_{t_i})^{\rho}\Delta Z_{t_i}.
\end{equation*}
To proceed, we make the diffusion coefficient {\it partially implicit}, and consider for $i = 0,1,2,..., n-1$,
\begin{align*}
\Delta X_{t_i}
&= (a-kX_{t_i})\Delta t + \sigma_1 (X_{t_{i+1}})^{\frac{1}{2}}(X_{t_i})^{\gamma-\frac{1}{2}}\Delta W_{t_i} + \sigma_2 (X_{t_i})^\rho\Delta Z_{t_i} - \sigma_1 ((X_{t_{i+1}})^{\frac{1}{2}} - (X_{t_{i}})^{\frac{1}{2}} )(X_{t_i})^{\gamma-\frac{1}{2}}\Delta W_{t_i}.
\end{align*}
Then by summing over the index $i$ and supposing that the scheme is positive, we see that the last term, or the adjustment factor, is given by
\begin{align*}
\sum_{i=0}^n \sigma_1 ((X_{t_{i+1}})^{\frac{1}{2}} - (X_{t_{i}})^{\frac{1}{2}} )(X_{t_i})^{\gamma-\frac{1}{2}}\Delta W_{t_i} \approx \sigma_1\int_0^t X^{\gamma-\frac{1}{2}}_s d\langle X^\frac{1}{2}, W\rangle_s = \frac{\sigma_1^2}{2}\int^t_0 X_s^{2\gamma-1} ds.
\end{align*}
The above computations suggest that we should consider the following implicit scheme:  $X_{t_0}  = x_0$ and
	\begin{align}
	\Delta X_{t_{i}}& = (a-kX_{t_{i+1}})\Delta t + \sigma_1 (X_{t_{i+1}})^{\frac{1}{2}}(X_{t_i})^{\gamma-\frac{1}{2}}\Delta W_{t_i} + \sigma_2 (X_{t_i})^{\rho}\Delta Z_{t_i} - \frac{\sigma_1^2}{2}X_{t_i}^{2\gamma-1}\Delta t. \label{e2}
	\end{align}
For every $i =0,1,..., n-1$, by setting $X_{t_{i+1}}^{1/2} = x$ and rearranging equation \eqref{e2}, one can obtain a quadratic equation in $x$ given by
\begin{align}\label{quadratic:equation}
(1+k\Delta t)x^2 - \sigma_1 {X}_{t_i}^{\gamma-\frac{1}{2}} \Delta W_{t_i} x - ({X}_{t_i} + (a - \frac{\sigma_1^2}{2}X_{t_i}^{2\gamma-1})\Delta t + \sigma_2 {X}_{t_{i}}^\rho \Delta Z_{t_i})=0.
\end{align}
Therefore, if the discriminant process $D = (D_{t_i})_{i=0,\dots, n-1}$ given by
\begin{align}
		D_{t_{i}}
	&=
		\sigma_1^2(X^{n}_{t_i})^{2\gamma-1} (\Delta W_{t_i})^2  +
		4(1+k\Delta t)(		X^{n}_{t_i} 
		+ \big(a - \frac{\sigma_1^2}{2}(X^{n}_{t_i})^{2\gamma-1}\big)\Delta t
		+ \sigma_2  ( X^{n}_{t_i})^{\rho}\Delta Z_{t_i}) \label{D}
\end{align}
is non-negative at each grid point then a positivity-preserving scheme can be obtained by taking the positive solution to the quadratic equation in \eqref{quadratic:equation}.

However, the presence of $\Delta Z_{t_i}$ in the discriminant $D_{t_i}$ implies that $\mathbb{P}(D_{t_i} < 0) > 0$ and the implicit scheme in \eqref{e2} is not well defined on the whole time interval. In view of this, we further modify the implicit scheme given in \eqref{e2} by taking the positive part of the discriminant to guarantee, at each step, the existence of a unique positive root. To this end, we propose the following positivity-preserving numerical scheme $X^n$:
\begin{align}
		X^{n}_{t_{i+1}}
		&=
		\left[\frac{\sigma_1(X^{n}_{t_i})^{\gamma-\frac{1}{2}} \Delta W_{t_i}
		+ \sqrt{D_{t_{i}}^+} 
		}{2(1+k\Delta t)}\right]^2, \quad i = 0,\dots, n-1\nonumber\\
		X^n_{t_0} & = x_0.
		\label{squareddouble}
	\end{align}

\noindent In the diffusion case, $\sigma_2 =0$, one can show that, given the step size $\Delta t$ is sufficiently small (see Lemma \ref{probability:D:negative}), the discriminant process $D$ associated with \eqref{quadratic:equation} is almost surely non-negative and hence a unique positive solution exists without taking the positive part of $D$ as done in \eqref{squareddouble}. However, for $\sigma_2 >0$, due to the presence of $Z$ which is a compensated spectrally positive $\alpha$-stable process, that is a L\'evy process of infinite variation (L\'evy process of Type C), we have $\mathbb{P}(D_{t_{i}} < 0 ) > 0$. In this case it is not possible to select parameters $a$, $k$, $\sigma_1$, $\sigma_2$ and $\Delta t$ in such a way that the discriminant process $D$ is almost surely non-negative as done in the diffusion case. This observation is due to the fact that the support of $Z$ is not bounded below (see Theorem 24.10 (iii) in Sato \cite{Sato}). We expand on this comment in more detail in the remark below.

\brem \label{Rem_0}
In the case where $Z$ has finite activity (Type A) or has infinite activity and is of finite variation (Type B), see Definition 11.9 in \cite{Sato}, it is possible to find a set of conditions on the parameters  $a$, $k$, $\sigma_1$, $\sigma_2$ and $\Delta t$ to ensure that the discriminant process $D$ is non-negative. To see this, suppose that the support of the L\'evy measure $\nu$ contains $0$ (see page 148 of \cite{Sato} for the definition and properties of the support of a measure). From Theorem 24.10 (iii) in Sato \cite{Sato}, we know that the support of $\Delta Z_{t_i}$ is almost surely contained in $[\gamma_0 \Delta t_i,\infty)$, where the drift $\gamma_0$ is given by $\gamma_0 = -\int^\infty_1 x\nu(dx)$.
Therefore, we obtain 
\begin{align*}
\frac{D_{t_i}}{4(1+k\Delta t)} 
& \geq X^{n}_{t_i} + \Big(a - \frac{\sigma_1^2}{2}(X^{n}_{t_i})^{2\gamma-1}\Big)\Delta t_i + \sigma_2 |X^{n}_{t_i}|^\rho \Delta Z_{t_i} 
\geq X^{n}_{t_i} + \Big(a - \frac{\sigma_1^2}{2}(X^{n}_{t_i})^{2\gamma-1}
	+ \sigma_2 |X^{n}_{t_i}|^\rho\gamma_0\Big)\Delta t_i.
\end{align*}
By considering the convex function $z \mapsto |{X}^{n}_{t_i}|^z$ over the domain $[0,1]$ and applying Jensen's inequality, we see that $|{X}^{n}_{t_i}|^{z} \leq \left(1-z\right) + z{X}^{n}_{t_i}$ for all $z$ in the interval $[0,1]$. Since $\rho \in (1-\frac{1}{\alpha},1]$ lies in this interval, this inequality holds for $z=\rho$. Applying this inequality to the above expression gives, 
\begin{align*}
{X}^{n}_{t_i} +  	\Big(a - \frac{\sigma_1^2}{2}(X^n_{t_i})^{2\gamma-1}
	+
\sigma_2 |X^{n}_{t_i}|^\rho\gamma_0 \Big)\Delta t_i 
&\geq  \Big(1 
	+
(\sigma_2 \gamma_0\rho - \frac{\sigma^2}{2})\Delta t_i \Big){X}^{n}_{t_i} + \Big(a - \frac{\sigma_1^2}{2}
	+
\sigma_2\gamma_0\left(1-\rho\right) \Big)\Delta t_i.
\end{align*}
Hence, we arrive at the following set of sufficient conditions on the parameters
\begin{align}
1 + \Big(\sigma_2 \gamma_0\rho - \frac{\sigma^2}{2}\Big)\Delta t_i >0
	\quad \mathrm{and} \quad
a - \frac{\sigma_1^2}{2}
	+
\sigma_2\gamma_0\left(1-\rho\right)>0.\label{poissoncond}
\end{align}
Therefore, if $Z$ is the compensated Poisson process and condition \eqref{poissoncond} holds, then the implicit scheme given in \eqref{e2} gives rise to a positivity-preserving numerical scheme.
\erem

The aim of the rest of this work is to compute the strong rate of convergence for the scheme given in \eqref{squareddouble}. From this point onward we suppose that the following assumptions hold.
\bhyp\label{Assumption:1}
We make the following assumptions on the model parameters,
\begin{enumerate}[nosep]
  \setlength{\itemsep}{1pt}
\item[(i)] $a$, $\sigma_1$ and $\sigma_2$ are non-negative constants, $x_0>0$ and $k\in \mathbb{R}$.
\item[(ii)] $\alpha \in (1,2)$.
\item[(iii)] $\gamma \in (\frac{1}{2}, 1)$ and $2\gamma  < \alpha$.
\item[(iv)] $\rho \in (1-\frac{1}{\alpha}, 1)$.
\end{enumerate}
\ehyp

\bhyp\label{Assumption:2}
The step size $\Delta t = T/n$ is sufficiently small (or $n$ sufficiently large) so there exists $\kappa_0\in(0,1)$ that $\kappa_n:= 1+k\Delta t \geq \kappa_0$. In the case $\sigma_2 >0$ we require also
\begin{align*}
\Delta t^\frac{2\gamma-1}{2-2\gamma}\frac{2}{\sigma_1^2}\frac{2(1-\gamma)}{(2\gamma-1)}\Big[\frac{\sigma_1^2(2\gamma-1)}{2}\Big]^\frac{1}{2-2\gamma} -\frac{2a}{\sigma_1^2} < 0 \quad \mathrm{and}\quad  \Delta t\,\frac{2a}{\sigma_1^2}\,
\frac{(2\gamma-1)}{2(1-\gamma)}
\Big[\frac{\sigma^2_1(2-2\gamma)}{2a}\Big]^{\frac{1}{2\gamma-1}}\, -\frac{2}{\sigma_1^2} < 0.
\end{align*}
While, in the case $\sigma_2 = 0$, we require $\Delta t \leq \sigma_1^{-\frac{2}{2\gamma-1}}\frac{2}{2\gamma-1}\big[\frac{a}{1-\gamma}\big]^\frac{2(1-\gamma)}{2\gamma-1}$.
\ehyp
We point out that Assumption \ref{Assumption:1} and Assumption \ref{Assumption:2} are the minimal assumptions required to show the convergence of the proposed positivity-preserving scheme and are, without further mention, assumed throughout the rest of the article. The condition $2\gamma<\alpha$ appears naturally when estimating the quadratic variation term in the approximation error since an $\alpha$-stable process $Z$ (and hence the process $X$) can only have $\beta$-moments for $\beta<\alpha$. Note that in the diffusion setting, we effectively have $\alpha = 2$ (as everything is square integrable) and the condition $2\gamma < \alpha$ is automatically satisfied. Also, we exclude the case $\gamma = 1$ since it would contradict the assumption $2\gamma < \alpha$. The assumption on the step size $\Delta t$ in Assumption \ref{Assumption:3} is used to control the probability that the discriminant process $D$ is negative in Lemma \ref{probability:D:negative}, which is key to obtaining our convergence results.

Furthermore, in order to improve the rate of convergence, by removing its dependence on $\gamma$, through making use of inverse moment estimates of $X$, we make the following additional assumptions which are only used in the proof of Lemma \ref{CEV:positivity} and Lemma \ref{CEV:inverse:moments}.

\bhyp\label{Assumption:3} 
We suppose $k > 0 $ and $\rho \in (\frac{1}{2}, 1)$.
\ehyp
\noindent From an application point of view, Assumption \ref{Assumption:3} is not very restrictive, since $k > 0$ represents the speed of mean reversion and one appears particularly interested in the case $\rho = \frac{1}{\alpha} \in (\frac{1}{2},1)$ as discussed in \cite{JMS, JMSS, JMSZ}. 


\subsection{Continuous time dynamics}

By expanding the quadratic equation in \eqref{squareddouble} and using the equality $D_{t_{i}}^+ = D_{t_{i}} + D^-_{t_{i}}$ we obtain
\begin{align*}
		X^{n}_{t_{i+1}}
		& = \frac{\sigma_1^2(X^{n}_{t_i})^{2\gamma-1} (\Delta W_{t_i})^2}{4(1+k\Delta t)^2} + \frac{2 \sigma_1(X^{n}_{t_i})^{\gamma-\frac{1}{2}} \Delta W_{t_i} \sqrt{D_{t_{i}}^+}}{4(1+k\Delta t)^2}  + \frac{D_{t_{i}}}{4(1+k\Delta t)^2} + \frac{D_{t_{i}}^{-}}{4(1+k\Delta t)^2}
\end{align*}
where the discriminate process $D$ at time $t_{i}$ is given by
\begin{align*}
		D_{t_{i}}
	&=
		\sigma_1^2(X^{n}_{t_i})^{2\gamma-1} (\Delta W_{t_i})^2  +
		4(1+k\Delta t)(		X^{n}_{t_i} 
		+ \big(a - \frac{\sigma_1^2}{2}(X^{n}_{t_i})^{2\gamma-1}\big)\Delta t
		+ \sigma_2  ( X^{n}_{t_i})^{\rho}\Delta Z_{t_i}).
\end{align*}
After collecting the appropriate terms,
we obtain the following expression for the scheme:
	\begin{equation}\label{Scheme:expanded}
	X_{t_{i+1}}^{n}
	= X_{t_i}^n +
	(a-k_n X_{t_i}^{n})\Delta t + \sigma_1
	(X^{n}_{t_i})^{\gamma}  \Delta W_{t_i} 
	+ \sigma^n_2 (X_{t_i}^{n})^{\rho} \Delta Z_{t_i} + \Delta R^n_{t_i},
	\end{equation}
	where $k_n := k/(1+k\Delta t)$ and $\sigma_2^n := \sigma_2/(1+k\Delta t)$. The remainder term $\Delta R^n_{t_i}$ is given by
\begin{align*}
		\Delta R^n_{t_i}
	& =  \frac{\sigma_1^2 (X_{t_i}^{n})^{2\gamma-1}}{2}\Big[\frac{(\Delta W_{t_i})^2}{(1+k\Delta t)^2} - \frac{\Delta t}{1+k\Delta t} \Big]
		\nonumber \\
	&\quad+ a\Delta t\Big[ \frac{1}{1+k\Delta t} - 1 \Big]
		+\Delta M_{t_i}^n
			-\sigma_1(X^{n}_{t_i})^{\gamma}   \Delta W_{t_i}
		+ \frac{1}{2(1+k\Delta t)^2}D^-_{t_{i}},
\end{align*}
	where the term $\Delta M_{t_i}^n$ is given by
	\begin{equation*}
	\Delta M_{t_i}^n = 
	\frac{\sigma_1 (X_{t_i}^{n})^{\gamma-\frac{1}{2}}\Delta W_{t_i}}{2(1+k\Delta t)^2}
	\sqrt{D_{t_{i}}^+}
	\end{equation*}
and is a martingale increment since the term $D_{t_{i}}^+$ is an even function in $\Delta W_{t_i}$. The semimartingale decomposition of the term $\Delta R_{t_i}^n$ is then given by $\Delta R_{t_i}^n = \Delta \widehat{M}^n_{t_i}+ \Delta \overline{M}^n_{t_i}+\Delta \wt{M}^n_{t_i} + A_{t_i}^n\Delta t$ with
\begin{align}
	& \Delta \overline{M}^n_{t_i}   := \Delta M_{t_i}^n-\sigma_1 (X^{n}_{t_i})^\gamma  \Delta W_{t_i}, \label{barM}\\
	& \Delta \widehat{M}^n_{t_i} :=  \frac{\sigma_1^2 }{2}\frac{(X^{n}_{t_i})^{2\gamma-1}}{(1+k\Delta t)^2}((\Delta W_{t_i})^2-\Delta t ),\label{hatM}\\
&  \Delta \wt{M}^n_{t_i} := \frac{\Delta M^D_{t_i}}{4(1+k\Delta t)^2}\label{tildeM}\\
&   A_{t_i}^n \Delta t  :=  
(\Delta t)^2\Big[
		 -ak_n 
		-
	\frac{\sigma_1^2}{2}\frac{k (X^{n}_{t_i})^{2\gamma-1}  }{(1+k\Delta t)^2}\Big]
		+\frac{\E [D_{t_{i}}^- | \mathcal{F}_{t_i}]}{4(1+k\Delta t)^2},\label{At}
	\end{align}
	and $\Delta M_{t_i}^D := D_{t_{i}}^- - \E [D_{t_{i}}^- | \mathcal{F}_{t_i}]$. We point out that $\Delta \wt M^n$ and $A_{t_i}^n \Delta t$ have only moments up to but not including $\alpha$.
	Finally, by summing over $i= 1,\dots, n-1$ in \eqref{Scheme:expanded}, we extend
	the discrete time scheme in \eqref{Scheme:expanded} to continuous time, and write $X_t^{n} = \overline{X}_t^{n} + \overline R^n_t$ where
	\begin{align}
		&\overline{X}_t^{n} = x_0 + \int_0^t(a-k_n X_{\eta(s)}^{n})ds + \int_0^t\sigma_1(X_{\eta(s)}^{n})^\gamma dW_s + \int_0^t \sigma^n_2( X_{\eta(s)}^{n})^{\rho}dZ_s + \overline M^n_t + \widehat M^n_t , \nonumber\\
		&\overline R^n_t = \widetilde M_t^n + \int^t_0 A_{\eta(s)}^n ds, \label{continuous:extension}
	\end{align}
where for $t\in (t_i,t_{i+1}]$, $i=0,...,n-1$,
we set $\eta(t) := t_i$ and $\overline M_t^n := \E [\overline M_{t_{i+1}}^n  | \mathcal{F}_t]$. The continuous-time extensions of $\widehat M^n$ and $\widetilde M^n$ are similarly defined.

\brem\label{Remark:splitting:cont:ext}
The reason for introducing the quantities in \eqref{barM} - \eqref{At} and separating the scheme $X^n$ into $\overline X^n + \overline R^n$ will be apparent in Theorem \ref{Strong:Convergence}. Here, we mention only that, in Theorem \ref{Strong:Convergence}, we apply the Yamada-Watanabe approximation technique to $X - \overline X^n$ rather than $X- X^n$ and this allows us to exploit the fact that $\overline M$ and $\widehat M$ are both square integrable. The use of $\overline X^n$ is key in achieving a rate of convergence which improves upon the rate previously obtained in Li and Taguchi \cite{LiTaguchi, LiTaguchiEuler} and Frikha and Li \cite{FL}.
\erem

\subsection{Auxiliary lemmas and estimates}\label{Scheme:moment:estimates}
In this subsection, we compile a list of auxiliary results and estimates which are used in the proof of Theorem \ref{Strong:Convergence}. 
As the proofs of these auxiliary results are rather long, for presentation purposes, we have placed all of them in the appendix and for readers who are interested in the main result, please go directly to Theorem \ref{Strong:Convergence}. Here we only mention that Lemma \ref{probability:D:negative} and Lemma \ref{scheme:beta:moments} are key to the proof of convergence, and Assumption \ref{Assumption:3} is only used in Lemma \ref{CEV:positivity} and Lemma \ref{CEV:inverse:moments} which are later applied to remove the dependence of the convergence rate on $\gamma$ in Theorem \ref{Strong:Convergence}.

\begin{pro}[\cite{Sato}]
The compensated spectrally positive $\alpha$-stable process $Z$ is time-invariant (or selfsimilar, or strictly stable),
i.e. for $a>0$, $(Z_{at})_{t\geq 0}$ and $( a^{1/\alpha} Z_t)_{t\geq 0}$ are identical in law.
\end{pro}


\begin{pro}[\cite{JMS}]\label{prop:Laplace:alpha:stable}
For the compensated spectrally positive $\alpha$-stable process $Z$, for real number $s\geq 0$, the Laplace transform is given by
	\begin{equation*}
\E [e^{-sZ_t}] = \exp\Big( \frac{s^\alpha t}{\cos(\pi\alpha/2)}\Big).
	\end{equation*}
\end{pro}

\begin{lem}[Lemma A.1 \cite{FL}]\label{CEV:moments}
For $\beta\in[1,\alpha)$ we have $	\E [\,\sup_{t\leq T}  X_{t}^{\beta}] \leq  \infty$.
\end{lem}
By adopting a technique from Szpruch et al. \cite{SMHP} we show in Lemma \ref{CEV:positivity} that under Assumption \ref{Assumption:3} the jump-extended CEV process $X$ is strictly positive.  Then we obtain, in Lemma \ref{CEV:inverse:moments}, inverse moment estimates of $X$ on the whole time horizon $[0, T]$.

\begin{lem}\label{CEV:positivity}
Suppose Assumption \ref{Assumption:3} holds then $\P(X_t \in (0,\infty), \forall t > 0) = 1$.
\end{lem}




\begin{lem}\label{CEV:inverse:moments}
Suppose Assumption \ref{Assumption:3} holds then there exists a positive constant $C$ such that for $p>0$
	\[
	\sup_{t\leq T}\E[X_t^{-p}]\leq (x_0^{-p}+ CT) e^{kpT}.
	\]
	
\end{lem}


\begin{lem}\label{finiteE} 
The expected value of $X^n$ is finite or more specifically,
	\[\sup_{n}\max_{i=0,1,\dots,n} \mathbb{E} [X_{t_{i}}^{n}] < \infty.\]
	\end{lem}

\begin{lem}\label{2.3}
There exists a constant $C>0$ and a sufficiently large $p > 1$ such that for $\beta \in [1,\alpha)$
		\[\mathbb{E}[|D_{t_{i}}^-|^\beta]
		\leq
		C n^{-\frac{\beta}{\alpha}} \big(1 + \E[|X_{t_i}^{n}|^{\beta}]\big)
	\,\P(D_{t_{i}} < 0 \,)^\frac{1}{p}.
\]
\end{lem}

\noindent Next, we give two key estimates. The first is Lemma \ref{probability:D:negative} where we show the probability that the discriminant $D$ is negative is exponentially small with respect to the step size $\Delta t = Tn^{-1}$. The second is Lemma \ref{scheme:beta:moments} where we show that, for $\beta \in (1,\alpha)$, the $\beta$-moment of
our scheme is finite.

\begin{lem}\label{probability:D:negative}
There exist constants $C$, $C'>0$ such that 
\[\max_{i=0,1,...,n-1}\P\big(D_{t_{i}} < 0 \,\big) \,\leq\, C'\exp(-C n^{(\frac{1}{\alpha}+\rho-1)}).
\]
If $\sigma_2=0$ then we have $\P(D_{t_i}<0)=0$ for all $i=0,1,...,n-1$.

\end{lem}

\begin{lem}\label{scheme:beta:moments}
For $\beta\in(1,\alpha)$ we have $\sup_n \mathbb{E}\big[\max_{i=0,1,...,n} |X^n_{t_{i}}|^\beta\big] < \infty.$
\end{lem}
\noindent We now give estimates of the remainder term $R = \widehat{M}^n + \overline{M}^n+ \wt{M}^n + \int A_{\eta(s)}^n ds $ and $\overline R=\wt{M}^n + \int A_{\eta(s)}^n ds$.
\begin{lem}\label{martingale:components}
For $\beta \in[1,2]$, we have
\begin{align*}	
\max_{i=0,1,\dots,n-1} \E[|\Delta \widehat{M}^n_{t_i}|^\beta] \leq C n^{-\beta} 
\quad\mathrm{and}\quad
\max_{i=0,1,\dots,n-1} \E[|\Delta \overline{M}^n_{t_{i}}|^\beta] \leq C n^{-\frac{\beta}{2}-\frac{\beta}{2\alpha}},
\end{align*}	
and for any $t \in [0,T]$, we have $\E[|\widehat M^n_t|^{\beta}]\leq C n^{-\frac{\beta}{2}}$
and $\E[|\overline M^n_t|^{\beta}]\leq C n^{-\frac{\beta}{2\alpha}}$. In the case $\sigma_2 = 0$, we have 
\begin{gather*}
\E[|\Delta \overline{M}^n_{t_{i}}|^\beta] \leq C n^{-{\beta}} \quad \mathrm{and} \quad \E[|\overline M^n_t|^{\beta}]\leq C n^{-\frac{\beta}{2}}.
\end{gather*}
\end{lem}

\begin{lem}\label{alphamartingale}
For $\beta \in[1,\alpha)$, we have
\begin{align*}	
\max_{i=0,1,\dots,n-1} \E[|\Delta \widetilde M^n_{t_i}|^\beta] \leq  C n^{-2\beta}
\quad  \mathrm{and} \quad
\max_{i=0,1,\dots,n-1} \E[|A_{t_i}^n\Delta t|^{\beta}] \leq	C  n^{-2\beta},
\end{align*}	
and for any $t \in [0,T]$, we have $\E[|\widetilde M^n_t|^{\beta}]\leq C n^{-\beta}$
and $\E \big[\big| \int_0^t A_{\eta(s)}^n ds \big|^{\beta}\big]\leq C n^{-\beta}$.
\end{lem}

\bcor\label{cor1}
For $\beta \in [1,\alpha)$ we have $\E[|\overline R^n_{t}|^\beta] < C_Tn^{-\beta}$.
\ecor

\begin{lem}\label{LocalLocal}
For $\beta\in[1,\alpha)$ there exists a positive constant $C$ such that
	\[ 
	\sup_{t\leq T}\E[|\overline X^{n}_t - X^{n}_{\eta(t)}|^{\beta}]
	\leq  C n^{-\frac{\beta}{2}}.
	\]	
\end{lem}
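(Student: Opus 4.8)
The plan is to expand $\bar X^{n}_t - X^{n}_{\eta(t)}$ term by term on each interval $t\in(t_i,t_{i+1}]$ and estimate every resulting piece in $L^{\beta}$. First I would note that summing the discrete decomposition \eqref{doubledecomp} over the grid gives $X^{n}_{t_i} = \bar X^{n}_{t_i} + \bar R^{n}_{t_i}$ with $\bar R^{n}_{t_i} = \tilde M^{n}_{t_i} + \int_0^{t_i} A^{n}_{\eta(s)}\,ds$ as in \eqref{ContExt}; hence, since $\eta(s)=t_i$ for $s\in(t_i,t]$,
\begin{align*}
\bar X^{n}_t - X^{n}_{\eta(t)}
&= (a-k_nX^{n}_{t_i})(t-t_i) + \sigma_1(X^{n}_{t_i})^{\gamma}(W_t-W_{t_i}) + \sigma_2(X^{n}_{t_i})^{\frac{1}{\alpha}}(Z_t-Z_{t_i}) \\
&\quad + (\bar M^{n}_t - \bar M^{n}_{t_i}) + (\hat M^{n}_t - \hat M^{n}_{t_i}) - \tilde M^{n}_{t_i} - \int_0^{t_i} A^{n}_{\eta(s)}\,ds .
\end{align*}
By $|x_1+\cdots+x_7|^{\beta}\le 7^{\beta-1}\sum_j|x_j|^{\beta}$ (valid for $\beta\ge1$) it then suffices to bound the $\beta$-th moment of each of the seven terms by $Cn^{-\beta/2}$, uniformly in $i$ and $t$.

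For the drift term, $|t-t_i|\le n^{-1}$, $k_n$ is bounded, and $\sup_n\max_i\E[(X^{n}_{t_i})^{\beta}]<\infty$ by Lemma \ref{FiniteBetaMoment}, so it is $O(n^{-\beta})$. For the Brownian term I would condition on $\mathcal F_{t_i}$ and use independence, writing $\E[|\sigma_1(X^{n}_{t_i})^{\gamma}(W_t-W_{t_i})|^{\beta}] = \sigma_1^{\beta}\E[(X^{n}_{t_i})^{\gamma\beta}]\,\E[|W_t-W_{t_i}|^{\beta}]$; since $\gamma\beta<\beta<\alpha$ the first factor is bounded by Lemma \ref{FiniteBetaMoment} and $\E[|W_t-W_{t_i}|^{\beta}]\le c_\beta n^{-\beta/2}$, which yields the dominant contribution $Cn^{-\beta/2}$. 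For the jump term I would again condition on $\mathcal F_{t_i}$ and exploit the self-similarity of the strictly $\alpha$-stable process $Z$, namely $Z_t-Z_{t_i}\stackrel{d}{=}(t-t_i)^{1/\alpha}Z_1$ with $\E[|Z_1|^{\beta}]<\infty$ since $\beta<\alpha$, together with $\E[(X^{n}_{t_i})^{\beta/\alpha}]\le C$, to obtain $Cn^{-\beta/\alpha}\le Cn^{-\beta/2}$, the last step because $\alpha<2$.

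The remaining four terms are controlled directly by Lemma \ref{RemainderAndMore}. Since the continuous-time extensions of $\bar M^{n}$ and $\hat M^{n}$ on $(t_i,t_{i+1}]$ are $\mathcal F_t$-conditional expectations of their values at $t_{i+1}$, the martingale property gives $\bar M^{n}_t-\bar M^{n}_{t_i}=\E[\Delta\bar M^{n}_{t_i}\mid\mathcal F_t]$ and similarly for $\hat M^{n}$, so conditional Jensen together with Lemma \ref{RemainderAndMore} gives $\E[|\bar M^{n}_t-\bar M^{n}_{t_i}|^{\beta}]\le\E[|\Delta\bar M^{n}_{t_i}|^{\beta}]\le Cn^{-\beta/2-\beta/(2\alpha)}$ and $\E[|\hat M^{n}_t-\hat M^{n}_{t_i}|^{\beta}]\le Cn^{-\beta}$; moreover $\E[|\tilde M^{n}_{t_i}|^{\beta}]\le Cn^{-\beta/\alpha}$ and $\E[|\int_0^{t_i}A^{n}_{\eta(s)}\,ds|^{\beta}]\le Cn^{-\beta}$. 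Each of these exponents is at least $\beta/2$, so summing the seven bounds and taking the supremum over $t\le T$ concludes the proof.

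I do not expect a genuine analytic obstacle here, since Lemmas \ref{FiniteBetaMoment} and \ref{RemainderAndMore} already carry the weight. The points that require care are purely bookkeeping: getting the decomposition right (in particular retaining the $\tilde M^{n}$ and $\int A^{n}$ contributions that distinguish $X^{n}_{t_i}$ from $\bar X^{n}_{t_i}$), identifying the Brownian increment as the dominant term, and checking that every power of $X^{n}_{t_i}$ that appears --- namely $\beta$, $\gamma\beta$ and $\beta/\alpha$ --- stays strictly below $\alpha$ so that Lemma \ref{FiniteBetaMoment} applies.
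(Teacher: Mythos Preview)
Your proposal is correct and follows essentially the same approach as the paper: both expand $\bar X^{n}_t - X^{n}_{\eta(t)}$ via \eqref{ContExt} into the drift, Brownian, stable, $\bar M^n$, $\hat M^n$, $\tilde M^n$ and $\int A^n$ pieces, then bound each in $L^\beta$ using Lemma \ref{FiniteBetaMoment} and Lemma \ref{RemainderAndMore}. Your write-up is in fact slightly more explicit than the paper's --- you spell out the conditional-Jensen step $\bar M^{n}_t-\bar M^{n}_{t_i}=\E[\Delta\bar M^{n}_{t_i}\mid\mathcal F_t]$ and the self-similarity argument for the stable increment --- but the underlying argument is the same.
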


\subsection{The Yamada-Watanabe approximation technique}
For completeness, we include the Yamada-Watanabe approximation technique
(see for example
Yamada and Watanabe \cite{YW},
Gy{\"o}ngy and R{\'a}sonyi \cite{GyongyRasonyi},
Li and Mytnik \cite{LiMytnik},
or Li and Taguchi \cite{LiTaguchiEuler}).
For each $\delta \in(1,\infty)$ and $\varepsilon\in(0,1)$ we select a continuous function $\psi_{\delta,\varepsilon} :  \R \rightarrow \R^+ $
with support $[\varepsilon/\delta, \varepsilon]$,
and the function $\psi_{\delta,\varepsilon}$ satisfies that
\[\int_{\varepsilon/\delta}^{\varepsilon}\psi_{\delta,\varepsilon} (z)dz = 1
\hspace{0.2cm} \text{and} \hspace{0.2cm}0\leq \psi_{\delta,\varepsilon} (z) \leq \frac{2}{z\log \delta},
\hspace{0.2cm}
\forall z>0.\]
Define a function $\phi_{\delta,\varepsilon} \in C^2(\R ;\R)$ by setting
\begin{align}
\phi_{\delta,\varepsilon} (x)
:=
\int_0^{|x|} \int_0^y
\psi_{\delta,\varepsilon} (z)dzdy. \label{YWfunction}
\end{align}
Then this $C^2$ function $\phi_{\delta,\varepsilon}$ satisfies the following useful properties:
\begin{align}
&|x| \leq \varepsilon + \phi_{\delta,\varepsilon}(x),
		\hspace{0.2cm}
		\text{for any}
		\hspace{0.2cm}
		x\in\R; \label{YW1}\\
&0\leq |\phi_{\delta,\varepsilon}'(x)|\leq 1,
		\hspace{0.2cm}
		\text{for any}
		\hspace{0.2cm}
		x\in\R; \label{YW2}\\
&\phi_{\delta,\varepsilon}'(x)\geq 0
		\hspace{0.2cm}
		\text{for}
		\hspace{0.2cm}
		x\geq 0;
		\hspace{0.2cm}
		\phi_{\delta,\varepsilon}'(x)\leq 0
		\hspace{0.2cm}
		\text{for}
		\hspace{0.2cm}
		x\leq 0; \nonumber \\
&\phi_{\delta,\varepsilon}''(\pm |x|)
		=
		\psi_{\delta,\varepsilon}(|x|)
		\leq
		\frac{2}{|x|\log \delta}\I_{[\varepsilon/\delta,\varepsilon]}(|x|)
		\leq
		\frac{2\delta}{\varepsilon\log \delta},
		\hspace{0.2cm}
		\text{for any}
		\hspace{0.2cm}
		x\in\R \setminus \{0\}. \label{YW4}
\end{align}
In addition, to estimate the jump terms, we include the following key lemmas from Li and Taguchi \cite{LiTaguchiEuler} and Frikha and Li \cite{FL}. For the reader's convenience, we have included the proofs in the appendix.
\begin{lem}[\cite{LiTaguchiEuler}, Lemma 1.3]\label{LTE1.3}
	Suppose the L{\'e}vy measure $\nu$ satisfies $\int_0^{\infty} (z\wedge z^2) \nu (dz)<\infty$.
	Let $\delta\in(1,\infty)$ and $\varepsilon\in(0,1)$.
	Then for any $x\in\R, y\in\R \setminus \{0\}$ with $xy\geq 0$
	and $u>0$, it holds that
\begin{align*}
&\int_0^{\infty}
	\{\phi_{\delta,\varepsilon}(y+xz)-\phi_{\delta,\varepsilon}(y) - xz\phi_{\delta,\varepsilon}'(y) \} \nu (dz)\\
&\hspace{2cm}\leq
	2\cdot\I_{(0,\varepsilon]}(|y|) 
	\left\{
	\frac{|x|^2}{\log \delta} \left(\frac{1}{|y|}\wedge \frac{\delta}{\varepsilon} \right)\int_0^u z^2\nu(dz)
	+ |x|\int_u^{\infty} z\nu(dz)
	\right\}.
\end{align*}
\end{lem}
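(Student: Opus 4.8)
The statement to prove is the L\'evy-measure Yamada--Watanabe estimate of Lemma~\ref{LTE1.3}, and the plan is the standard one: Taylor-expand $\phi_{\delta,\varepsilon}$ around $y$, split the $z$-integral at the free threshold $u$, use the second-order remainder on $\{z\le u\}$ and the first-order remainder on $\{z>u\}$, and in both regimes exploit the sign condition $xy\ge 0$. First I would reduce to $y>0$, $x\ge 0$: the case $x=0$ is trivial, and the case $y<0$, $x\le 0$ follows from the fact that $\phi_{\delta,\varepsilon}$ is even (so $\phi_{\delta,\varepsilon}'$ is odd), which makes the integrand $\phi_{\delta,\varepsilon}(y+xz)-\phi_{\delta,\varepsilon}(y)-xz\phi_{\delta,\varepsilon}'(y)$ and the quantities $|x|,|y|$ invariant under $(x,y)\mapsto(-x,-y)$. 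The structural fact that drives everything is that, since $xy\ge 0$ and $z>0$, the point $y+\theta xz$ stays on the same side of the origin as $y$ and $|y+\theta xz|\ge|y|$ for every $\theta\in[0,1]$; moreover $\phi_{\delta,\varepsilon}''=\psi_{\delta,\varepsilon}(|\cdot|)\ge 0$ on $[0,\infty)$, so $\phi_{\delta,\varepsilon}$ is convex there and the integrand is nonnegative, hence it suffices to bound it from above.

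For the small jumps $z\in(0,u]$ I would write the second-order remainder $\phi_{\delta,\varepsilon}(y+xz)-\phi_{\delta,\varepsilon}(y)-xz\phi_{\delta,\varepsilon}'(y)=(xz)^2\int_0^1(1-\theta)\,\phi_{\delta,\varepsilon}''(y+\theta xz)\,d\theta$ and estimate the remainder term via \eqref{YW4}: $\phi_{\delta,\varepsilon}''(y+\theta xz)=\psi_{\delta,\varepsilon}(|y+\theta xz|)\le\frac{2}{\log\delta}\big(\tfrac{1}{|y+\theta xz|}\wedge\tfrac{\delta}{\varepsilon}\big)\I_{[\varepsilon/\delta,\varepsilon]}(|y+\theta xz|)$, and since $|y+\theta xz|\ge|y|$ this is at most $\frac{2}{\log\delta}\big(\tfrac{1}{|y|}\wedge\tfrac{\delta}{\varepsilon}\big)\I_{(0,\varepsilon]}(|y|)$ --- here the support $[\varepsilon/\delta,\varepsilon]$ of $\psi_{\delta,\varepsilon}$ together with $|y|\le|y+\theta xz|\le\varepsilon$ is exactly what forces the indicator on $|y|$. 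Using $\int_0^1(1-\theta)\,d\theta=\tfrac12$ and integrating $(xz)^2$ against $\nu$ over $(0,u]$ yields the first term on the right-hand side (in fact with constant $1$ rather than $2$).

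For the large jumps $z>u$ I would use the first-order remainder $\phi_{\delta,\varepsilon}(y+xz)-\phi_{\delta,\varepsilon}(y)-xz\phi_{\delta,\varepsilon}'(y)=\int_0^{xz}\big(\phi_{\delta,\varepsilon}'(y+s)-\phi_{\delta,\varepsilon}'(y)\big)\,ds$. Writing $\phi_{\delta,\varepsilon}'(w)=\mathrm{sgn}(w)\int_0^{|w|}\psi_{\delta,\varepsilon}(r)\,dr$ and noting that $r\mapsto\int_0^r\psi_{\delta,\varepsilon}$ is nondecreasing, $[0,1]$-valued and equal to $1$ for $r\ge\varepsilon$, the integrand $\phi_{\delta,\varepsilon}'(y+s)-\phi_{\delta,\varepsilon}'(y)$ is nonnegative, bounded by $1$ (this is \eqref{YW2}), and vanishes as soon as $y\ge\varepsilon$; hence it is $\le\I_{(0,\varepsilon]}(|y|)$. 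Therefore the remainder is at most $|x|z\,\I_{(0,\varepsilon]}(|y|)$, and integrating against $\nu$ over $(u,\infty)$ gives the second term. Adding the two contributions proves the inequality, with room to spare against the constant $2$.

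The argument involves no genuine obstacle; the only points that need care are (i) using $xy\ge 0$ so that $|y+\theta xz|$ never drops below $|y|$ --- this is precisely what lets both the factor $\tfrac1{|y|}\wedge\tfrac\delta\varepsilon$ and the cutoff $\I_{(0,\varepsilon]}(|y|)$ be expressed in terms of $y$ alone --- and (ii) choosing to bound the unsigned second-order remainder on $\{z\le u\}$ while exploiting the monotone, $[0,1]$-valued structure of $r\mapsto\int_0^r\psi_{\delta,\varepsilon}$ on $\{z>u\}$, rather than crudely bounding $|\phi_{\delta,\varepsilon}'(y+s)-\phi_{\delta,\varepsilon}'(y)|\le 2$, which would lose the indicator. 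The splitting parameter $u>0$ is kept free, exactly as in the statement, so that it can be optimised later by the caller.
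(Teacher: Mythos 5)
Your proof is correct. The paper does not actually prove this lemma---it quotes it as Lemma 1.3 of Li and Taguchi \cite{LiTaguchiEuler}---but your argument (reduction to $x\ge 0$, $y>0$ by the evenness of $\phi_{\delta,\varepsilon}$, the second-order Taylor remainder combined with \eqref{YW4} and $|y+\theta xz|\ge |y|$ on $\{z\le u\}$, and the first-order remainder using the monotone $[0,1]$-valued primitive of $\psi_{\delta,\varepsilon}$ on $\{z>u\}$) is exactly the standard argument behind the cited result, and it even gives the constant $1$ in place of $2$.
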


\begin{lem}[\cite{FL}, Lemma 4.3]\label{xPrime}
	For $\alpha\in(1,2)$, let $\nu$ be the $\alpha$-stable L{\'e}vy measure given by
	$\nu(x) = x^{-\alpha-1}\I_{(0,\infty)}(x)dx$.
	Let $\delta\in(1,\infty)$ and $\varepsilon\in(0,1)$.
	Then for any $\alpha_0 \in[\alpha,2]$
	there exists a positive constant $C$
	such that for any $u\in(0,\infty)$
	and
	any $x,x',y\in\R$
	satisfying $x'y\geq 0$ we have
\begin{align}
&\int_0^{\infty}|\phi_{\delta,\varepsilon}(y+xz)-\phi_{\delta,\varepsilon}(y+x'z) - (x-x')z\phi_{\delta,\varepsilon}'(y) | \nu (dz) \nonumber\\
&\leq C\Big[ \Big(\frac{\delta}{\varepsilon \log \delta} +1\Big) |x-x'|^{\alpha_0} + |x-x'| \\
&\qquad +|x-x'| \Big\{ \frac{\I_{(0,\varepsilon]}(|y|)}{\log \delta} \Big( \frac{1}{|y|}\wedge \frac{\delta}{\varepsilon} \Big)|x'|\int_0^u z^2\nu(dz) + \int_u^{\infty} z\nu(dz)  \Big\} \Big]. \nonumber
\end{align}
\end{lem}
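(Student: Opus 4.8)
We prove the inequality by decomposing the integrand. Set $F(z):=\phi_{\delta,\varepsilon}(y+xz)-\phi_{\delta,\varepsilon}(y+x'z)-(x-x')z\phi_{\delta,\varepsilon}'(y)$ and write $F=F_1+F_2$ with
\[
F_1(z):=\phi_{\delta,\varepsilon}(y+xz)-\phi_{\delta,\varepsilon}(y+x'z)-(x-x')z\phi_{\delta,\varepsilon}'(y+x'z),\qquad
F_2(z):=(x-x')z\big(\phi_{\delta,\varepsilon}'(y+x'z)-\phi_{\delta,\varepsilon}'(y)\big).
\]
The term $F_1$ is the second-order Taylor remainder of $\phi_{\delta,\varepsilon}$ at the point $y+x'z$ with increment $(x-x')z$, and it will produce the terms $(\tfrac{\delta}{\varepsilon\log\delta}+1)|x-x'|^{\alpha_0}+|x-x'|$; the term $F_2$ will produce the bracket carrying $\I_{(0,\varepsilon]}(|y|)$. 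We may assume $x\neq x'$, otherwise $F\equiv0$; and if $x'=0$ then $F_2\equiv0$ and only the $F_1$ estimate below is needed.

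For $F_1$, the integral form of Taylor's remainder together with $0\le\phi_{\delta,\varepsilon}''\le\frac{2\delta}{\varepsilon\log\delta}$ from \eqref{YW4} gives $|F_1(z)|\le\frac{\delta}{\varepsilon\log\delta}(x-x')^2z^2$, while \eqref{YW2} and $|\phi_{\delta,\varepsilon}(a)-\phi_{\delta,\varepsilon}(b)|\le|a-b|$ give the complementary bound $|F_1(z)|\le2|x-x'|z$. I would then estimate $\int_0^\infty|F_1(z)|\nu(dz)$ by splitting the $z$-integral at a level $w>0$, distinguishing $|x-x'|\le1$ and $|x-x'|>1$. For $|x-x'|\le1$ take $w=1$: the $z^2$-bound on $(0,1]$ contributes $\le\frac{1}{2-\alpha}\frac{\delta}{\varepsilon\log\delta}(x-x')^2\le\frac{1}{2-\alpha}\frac{\delta}{\varepsilon\log\delta}|x-x'|^{\alpha_0}$ (using $\alpha_0\le2$), and the linear bound on $(1,\infty)$ contributes $\le\frac{2}{\alpha-1}|x-x'|$. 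For $|x-x'|>1$ take $w=\frac{\varepsilon\log\delta}{\delta|x-x'|}$: both pieces are then a dimensional constant times $|x-x'|^{\alpha}\big(\frac{\delta}{\varepsilon\log\delta}\big)^{\alpha-1}$, which is $\le\frac{\delta}{\varepsilon\log\delta}|x-x'|^{\alpha_0}$ because $\frac{\delta}{\varepsilon\log\delta}\ge e/\varepsilon>1$ (so its $(\alpha-1)$-th power is at most itself) and $\alpha\le\alpha_0$ with $|x-x'|>1$. Altogether $\int_0^\infty|F_1(z)|\nu(dz)\le C(\alpha)\big[(\tfrac{\delta}{\varepsilon\log\delta}+1)|x-x'|^{\alpha_0}+|x-x'|\big]$.

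For $F_2$ (so $x'\neq0$), the crucial observation is that $x'y\ge0$ forces, for every $t\in[0,1]$ and $z>0$, the point $y+tx'z$ to lie on the same side of the origin as $y$ with $|y+tx'z|\ge|y|$. Since $\phi_{\delta,\varepsilon}'$ is constant ($=\pm1$) outside $[-\varepsilon,\varepsilon]$ and $\phi_{\delta,\varepsilon}''=\psi_{\delta,\varepsilon}(|\cdot|)$ is supported in $\{|\cdot|\le\varepsilon\}$, the increment $\phi_{\delta,\varepsilon}'(y+x'z)-\phi_{\delta,\varepsilon}'(y)$ vanishes unless $|y|\le\varepsilon$; and when $|y|\le\varepsilon$, writing it as $\int_{[y,\,y+x'z]}\phi_{\delta,\varepsilon}''$ and bounding $\phi_{\delta,\varepsilon}''$ on that segment by $\frac{2}{\log\delta}\big(\frac{1}{|y|}\wedge\frac{\delta}{\varepsilon}\big)$ (via $|\cdot|\ge|y|$ and \eqref{YW4}) yields $|F_2(z)|\le\frac{2}{\log\delta}\big(\frac{1}{|y|}\wedge\frac{\delta}{\varepsilon}\big)|x'|\,|x-x'|z^2\,\I_{(0,\varepsilon]}(|y|)$, while $\int_{\R}\phi_{\delta,\varepsilon}''=2$ gives $|F_2(z)|\le2|x-x'|z$. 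Splitting $\int_0^\infty|F_2(z)|\nu(dz)$ at an arbitrary $u>0$, using the first bound on $\{z\le u\}$ and the second on $\{z>u\}$ together with $\int_0^uz^2\nu(dz)=\frac{u^{2-\alpha}}{2-\alpha}$ and $\int_u^\infty z\nu(dz)=\frac{u^{1-\alpha}}{\alpha-1}$, reproduces up to a constant the third bracket of the claimed bound — this is precisely the mechanism of Lemma \ref{LTE1.3}. Adding the two estimates gives the lemma.

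The main obstacle is the $F_1$ estimate: extracting the single clean term $(\tfrac{\delta}{\varepsilon\log\delta}+1)|x-x'|^{\alpha_0}$ uniformly over $\alpha_0\in[\alpha,2]$ works only after separating $|x-x'|\le1$ from $|x-x'|>1$, choosing the truncation level in the $z$-integral to depend on $|x-x'|$ in the second regime, and using $\frac{\delta}{\varepsilon\log\delta}>1$. The $F_2$ part is routine once one notices that $x'y\ge0$ both confines $\phi_{\delta,\varepsilon}''$ to the part of the segment with $|\cdot|\ge|y|$ and kills $F_2$ off $\{|y|\le\varepsilon\}$; the only minor care is with the degenerate cases $x=x'$, $x'=0$ and $y=0$ (the last not occurring when $x'\neq0$ in the intended applications).
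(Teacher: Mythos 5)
The paper does not actually prove this lemma: its ``proof'' is a one-line deferral to Lemma 4.3 of Frikha and Li \cite{FL}, with the sole remark that the explicit form of the stable L\'evy measure allows the endpoint $\alpha_0=\alpha$. Your write-up is therefore a genuinely different route in the sense that it is self-contained, and it checks out. The decomposition $F=F_1+F_2$ into the second-order Taylor remainder at $y+x'z$ and the first-derivative increment, the two pointwise bounds $|F_1(z)|\le \frac{\delta}{\varepsilon\log\delta}(x-x')^2z^2$ and $|F_1(z)|\le 2|x-x'|z$, and the bound $|F_2(z)|\le \frac{2}{\log\delta}\big(\frac{1}{|y|}\wedge\frac{\delta}{\varepsilon}\big)|x'|\,|x-x'|\,z^2\,\I_{(0,\varepsilon]}(|y|)$ together with $|F_2(z)|\le 2|x-x'|z$ (using $x'y\ge 0$ both to make the increment of $\phi_{\delta,\varepsilon}'$ vanish for $|y|>\varepsilon$ and to keep the segment at distance at least $|y|$ from the origin) are all consistent with \eqref{YW2}, \eqref{YW4} and the explicit moments $\int_0^w z^2\nu(dz)=\frac{w^{2-\alpha}}{2-\alpha}$, $\int_w^\infty z\nu(dz)=\frac{w^{1-\alpha}}{\alpha-1}$; the $F_2$ part indeed reproduces the mechanism of Lemma \ref{LTE1.3}. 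The added value of your argument over the paper's citation is that it makes explicit why $\alpha_0=\alpha$ is attainable and why the constant is free of $\delta$ and $\varepsilon$: the split of the $F_1$ integral at $w=\varepsilon\log\delta/(\delta|x-x'|)$ when $|x-x'|>1$, combined with $\delta/(\varepsilon\log\delta)\ge e/\varepsilon>1$ and $|x-x'|^{\alpha}\le|x-x'|^{\alpha_0}$, is exactly the point behind the paper's remark, and the $\varepsilon$-uniformity is what the application requires since later $\varepsilon=n^{-r}$.

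The only configuration you do not cover is $y=0$ with $x'\neq 0$, which is allowed by the hypothesis $x'y\ge 0$; there $F_2$ need not vanish while the printed indicator $\I_{(0,\varepsilon]}(|y|)$ does. This is a defect of the statement as transcribed rather than of your argument: taking $y=0$, $x'=1$, $x=1+\varepsilon^{s}$ with $s>\frac{2-\alpha}{\alpha_0-1}$ and letting $u\to\infty$, the left-hand side is at least $\frac{\varepsilon^{s+1-\alpha}}{\alpha-1}$ while the right-hand side is of order $\varepsilon^{s\alpha_0-1}+\varepsilon^{s}$, so no constant independent of $\varepsilon$ can make the displayed bound hold at $y=0$. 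With the indicator read as $\I_{\{|y|\le\varepsilon\}}$ and $\frac{1}{|y|}\wedge\frac{\delta}{\varepsilon}$ interpreted as $\frac{\delta}{\varepsilon}$ at $y=0$, your segment estimate (now simply using $\psi_{\delta,\varepsilon}\le\frac{2\delta}{\varepsilon\log\delta}$) covers that case verbatim, and in the paper's application $y=\bar{Y}^{n}_{s-}$ so the configuration is immaterial; flagging it, as you did, is the right call.
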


\brem 
The proof of Lemma \ref{xPrime} is identical to that of Lemma 4.3 of Frikha and Li \cite{FL} except that $\alpha_0$ is allowed to take value $\alpha$ here since the form of the L\'evy measure is known explicitly.
\erem

\subsection{Strong rate of convergence}\label{section:convergence}
In this subsection, we state our main result on the strong rate of convergence. We present only the result with Assumption \ref{Assumption:3}. This is because the rate of convergence without Assumption \ref{Assumption:3} (under only Assumption \ref{Assumption:1} and Assumption \ref{Assumption:2}) is far from optimal and can be easily deduced from the proof of Theorem \ref{Strong:Convergence}.

\begin{theorem}\label{Strong:Convergence}
Suppose Assumptions \ref{Assumption:3} holds then there exist some constant $C_T>0$ such that 
\begin{gather*}
	\sup_{t\leq T}\E[|X_t - X_t^{n}|] \leq C_Tn^{-\frac{1}{2}q(\alpha,\rho)} 
\end{gather*}
where $q(\alpha,\rho)$ is given by $\frac{\alpha_-}{2}\wedge \frac{1}{\alpha}\wedge \rho$ and the constant $\alpha_- \in(1,\alpha)$ can be chosen arbitrary close to $\alpha$.
\end{theorem}

\brem
Without going into technical details, the term $1/\alpha$ in $q(\alpha,\rho)$ derives from the increment of the $\alpha$-stable process in the martingale $\overline M^n$ defined through \eqref{barM}.
While the appearance of the term $\alpha_-/2$ is due to an integrability constraint in computing \eqref{EJ}, more specifically, the term $|\overline X^n-X^n_{\eta}|$ has only moments up to $\alpha$. Finally, the term $\rho$ stems from the H\"older exponent of the jump coefficient. In the case of the diffusion CEV, the jump coefficient is zero and hence the rate $1/\alpha$ does not appear in the estimate of the martingale $\overline M^n$, and the scheme $X^n$ is square integrable.
That is, one can take $\alpha_- = 2$ and both $1/\alpha$ and $\rho$ will not appear in $q(\alpha,\rho)$, hence resulting in a convergence rate of $1/2$. For more details see Corollary \ref{t1c}.
\erem

\brem
Although we are unable to provide a proof, our intuition suggest that the optimal rate of convergence is $\alpha_-/4$. This intuition comes from the fact that as $\alpha \uparrow 2$, we approach the Gaussian case in which we expect to have a convergence rate of $1/2$.
\erem

\begin{proof}
We recall that our scheme $X^n$ is given in \eqref{Scheme:expanded} and it can be decomposed into $X^n = \overline{X}^n + \overline{R}^n$  where
	\begin{align}
		&\overline{X}_t^{n} = x_0 + \int_0^t(a-k_n X_{\eta(s)}^{n})ds + \int_0^t\sigma_1(X_{\eta(s)}^{n})^\gamma dW_s + \int_0^t \sigma^n_2( X_{\eta(s)}^{n})^{\rho}dZ_s + \overline M^n_t + \widehat M^n_t , \nonumber\\
		&\overline R^n_t = \widetilde M_t^n + \int^t_0 A_{\eta(s)}^n ds.\nonumber 
	\end{align}
From the triangular inequality we have $|X - X^{n}| \leq |X-\overline{X}^n| + |\overline{R}^n|$ and by Corollary \ref{cor1} we have the estimate
	$\E[|\overline{R}^n_t|]\leq Cn^{-1}$. From the martingale representation theorem for square integrable martingales in a L\'evy filtration, see for example Theorem 5.3.6 in Applebaum \cite{Applebaum}, we know that there exist predictable and square-integrable processes $F$ and $G$ such that for $t\geq 0$ we have
	\[\overline{M}^n_t +\widehat{M}^n_t = \int_0^t F(s)dW_s + \int_0^t \int_0^\infty G(s,z)\widetilde{N}(dz,ds).\]
In view of the above, we focus on estimating $\E[|X-\overline{X}^n|]$. To proceed with the Yamada-Watanabe approximation, we set $\overline{Y}^n:=X-\overline{X}^n$ and denote its jumps by $\Delta \overline{Y}^n_t (z) := \sigma_2 [X_{t-}^{\rho} - (X_{\eta(t)}^{n})^{\rho}]z-G(t,z)$.
	Suppose $\varepsilon \in (0,1)$ and $\delta>1$,
	using property \eqref{YW1} and the It\^o formula, see for example Theorem 4.4.7 in \cite{Applebaum}, we have
	\begin{equation*}
	|\overline{Y}^{n}_t|
	\leq
	\varepsilon + \phi_{\delta,\varepsilon}(\overline{Y}^{n}_t)
	=
	\varepsilon
	+M_t^{n,\delta,\varepsilon}
	+I_t^{n,\delta,\varepsilon}
	+J_t^{n,\delta,\varepsilon}
	+K_t^{n,\delta,\varepsilon},
	\end{equation*}
	where $\phi_{\delta,\varepsilon}$ is the Yamada-Watanabe function given in \eqref{YWfunction} and the terms above are given by
\begin{align*}
M_t^{n, \delta,\varepsilon}
&:= \int_0^t \phi_{\delta,\varepsilon}' (\overline{Y}^{n}_{s-}) \{ \sigma_1[ X_{s-}^\gamma  - (X_{\eta(s)}^{n})^\gamma]-F(s) \}dW_s \nonumber \\
&\qquad + \int_0^t \int_0^{\infty} \left\{ \phi_{\delta,\varepsilon}(\overline{Y}^{n}_{s-} + \Delta \overline{Y}^{ n}_s (z) ) - \phi_{\delta,\varepsilon}(\overline{Y}^{ n}_{s-}) \right\} \widetilde{N}(ds,dz),\\
I_t^{n, \delta,\varepsilon} 
&:= \int_0^t \phi_{\delta,\varepsilon}'(\overline{Y}^{ n}_{s-}) \{ -k X_{s-} + k_n X_{\eta(s)}^{ n}\} ds,\\
J_t^{n, \delta,\varepsilon} 
&:= \frac{1}{2} \int_0^t \phi_{\delta,\varepsilon}'' (\overline{Y}^{ n}_{s-})\{ \sigma_1 [ X_{s-}^\gamma  - (X_{\eta(s)}^{ n})^\gamma]-F(s) \}^2 ds, \\
	K_t^{n, \delta,\varepsilon} 
&:=  \int_0^t \int_0^{\infty} \{\phi_{\delta,\varepsilon}(\overline{Y}^{ n}_{s-} + \Delta \overline{Y}^{n}_s (z) ) - \phi_{\delta,\varepsilon}(\overline{Y}^{ n}_{s-}) - \Delta \overline{Y}^{n}_s (z)  \phi_{\delta,\varepsilon}'(\overline{Y}^{ n}_{s-}) \} \nu(dz)ds.
\end{align*}
	Using the standard localisation arguments, the martingale term $M_t^{n,\delta,\varepsilon}$ can be eliminated by taking the expectation, 
and we focus on finding upper estimates for $I_t^{n,\delta,\varepsilon}$, $J_t^{n,\delta,\varepsilon}$ and $K_t^{n,\delta,\varepsilon}$.

\newpage

\noindent {\bf Estimates for the drift and diffusion term $I$ and $J$:} Let us first consider the term $I_t^{n,\delta,\varepsilon}$.
	Note that the quantity $|k-k_n|$ is of order $n^{-1}$
	and we can apply Lemma \ref{CEV:moments}, Lemma \ref{LocalLocal} and property \eqref{YW2}
	to obtain
	\begin{align}
		\E[|I_t^{n,\delta,\varepsilon}|] 
	&\leq
		\E \big[ \int_0^t |\phi_{\delta,\varepsilon}'(\overline{Y}^{n}_{s-})| |k_n X_s - k_n X_{\eta (s)}^{n}
		+ (k-k_n)X_s| ds \big] \nonumber \\
	&\leq  |k_n|\int_0^t \big(\E[|X_s -  X_{s}^{ n}|]  + \E [|X_s^{n} -  X_{\eta(s)}^{ n}|] \big)ds  + |k-k_n| \int_0^t \E[|X_s|]ds \nonumber \\
	&\leq C_T \Big(
		\int_0^t \E[|X_s -  X_{s}^{n}|] ds +  n^{-\frac{1}{2}}\Big). \label{eqI}
	\end{align}
Next we consider the term $J^{n,\delta,\varepsilon}$. By Jensen's inequality
\begin{align}
J_t^{n, \delta,\varepsilon} 
	&	= \frac{1}{2} \int_0^t \phi_{\delta,\varepsilon}'' (\overline{Y}^{n}_{s}) \left\{ \sigma_1 (X_{s}^\gamma- (X_{\eta(s)}^{n})^\gamma )-F(s) \right\}^2 ds \nonumber\\
	&	\leq C\int_0^t \phi_{\delta,\varepsilon}''(\overline{Y}^{ n}_{s}) \left(X_{s}^\gamma- (X_{\eta(s)}^{ n})^\gamma\right)^2 ds + C\int_0^t \phi_{\delta,\varepsilon}''(\overline{Y}^{ n}_{s})  F(s)^2 ds. \label{estimateJ}	
\end{align}
The second term in \eqref{estimateJ} can be estimated using \eqref{YW4} and Lemma \ref{martingale:components}, where we have $\E[\int_0^t F(s)^2  ds] \leq \E[|\overline M^n_t +  \widehat M^n_t|^2]  \leq C n^{-\frac{1}{\alpha}}$. To estimate the first term in \eqref{estimateJ}, we first recall, for $x, y \in \mathbb{R}_+$ and $b>a \geq 0$,
\begin{equation}
|x^a - y^a| \leq |x^b - y^b|x^{-(b- a)}. \label{bossy}
\end{equation}
Then for any $\alpha_-$ such that $2\gamma < \alpha_- < \alpha$  we have
\begin{align}
 & \int_0^t \phi_{\delta,\varepsilon}'' (\overline{Y}^{ n}_{s}) \left(X_{s}^\gamma- (X_{\eta(s)}^{ n})^\gamma\right)^2 ds 
\leq C\int_0^t \phi_{\delta,\varepsilon}'' (\overline{Y}^{ n}_{s}) |X_{s}^{\alpha_-/2} - (X^n_{\eta(s)})^{\alpha_-/2}|^2 X_{s}^{-(\alpha_- -2\gamma)}ds\nonumber \\
&\leq C\int_0^t \phi_{\delta,\varepsilon}'' (\overline{Y}^{ n}_{s}) |\overline{Y}^n_s|^{2} X_s^{-2(1-\gamma)} ds + C\int_0^t \phi_{\delta,\varepsilon}'' (\bar{Y}^{ n}_{s})  |\overline X_{s}^n- X^n_{\eta(s)}|^{\alpha_-}  X_{s}^{-(\alpha_- -2\gamma)}ds. \label{bossyexample}
\end{align}
To obtain the second inequality in the above, by noticing that $\overline X^n$ is not positive, we first apply the triangular inequality to write $|(X)^{\frac{\alpha_-}{2}} - (X^n_{\eta})^{\frac{\alpha_-}{2}}|  \leq |X^{\frac{\alpha_-}{2}} - ((\overline X^n)^+)^{\frac{\alpha_-}{2}}| + |((\overline X^n)^+)^{\frac{\alpha_-}{2}} - (X^n_\eta)^{\frac{\alpha_-}{2}}|.$
Then we apply inequality \eqref{bossy}, with $b= 1$ and $a = \alpha_-/2$, to the first term in the sum and conclude using the fact that the function $x\mapsto (x^+)^\zeta$ for $\zeta \in (0,1]$ is H\"older continuous with H\"older exponent $\zeta$.

By using the indicator function in \eqref{YW4} we deduce that $|\overline Y^n_s| \leq \epsilon$ and, from the inverse moment estimates in Lemma \ref{CEV:inverse:moments}, the expected value of the above can be further upper bounded by
\begin{gather*}
C \left\{\varepsilon  +  \frac{\delta}{\varepsilon \log\delta}  \mathbb{E}[\int_0^t|\overline X_{s}^n- X^n_{\eta(s)}|^{\alpha_-} \,  X_{s}^{-(\alpha_- -2\gamma)}ds]\right\},
\end{gather*}
with the constant $\alpha_- < \alpha$. To proceed, we select $p>1$, so that $p\alpha_-  < \alpha$, and let $q$ be the H{\"o}lder conjugate of $p$. Then by applying H\"older's inequality with $p$ and $q$, we obtain
\begin{align}
& \frac{C\delta}{\varepsilon\log \delta} \E[\int_0^t |\overline{X}^n_s- X_{\eta(s)}^{ n}|^{\alpha_-}\, X_s^{-(\alpha_- -2\gamma)}  ds] \label{holderexample}\\
&  \leq\frac{C\delta}{\varepsilon\log \delta}  \int_0^t \E[ |\overline{X}^n_s- X_{\eta(s)}^{ n}|^{p\alpha_-}]^{\frac{1}{p}} \E[X_s^{-q(\alpha_- -2\gamma)} ]^{\frac{1}{q}}ds
\leq \frac{C\delta n^{-\frac{\alpha_-}{2}}}{\varepsilon\log \delta}.\nonumber 
\end{align}
Note that although the H{\"o}lder conjugate $q$ can be very large, we can still control the inverse moments using Lemma \ref{CEV:inverse:moments}, and obtain an upper estimate independent of $p$ and $q$. More explicitly, we know from Lemma \ref{CEV:inverse:moments} that for $\gamma\in(1/2,1)$
	\[	
	\sup_{t\leq T} \E[|X_t|^{-q}]
	\leq (x_0^{-q}+ C_fT) \exp(qkT)
	\leq C_1 \exp\{C_2q\}.
	\]
By combining all the above estimates, we obtain
	\begin{gather}
	\E[|J_t^{n, \delta,\varepsilon}|] \leq C\left\{ \frac{\varepsilon}{\log \delta} + \frac{\delta}{\varepsilon \log \delta}(n^{-\frac{\alpha_{-}}{2}}+  n^{-\frac{1}{\alpha}}
	)\right\}. \label{EJ}
	\end{gather}

It is important to point out here that the worst rate in the remainder $R^n$ is coming from the square integrable martingale $\overline M^n$. This observation motivated our choice of the proxy process $\overline X^n$ and, by doing so, one can maximise the rate by taking advantage of square integrability.
Before proceeding to the estimate of the jump terms, we point out that the above estimates obtained in \eqref{eqI} and \eqref{EJ} are purely associated with the diffusion part and, although some later computations might not be optimal, we will not strife to improve them as long as the obtained rates are not worse than those obtained in \eqref{eqI} and \eqref{EJ}.

\vskip5pt
\noindent {\bf Estimates for the jump term $K$:} 
	Now, we estimate the term $K^{n,\delta,\varepsilon}_t$. Again we point out that, in the following, the method of proof is rather repetitive and we mainly make use of Lemma \ref{LTE1.3}, Lemma \ref{xPrime}, inequality \eqref{bossy} and, for $\alpha_0 \in [\alpha, 2]$, the quantities
	\begin{equation}
	I^{\alpha_0}_x :=x^{\alpha_0-2} \int_0^x z^2 \nu(dz) = \frac{x^{\alpha_0-\alpha}}{2-\alpha}
	\quad \text{and} \quad
	J^{\alpha_0}_x := x^{\alpha_0-1} \int_x^\infty z \nu(dz) = \frac{x^{\alpha_0-\alpha}}{\alpha-1} \label{IJ}
	\end{equation}
where we note that the constants in \eqref{IJ} diverges as $\alpha$ approaches one or two. To this end, we first decompose $K_t^{n,\delta,\varepsilon}$ into
	$K_t^{n,\delta,\varepsilon} = K_t^{n,\delta,\varepsilon,1}+K_t^{n,\delta,\varepsilon,2}+K_t^{n,\delta,\varepsilon,3}$ where
\begin{align*}
K_t^{n,\delta,\varepsilon,1}
& := \int_0^t \int_0^{\infty} \{\phi_{\delta,\varepsilon}(\overline{Y}^{ n}_{s-}
    + \sigma_2^n[ X_{s-}^{\rho}  - ((\overline{X}_{s}^{n})^+)^{\rho}]z ) 
    - \phi_{\delta,\varepsilon}(\overline{Y}^{ n}_{s-})\\
&\quad- \sigma_2^n[ X_{s-}^{\rho}  - ((\overline{X}_{s}^{n})^+)^{\rho}]z  \phi_{\delta,\varepsilon}'(\overline{Y}^{ n}_{s-}) \} \nu(dz)ds,\\
K_t^{n,\delta,\varepsilon,2}
& := \int_0^t \int_0^{\infty} \{\phi_{\delta,\varepsilon}(\overline{Y}^{ n}_{s-} + \sigma_2^n[X_{s-}^{\rho} - (X_{\eta(s)}^{n})^{\rho} ]z )\\
&\quad-\phi_{\delta,\varepsilon}(\overline{Y}^{ n}_{s-}+ \sigma_2^n[ X_{s-}^{\rho}  - ((\overline{X}_{s}^{n})^+)^{\rho}]z )- \sigma_2^n  [((\overline{X}_{s}^{n})^+)^{\rho} - (X_{\eta(s)}^{n})^{\rho} ]z  \phi_{\delta,\varepsilon}'(\overline{Y}^{n}_{s}) \} \nu(dz)ds,\\
K_t^{n,\delta,\varepsilon,3}
& := \int_0^t \int_0^{\infty} \{\phi_{\delta,\varepsilon}(\overline{Y}^{ n}_{s-} + \sigma_2^n[X_{s-}^{\rho} - (X_{\eta(s)}^{n})^{\rho} ]z - G(s,z)) \\
&\quad-\phi_{\delta,\varepsilon}(\overline{Y}^{ n}_{s-}
+ \sigma^n_2[X_{s-}^{\rho} - (X_{\eta(s)}^{n})^{\rho} ]z )+G(s,z)  \phi_{\delta,\varepsilon}'(\overline{Y}^{n}_{s}) \} \nu(dz)ds.
\end{align*}
To estimate the term $K_t^{n,\delta,\varepsilon,1}$, we set $y = \overline{Y}^n_{s-} = X_{s-} - \overline{X}^{n}_s$	and
$x = \sigma_2^n[ X_{s-}^{\rho}  - ((\overline{X}_{s}^{n})^+)^{\rho}]$.
Since $xy\geq 0$ we can apply Lemma \ref{LTE1.3} and inequality \eqref{bossy}, with $a = \rho$ and $b = 1$, to the term $|X_{s-}^{\rho} - ((\overline{X}^n_s)^+ )^{\rho} |$ to obtain for any $u>0$ the following upper estimate for $|K_t^{n,\delta,\varepsilon,1}|$,
\begin{align*}
& 	C\I_{(0,\varepsilon]}(|\overline Y^n_{s-}|) 
	\left\{
	\frac{|X_{s-}^{\rho}  - ((\overline{X}_{s}^{n})^+)^{\rho}|^2}{\log \delta} \left(\frac{1}{|\overline Y^n_{s-}|}\wedge \frac{\delta}{\varepsilon} \right)\int_0^u z^2\nu(dz)
	+ |X_{s-}^{\rho}  - ((\overline{X}_{s}^{n})^+)^{\rho}|\int_u^{\infty} z\nu(dz)
	\right\}\\
	& \leq 	C\I_{(0,\varepsilon]}(|\overline Y^n_{s-}|) 
	\left\{
	\frac{|X_{s-}  - (\overline{X}_{s}^{n})^+|^2}{X_{s-}^{2(1-\rho)}\log \delta} \left(\frac{1}{|\overline Y^n_{s-}|}\wedge \frac{\delta}{\varepsilon} \right)\int_0^u z^2\nu(dz)
	+ X_{s-}^{-(1-\rho)}|X_{s-}- (\overline{X}_{s}^{n})^+|\int_u^{\infty} z\nu(dz)
	\right\}\\
& \leq 	C\I_{(0,\varepsilon]}(|\overline Y^n_{s-}|) 
	\left\{
	\frac{|\bar Y^n_{s-}|^2}{X_{s-}^{2(1-\rho)}\log \delta} \left(\frac{1}{|\overline Y^n_{s-}|}\wedge \frac{\delta}{\varepsilon} \right)\int_0^u z^2\nu(dz)
	+ X_{s-}^{-(1-\rho)}|\overline Y^n_{s-}|\int_u^{\infty} z\nu(dz)
	\right\}
\end{align*}
where in the last inequality, we have used the fact that $x\mapsto x^+$ is H\"older continuous with H\"older exponent one. Then by using the indicator $\I_{(0,\varepsilon]}(|\overline Y^n_{s-}|)$ and \eqref{IJ}, we see that
\begin{align*}
|K_t^{n,\delta,\varepsilon,1}| &\leq \frac{C}{\log \delta} X_s^{-2(1-\rho)} \varepsilon \int_0^u z^2 \nu(dz) + C X_s^{-(1-\rho)} \varepsilon \int_u^{\infty} z \nu(dz)\\
&\leq C_T \varepsilon \Big( X_s^{-2(1-\rho)} \frac{1}{\log\delta}u^{2-\alpha_1} I^{\alpha_1}_{u} + X_s^{-(1-\rho)} u^{-(\alpha_1-1)}J^{\alpha_1}_u  \Big)
\end{align*}
where $\alpha_1 \in [\alpha, 2]$.
	Finally, by setting $u=\log(\delta)$ we see that the above can be upper estimated by
	\begin{align*}
	 C_T\, \varepsilon \log(\delta)^{1-\alpha}\Big( X_s^{-2(1-\rho)} I^{\alpha_1}_{\log(\delta)} + X_s^{-(1-\rho)} J^{\alpha_1}_{\log(\delta)}  \Big)
	\end{align*}
and by using Lemma \ref{CEV:inverse:moments}, we obtain
\begin{align}
\E[|K_t^{n,\delta,\varepsilon,1}|] \leq C_T\, \varepsilon \log(\delta)^{1-\alpha}\big( I^{\alpha_1}_{\log(\delta)} + J^{\alpha_1}_{\log(\delta)}  \big).\label{eq35}
\end{align}

To estimate the term $K_t^{n,\delta,\varepsilon,2}$,
we let $y= \overline{Y}^{ n}_{s-} $,
$x=\sigma^n_2\{X_{s-}^{\rho} - (X_{\eta(s)}^{n})^{\rho} \}$ and
$x' = \sigma_2^n\{X_{s-}^{\rho} - ((\overline{X}_{s}^{n})^+)^{\rho} \}$
in Lemma \ref{xPrime}. Since $x'y\geq 0$, we have for $\alpha_2 \in [\alpha,2]$:
\begin{align*}
& |K_t^{n,\delta,\varepsilon,2}| 
	 \leq C\Bigg[ \Big(\frac{\delta}{\varepsilon \log \delta} +1\Big) |((\overline{X}^n_s)^+ )^{\rho} - (X_{\eta(s)}^{n})^{\rho}|^{\alpha_2} + |((\overline{X}_{s}^{n})^+)^{\rho} - (X_{\eta(s)}^{n})^{\rho}| \\
	&+ |((\overline{X}^n_s)^+ )^{\rho} - (X_{\eta(s)}^{n})^{\rho}|\,\Big\{ \frac{\I_{(0,\varepsilon]}(|\overline{Y}_{s-}^n|)}{\log \delta} \Big( \frac{1}{|\overline{Y}_{s-}^n|}\wedge \frac{\delta}{\varepsilon} \Big)
|X_{s-}^{\rho} - ((\overline{X}^n_s)^+ )^{\rho} |\int_0^u z^2\nu(dz) 
+ \int_u^{\infty} z\nu(dz)  \Big\} \Bigg].
\end{align*}
By applying inequality \eqref{bossy}, with $b= \rho$ and $a = 1$, to the term $|X_{s-}^{\rho} - ((\overline{X}^n_s)^+ )^{\rho} |$, the right hand side above can be estimated by 
\begin{align*}
& C\Big[ \Big(\frac{\delta}{\varepsilon \log \delta} +1\Big) |\overline{X}^n_s - X_{\eta(s)}^{n}|^{\alpha_2\rho} 
		+ |\overline{X}^n_s - X_{\eta(s)}^{n}|^{\rho} + |\overline{X}^n_s - X_{\eta(s)}^{n}|^{\rho}
		\Big\{ \frac{\delta}{\log \delta} X_s^{-(1-\rho)} \int_0^u z^2\nu(dz) 
		+ \int_u^{\infty} z\nu(dz)  \Big\} \Big]\\
&\leq C\Big[ \Big(\frac{\delta}{\varepsilon \log \delta} +1\Big) |\overline{X}^n_s  - X_{\eta(s)}^{n}|^{\alpha_2\rho}
		+ |\overline{X}^n_s  - X_{\eta(s)}^{n}|^{\rho}		\\
& \qquad + |\overline{X}^n_s - X_{\eta(s)}^{n}|^{\rho} 
		  \Big\{ \frac{\delta}{\log \delta} u^{2-\alpha_3} I^{\alpha_3}_u + u^{-(\alpha_3-1)}J^{\alpha_3}_u  \Big\}\big(X_s^{-(1-\rho)}  +1 \big)	\Big]
\end{align*}
where $\alpha_3\geq \alpha$ is chosen later. We choose $u = \delta^{-1}\log(\delta)$, and obtain that
	\[
	\Big|\frac{\delta}{\log \delta} u^{2-\alpha_3} I^{\alpha_3}_u + u^{-(\alpha_3-1)}J^{\alpha_3}_u \Big|
	\leq
	C \Big(\frac{\delta}{\log\delta} \Big)^{\alpha_3-1} 
	\]
where we note that $\log(\delta)/\delta <1$ for $\delta>1$. 
Then by applying Lemma \ref{LocalLocal} and H\"older's inequality as done in \eqref{holderexample}, we obtain
	\begin{align}
		\E[|K_t^{n,\delta,\varepsilon,2}|] 
	&\leq
		C_T\Big[\Big(\frac{\delta}{\varepsilon \log \delta} +1\Big)n^{-\frac{\alpha_2\rho}{2}}
		+ n^{-\frac{\rho}{2}}
		+\Big(\frac{\delta}{\log\delta} \Big)^{\alpha_3-1} n^{-\frac{\rho}{2}}
		\Big]\nonumber \\
	&\leq
		C_T\Big[\frac{\delta}{\varepsilon \log \delta}\, n^{-\frac{\alpha_2\rho}{2}}
		+\Big(\Big(\frac{\delta}{\log\delta} \Big)^{\alpha_3-1}  + 1 \Big) n^{-\frac{\rho}{2}}
		\Big],	\label{eq36}
	\end{align}
	with the constraints $\alpha_2 \in [\alpha,2]\cap (0,\alpha/\rho)$ and $\alpha_3\in [\alpha,2]$.

To estimate the term $K^{n,\delta,\varepsilon,3}_t$,
	we further decompose it into 
	$K^{n,\delta,\varepsilon,3}_t = K^{n,\delta,\varepsilon,3,1}_t+K^{n,\delta,\varepsilon,3,2}_t + K^{n,\delta,\varepsilon,3,3}_t$ which are given by
\begin{align*}
K^{n,\delta,\varepsilon,3,1}_t
& 		:= \int_0^t \int_0^{\infty} \{\phi_{\delta,\varepsilon}(\overline{Y}^{ n}_{s-} + \sigma_2^n[X_{s-}^{\rho} - (X_{\eta(s)}^{n})^{\rho} ]z - G(s,z)) \\
&\quad  -\phi_{\delta,\varepsilon}(\overline{Y}^{ n}_{s-}
		+ \sigma_2^n[X_{s-}^{\rho} - (X_{\eta(s)}^{n})^{\rho} ]z )+G(s,z)  \phi_{\delta,\varepsilon}'(\overline{Y}^{ n}_{s-} + \sigma_2^n[X_{s-}^{\rho} - (X_{\eta(s)}^{n})^{\rho} ]z) \} \nu(dz)ds,\\
K^{n,\delta,\varepsilon,3,2}_t
&		:= \int_0^t \int_0^{\infty} G(s,z) \{\phi_{\delta,\varepsilon}'(\overline{Y}^{ n}_{s-}) -\phi_{\delta,\varepsilon}'(\overline{Y}^{ n}_{s-}+ \sigma_2^n[ X_{s-}^{\rho}  - ((\overline{X}^n_s)^+ )^{\rho}]z ) \}  \nu(dz)ds,\\
K^{n,\delta,\varepsilon,3,3}_t
&		:= \int_0^t \int_0^{\infty} G(s,z) \Big\{\phi_{\delta,\varepsilon}'(\overline{Y}^{ n}_{s-}+ \sigma_2^n[ X_{s-}^{\rho}  - ((\overline{X}^n_s)^+ )^{\rho}]z )\\
&\quad -\phi_{\delta,\varepsilon}'(\overline{Y}^{ n}_{s-} + \sigma_2^n[X_{s-}^{\rho} - (X_{\eta(s)}^{n})^{\rho} ]z)\Big\} \nu(dz)ds.
\end{align*}
By using the second-order Taylor expansion, the first term $K^{n,\delta,\varepsilon,3,1}_t$ can be estimated by
	\begin{align*}
	G(s,z)^2 \int_0^1 (1-\theta) \phi_{\delta,\varepsilon}'' \big(\overline{Y}^{ n}_{s-} + \sigma_2^n[X_{s-}^{\rho} - (X_{\eta(s)}^{n})^{\rho} ]z -\theta G(s,z)\big) \, d\theta 
	& \leq \frac{\delta G(s,z)^2}{\varepsilon \log \delta},
	\end{align*}
 and by Lemma \ref{martingale:components}, we obtain
\begin{align}
\E[|K^{n,\delta,\varepsilon,3,1}_t|]
&\leq \frac{\delta}{\varepsilon\log\delta}  \E[\int_0^t\int_0^\infty G(s,z)^2 \nu(dz) ds] \leq \frac{\delta}{\varepsilon\log\delta} 
\E[|\overline{M}^n_t+\widehat{M}^n_t|^2]\leq C \frac{\delta}{\varepsilon\log\delta}  n^{-\frac{1}{\alpha}}. \label{eq37}
\end{align}

To estimate the second term $K^{n,\delta,\varepsilon,3,2}_t$, we consider separately $\{z \leq u \}$ and $\{z > u \}$ for $u > 0$.
	On the set $\{z\leq u\}$ we apply the first-order Taylor expansion, the fact that $\overline{Y}^{ n}_{s-}[X_{s-}^{\rho}  - ((\overline{X}^n_s)^+)^{\rho}]\geq 0$ and inequality \eqref{bossy}, with $a= \rho$ and $b = 1$, to obtain	
\begin{align}
\E[|K_t^{n,\delta,\varepsilon,3,2}|]
	&\leq \E\big[\int_0^t \int_0^u  |G(s,z)| \I_{(0, \varepsilon]}(|\overline{Y}^{ n}_{s-}|)\, \frac{2\sigma_2^n}{\log \delta} \Big( \frac{1}{|\overline{Y}^{ n}_{s-}|} \wedge \frac{\delta}{\varepsilon} \Big) \,  z|X_{s-}^{\rho}  - ((\overline{X}^n_s)^+)^{\rho}|  \nu(dz)ds \big]\nonumber\\
&\leq \E\big[\int_0^t \int_0^u   \I_{(0, \varepsilon]}(|\overline{Y}^{ n}_{s-}|)\, \frac{2\sigma_2^n}{\log \delta} \Big( \frac{1}{|\overline{Y}^{ n}_{s-}|} \wedge \frac{\delta}{\varepsilon} \Big) \times \big( G(s,z)^2 +  z^2 X^{-2(1-\rho)} |X_{s-}- \overline{X}^n_s|^2 \big)  \nu(dz)ds \big]\nonumber\\
    &\leq C_T \frac{\delta}{\varepsilon\log\delta}\, n^{-\frac{1}{\alpha}} + C_T \frac{\varepsilon }{\log\delta} \int_0^u z^2 \nu(dz), \label{BzIndi}
\end{align}
	where we've used Young's inequality in the second last line. While on the set $\{z>u\}$, by applying H{\"o}lder's inequality we obtain 
	\begin{align}
	& \E\Big[\Big| \int_0^t \int_u^\infty G(s,z) \Big\{\phi_{\delta,\varepsilon}'(\overline{Y}^{ n}_{s-}) -\phi_{\delta,\varepsilon}'(\overline{Y}^{ n}_{s-}+ \sigma_2^n[ X_{s-}^{\rho}  - ((\overline{X}^n_s)^+ )^{\rho}]z ) \Big\}  \nu(dz)ds \Big|\Big] \nonumber \\
	&\leq \E\Big[\int_0^t \int_u^\infty G(s,z)^2 \nu(dz)ds \Big]^\frac{1}{2}\times \E\Big[\int_0^t \int_u^\infty  |\phi_{\delta,\varepsilon}'(\overline{Y}^{ n}_{s-}) -\phi_{\delta,\varepsilon}'(\overline{Y}^{ n}_{s-}+ \sigma_2^n[ X_{s-}^{\rho}  - ((\overline{X}^n_s)^+ )^{\rho}]z ) |^2 \nu(dz)ds \Big]^\frac{1}{2}. \nonumber
	\end{align}
To estimate the second term, we first use the property that $|\phi'_{\delta,\epsilon}| \leq 1$ in \eqref{YW2} to reduce the power of the integrand from two to one and then apply the first-order Taylor expansion with \eqref{YW4} and inequality \eqref{bossy}, with $a = \rho$ and $b= 1$, to obtain
	\begin{align}
& |\phi_{\delta,\varepsilon}'(\overline{Y}^{ n}_{s-}) -\phi_{\delta,\varepsilon}'(\overline{Y}^{ n}_{s-}+ \sigma_2^n[ X_{s-}^{\rho}  - ((\overline{X}^n_s)^+)^{\rho}]z ) |^2  \\
&  \leq  2 |\phi_{\delta,\varepsilon}'(\overline{Y}^{ n}_{s-}) -\phi_{\delta,\varepsilon}'(\overline{Y}^{ n}_{s-}+ \sigma_2^n[ X_{s-}^{\rho}  - ((\overline{X}^n_s)^+)^{\rho}]z ) | \nonumber\\
& \leq \I_{(0, \varepsilon]}(|\overline{Y}^{ n}_{s-}|)\, \frac{4\sigma_2^n }{\log \delta} \Big( \frac{1}{|\overline{Y}^{ n}_{s-}|} \wedge \frac{\delta}{\varepsilon} \Big) z  |\overline{Y}^{ n}_{s-}|X_{s-}^{-(1-\rho)}  \leq \frac{4\sigma_2^n}{\log \delta}z  X_{s}^{-(1-\rho)}.\nonumber 
	\end{align}
More explicitly, the last two inequalities can be obtained through the following estimates given by
	\begin{align}
&|\phi_{\delta,\varepsilon}'(\overline{Y}^{ n}_{s-}) -\phi_{\delta,\varepsilon}'(\overline{Y}^{ n}_{s-}+ \sigma_2^n[ X_{s-}^{\rho}  - ((\overline{X}^n_s)^+)^{\rho}]z ) |\nonumber \\
&\leq \int_0^1 \sigma_2^n | X_{s-}^{\rho}  - ((\overline{X}^n_s)^+)^{\rho}| z\, \phi_{\delta,\varepsilon}''(\overline{Y}^{ n}_{s-}+ \theta\sigma_2 |X_{s-}^{\rho}  - ((\overline{X}^n_s)^+)^{\rho} | z )\,(1-\theta)d\theta\nonumber \\
&\leq \int_0^1 \sigma_2^n |X_{s-}^{\rho}  - ((\overline{X}^n_s)^+)^{\rho}|z(1-\theta)\, \frac{2\I_{[\varepsilon/\delta,\varepsilon]}(|\overline{Y}^{ n}_{s-}+ \sigma_2^n[ X_{s-}^{\rho}  - ((\overline{X}^n_s)^+)^{\rho}]z|)}{|\overline{Y}^{ n}_{s-}+ \sigma_2^n[ X_{s-}^{\rho}  - ((\overline{X}^n_s)^+)^{\rho}z|\log \delta}\, d\theta. \label{TaylorOut}
	\end{align}
Since $\overline{Y}^{ n}_{s-}[X_{s-}^{\rho}  - ((\overline{X}^n_s)^+)^{\rho}]\geq 0$,
we can deduce that $|\overline{Y}^{ n}_{s-}+ \sigma_2^n[ X_{s-}^{\rho}  - ((\overline{X}^n_s)^+)^{\rho}]z|\geq |\overline{Y}^{ n}_{s-}|$
and hence
\[\I_{[\varepsilon/\delta,\varepsilon]}(|\overline{Y}^{ n}_{s-}+ \sigma_2^n[ X_{s-}^{\rho}  - ((\overline{X}^n_s)^+)^{\rho}]z|)
\leq
\I_{(0,\varepsilon]}(|\overline{Y}^{ n}_{s-}+ \sigma_2^n[ X_{s-}^{\rho}  - ((\overline{X}^n_s)^+)^{\rho}]z|)
\leq
\I_{(0,\varepsilon]}(|\overline{Y}^{ n}_{s-}|).\]
Thus for the fraction in \eqref{TaylorOut} we have
\begin{align*}
\frac{\I_{[\varepsilon/\delta,\varepsilon]}(|\overline{Y}^{ n}_{s-}+ \sigma_2^n[ X_{s-}^{\rho}  - ((\overline{X}^n_s)^+)^{\rho}]z|)}{|\overline{Y}^{ n}_{s-}+ \sigma_2^n[ X_{s-}^{\rho}  - ((\overline{X}^n_s)^+)^{\rho}]z|}
&\leq \I_{(0, \varepsilon]}(|\overline{Y}^{ n}_{s-}|) \Big( \frac{1}{|\overline{Y}^{ n}_{s-}|} \wedge \frac{\delta}{\varepsilon} \Big).
\end{align*}
Then by using inequality \eqref{bossy}, with $b = 1$ and $a = \rho$, we obtain $$|X_{s-}^{\rho}  - ((\overline{X}^n_s)^+)^{\rho}|\leq |X_{s-}  - (\overline{X}^n_s)^+|X_{s-}^{-(1-\rho)} \leq |\overline{Y}^{ n}_{s-}|X_{s-}^{-(1-\rho)}$$ which shows that \eqref{TaylorOut} can be estimated by
\begin{align*}
&\int_0^1 (1-\theta)d\theta\  \sigma_2^n[ X_{s-}^{\rho}  - (\overline{X}^n_s)^{\rho}]z\, \frac{2}{\log \delta}\I_{(0, \varepsilon]}(|\overline{Y}^{ n}_{s-}|) \Big( \frac{1}{|\overline{Y}^{ n}_{s-}|} \wedge \frac{\delta}{\varepsilon} \Big)\\
&\leq
\sigma_2^n\overline{Y}^{ n}_{s-}X_{s-}^{-(1-\rho)}z\, \frac{1}{\log \delta}\I_{(0, \varepsilon]}(|\overline{Y}^{ n}_{s-}|) \Big( \frac{1}{|\overline{Y}^{ n}_{s-}|} \wedge \frac{\delta}{\varepsilon} \Big)\leq \frac{\sigma_2^n}{\log\delta}z X_{s-}^{-(1-\rho)}.
\end{align*}
By Lemma \ref{CEV:inverse:moments} and Lemma \ref{martingale:components}, we have
	\begin{align}
		\E[|K_t^{n,\delta,\varepsilon,3,2}|]
		&\leq \E \Big[ \int_0^t \int_u^\infty G(s,z)^2   \nu(dz)ds \Big]^\frac{1}{2}
		\E \Big[ \frac{4\sigma_2^n\, }{\log \delta} \int_u^\infty  z  \nu(dz) \int_0^t X_{s}^{-(1-\frac{1}{\alpha})} ds\Big]^\frac{1}{2} \nonumber \\
	&\leq  C_T \Big[\frac{1}{\log \delta}\int_u^\infty  z  \nu(dz) \Big]^\frac{1}{2}	n^{-\frac{1}{2\alpha}}. \label{eq42}
	\end{align}
	Therefore by combining \eqref{BzIndi} and \eqref{eq42}, we have for $\alpha_4 \in [\alpha,2]$
\begin{align*}
\E[|K_t^{n,\delta,\varepsilon,3,2}|] & \leq C \Big( \frac{\delta}{\varepsilon\log\delta}\, n^{-\frac{1}{\alpha}} +  \frac{\varepsilon}{\log\delta} \int_0^u z^2 \nu(dz)+ \Big(\frac{1}{\log \delta}\int_u^\infty  z  \nu(dz) \Big)^\frac{1}{2}	n^{-\frac{1}{2\alpha}}\Big)\\
&\quad\leq C \Big( \frac{\delta}{\varepsilon\log\delta}\, n^{-\frac{1}{\alpha}}
		+ \big(I^{\alpha_4}_u+ (J^{\alpha_4}_u)^{\frac{1}{2}}\big) \big( \frac{\varepsilon}{\log\delta} u^{2-\alpha_4}  
		+ \frac{ 1  }{(\log \delta)^\frac{1}{2}} u^{-\frac{\alpha_4-1}{2}}n^{-\frac{1}{2\alpha}}\big)\Big).
\end{align*}
where $I^{\alpha_4}$ and $J^{\alpha_4}$ are given in \eqref{IJ}.	Then, by choosing $u= (n^{-\frac{1}{\alpha}}\, \varepsilon^{-2} \log\delta \,)^{\frac{1}{3-\alpha_4}}$,
	we balance the two quantities related to $u$ to obtain
	\begin{gather}
	\E[|K_t^{n,\delta,\varepsilon,3,2}|] \leq  C \Big( \frac{\delta}{\varepsilon\log\delta}\, n^{-\frac{1}{\alpha}} 
	+\frac{\varepsilon}{\log\delta}\big(n^{-\frac{1}{\alpha}}\, \varepsilon^{-2} \log\delta \, \big)^{\frac{2-\alpha_4}{3-\alpha_4}}\Big). \label{eq40}
	\end{gather}

To estimate the term $K_t^{n,\delta,\varepsilon,3,3}$, we proceed similarly to the proof of Lemma \ref{xPrime} by working separately on the sets $\{|\overline{X}^n_s- X_{\eta(s)}^{n}|>1\}$ and $\{|\overline{X}^n_s- X_{\eta(s)}^{n}|\leq 1\}$. 

To this end, we first work on the set $\{|\overline{X}^n_s- X_{\eta(s)}^{n}|\leq 1\}$ and suppose that $z > 1$. By applying H{\"o}lder's inequality, the first-order Taylor expansion and Young's inequality we obtain
\begin{align*}
	&\int_0^t \int_1^{\infty} G(s,z) \Big\{\phi_{\delta,\varepsilon}'(\overline{Y}^{ n}_{s-}+ \sigma_2^n[ X_{s-}^{\rho}  - ((\overline{X}^n_s)^+)^{\rho}]z ) -\phi_{\delta,\varepsilon}'(\overline{Y}^{ n}_{s-} + \sigma_2^n[X_{s-}^{\rho} - (X_{\eta(s)}^{n})^{\rho} ]z)\Big\} \nu(dz)ds\\
	&\leq C\Big( \int_0^t \int_1^\infty G(s,z)^2  \nu(dz)ds \Big)^\frac{1}{2}  \Big( \int_0^t \int_1^\infty \frac{\delta}{\varepsilon\log\delta}\, z |((\overline{X}^n_s)^+ )^{\rho} - (X_{\eta(s)}^{n})^{\rho}| \nu(dz)ds \Big)^\frac{1}{2}\\
	&\leq C\Big( \frac{\delta}{\varepsilon\log\delta}\int_0^t \int_1^\infty G(s,z)^2  \nu(dz)ds + \int_0^t \int_1^\infty  z |((\overline{X}^n_s)^+ )^{\rho} - (X_{\eta(s)}^{n})^{\rho}| \nu(dz)ds \Big)\\
	&= C\Big( \frac{\delta}{\varepsilon\log\delta}\int_0^t \int_0^\infty  G(s,z)^2 \nu(dz)ds + \int_0^t |\overline{X}^n_s- X_{\eta(s)}^{n}|^{\rho} ds \Big).
\end{align*}
On the other hand, for $z \leq 1$, by the first-order Taylor expansion, property \eqref{YW4} and Young's inequality, we observe that
	\begin{align*}
& \int_0^t \int_0^1 G(s,z)\Big\{\phi_{\delta,\varepsilon}'(\overline{Y}^{ n}_{s-}+ \sigma_2^n[ X_{s-}^{\rho}  - ((\overline{X}^n_s)^+)^{\rho}]z ) -\phi_{\delta,\varepsilon}'(\overline{Y}^{ n}_{s-} + \sigma_2^n[X_{s-}^{\rho} - (X_{\eta(s)}^{n})^{\rho} ]z)\Big\} \nu(dz)ds\\
&\leq \int_0^t \int_0^1 | G(s,z)| \, \frac{2\sigma_2^n \delta}{\varepsilon\log\delta}\, z |((\overline{X}^n_s)^+ )^{\rho}- (X_{\eta(s)}^{n})^{\rho} | \nu(dz)ds\\
&\leq C \int_0^t \int_0^1   \frac{\delta}{\varepsilon\log\delta} \big( G(s,z)^2 +z^2 |((\overline{X}^n_s)^+ )^{\rho}- (X_{\eta(s)}^{n})^{\rho} |^2 \big) \nu(dz)ds\\
&\leq C \Big( \frac{\delta}{\varepsilon\log\delta} \int_0^t \int_0^\infty    G(s,z)^2  \nu(dz)ds  + \frac{\delta}{\varepsilon\log\delta} \int_0^t |\overline{X}^n_s- X_{\eta(s)}^{n}|^{(2\rho )\wedge \alpha_-  } ds  \Big).
\end{align*}
where in the last inequality, the fact that  $|\overline{X}^n_s- X_{\eta(s)}^{n}| \leq 1$ is used to reduce the power of $|\overline{X}^n_s- X_{\eta(s)}^{n}|$ to $\alpha_-$ if $2\rho \geq \alpha$. Finally by taking the expected value of the estimates on $\{z \leq 1\}$ and $\{z > 1\}$, we obtain from Lemma \ref{LocalLocal} and Lemma \ref{martingale:components} that
\begin{gather}	
\E[|K_t^{n,\delta,\varepsilon,3,3}|] \leq C_T\Big( \frac{\delta}{\varepsilon\log\delta} \big( n^{-\frac{1}{\alpha}} + n^{-(\rho\wedge \frac{\alpha_-}{2})} \big) + n^{-\frac{\rho}{2}}\Big).\label{K331}
\end{gather}
\newpage
On the set $\{|\overline{X}^n_s- X_{\eta(s)}^{n}|>1\}$, we again consider separately upper estimates for $z\geq u$ and $z< u$ where we now set $u = |\overline{X}^n_s- X_{\eta(s)}^{n}|^{-\frac{1}{\alpha}} < 1$. For $z \geq u$, by Young's inequality, property \eqref{YW2} and the first-order Taylor expansion we have
\begin{align*}
&\int_0^t \int_u^{\infty} G(s,z) \Big\{\phi_{\delta,\varepsilon}'(\overline{Y}^{ n}_{s-}+ \sigma_2^n[ X_{s-}^{\rho}  - (\overline{X}^n_s)^{\rho}]z ) -\phi_{\delta,\varepsilon}'(\overline{Y}^{ n}_{s-} + \sigma_2^n[X_{s-}^{\rho} - (X_{\eta(s)}^{n})^{\rho} ]z)\Big\} \nu(dz)ds\\
&\leq C\Big( \frac{\delta}{\varepsilon\log\delta}\int_0^t \int_0^\infty G(s,z)^2  \nu(dz)ds + \int_0^t \int_u^\infty  z |(\overline{X}^n_s)^{\rho} - (X_{\eta(s)}^{n})^{\rho}| \nu(dz)ds \Big)\\
&\leq C\Big( \frac{\delta}{\varepsilon\log\delta}\int_0^t \int_0^\infty G(s,z)^2   \nu(dz)ds + \int_0^t  |\overline{X}^n_s - X_{\eta(s)}^{n}|^{\alpha_5\rho} ds \Big).
\end{align*}
where in the last inequality, we used \eqref{IJ}. By Lemma \ref{LocalLocal} and Lemma \ref{martingale:components}, the expected value of the above is bounded by 
\begin{align}
C\frac{\delta}{\varepsilon\log\delta}(n^{-\frac{1}{\alpha}}+ n^{-\frac{\alpha_5\rho}{2}}) \label{K332}
\end{align}
 where $\alpha_5 \in [0,2]\cap (0,\alpha/\rho)$.  Similarly, for $z< u$, we apply the first-order Taylor expansion and Young's inequality to obtain
\begin{align*}
	&\int_0^t \int_0^{1} G(s,z) \Big\{\phi_{\delta,\varepsilon}'(\overline{Y}^{ n}_{s-}+ \sigma_2^n[ X_{s-}^{\rho}  - (\overline{X}^n_s )^{\rho}]z ) -\phi_{\delta,\varepsilon}'(\overline{Y}^{ n}_{s-} + \sigma_2^n[X_{s-}^{\rho} - (X_{\eta(s)}^{n})^{\rho} ]z)\Big\} \nu(dz)ds\\
	&\leq C \int_0^t \int_0^u   \frac{\delta}{\varepsilon\log\delta} \big( G(s,z)^2 +z^2 |(\overline{X}^n_s)^{\rho}- (X_{\eta(s)}^{n})^{\rho} |^2 \big) \nu(dz)ds\\
	&\leq C \Big( \frac{\delta}{\varepsilon\log\delta} \int_0^t \int_0^\infty    G(s,z)^2 \nu(dz)ds  + \frac{\delta}{\varepsilon\log\delta} \int_0^t |(\overline{X}^n_s)- X_{\eta(s)}^{n} |^{\alpha_5\rho}ds  \Big).
\end{align*}
where in the last inequality, we again used \eqref{IJ}. By Lemma \ref{LocalLocal} and Lemma \ref{martingale:components}, the expected value of the above is again bounded by 
\begin{gather}
C\frac{\delta}{\varepsilon\log\delta}(n^{-\frac{1}{\alpha}}+ n^{-\frac{\alpha_5\rho}{2}}) \label{K333}
\end{gather}
 where $\alpha_5 \in [0,2]\cap (0,\alpha/\rho)$. By combining \eqref{K331}, \eqref{K332} and \eqref{K333}, we have for $\alpha_5 \in [0,2]\cap (0,\alpha/\rho)$, 
\begin{gather}
	\E[|K_t^{n,\delta,\varepsilon,3,3}|] \leq C \Big(\frac{\delta}{\varepsilon\log\delta}\,( n^{-\frac{1}{\alpha}} 
	+ n^{-(\rho\wedge \frac{\alpha_-}{2})}+n^{-\frac{\alpha_5\rho}{2}}) + n^{-\frac{\rho}{2}}\Big). \label{eq41}
\end{gather}

\vskip5pt

\noindent {\bf Optimising the convergence rate:}  Now we are ready to combine all the previous calculations to estimate the strong error $\E[|X_t - X_t^{n}|]$. We have from \eqref{eqI}, \eqref{EJ}, \eqref{eq35}, \eqref{eq36}, \eqref{eq37}, \eqref{eq40} and \eqref{eq41}
\begin{align*}
&\E[|X_t - X_t^{n}|] \leq  Cn^{-1} + \varepsilon
		+C_T \Big\{\int_0^t \E[|X_s -  X_{s}^{n}|] ds \  +  n^{-1/2} 
		+\frac{\varepsilon}{\log \delta}\\
&+ \frac{\delta}{\varepsilon \log \delta} ( n^{-1/\alpha}+n^{-\alpha_-/2}) + \varepsilon \log(\delta)^{\alpha -1} \big( I^{\alpha_1}_{\log(\delta)} + J^{\alpha_1}_{\log(\delta)}  \big) 
		+\frac{\delta}{\varepsilon \log \delta}\, n^{-\frac{\alpha_2\rho}{2}}\\
& +\Big(\Big(\frac{\delta}{\log\delta}\Big)^{\alpha_3-1} + 1\Big) n^{-\frac{\rho}{2}}
		+\frac{\delta}{\varepsilon\log\delta} n^{-\frac{1}{\alpha}}
		+\frac{\delta}{\varepsilon\log\delta}\, n^{-\frac{1}{\alpha}}+\frac{\varepsilon}{\log\delta}\big(n^{-\frac{1}{\alpha}}\varepsilon^{-2} \log\delta\big)^{\frac{2-\alpha_4}{3-\alpha_4}}\\
&+\frac{\delta}{\varepsilon\log\delta}\,(n^{-\frac{1}{\alpha}} + n^{-(\rho\wedge \frac{\alpha_-}{2})} +n^{-\frac{\alpha_5\rho}{2}}) + n^{-\frac{\rho}{2}}\Big\}. \nonumber 
\end{align*}
	Then, we can choose $\delta=2$, $\alpha_1=\alpha_3=\alpha$ and apply Gr{\"o}nwall's inequality to obtain
	\begin{align}
& \E[|X_t - X_t^{n}|]	\leq C_T \Big\{  n^{-1}+\varepsilon+ n^{-1/2} 
		+ \varepsilon + \varepsilon^{-1}  \big( n^{-\frac{\alpha_-}{2}} + n^{-\frac{1}{\alpha}}\big)
		+\varepsilon+\varepsilon^{-1} n^{-\frac{\alpha_2\rho}{2}}
		+ n^{-\frac{\rho}{2}} \nonumber \\
	&\qquad + \varepsilon^{-1}n^{-\frac{1}{\alpha}}+\varepsilon^{-1}n^{-\frac{1}{\alpha}}
		+ \varepsilon( \varepsilon^{-2} n^{-\frac{1}{\alpha}})^{\frac{2-\alpha_4}{3-\alpha_4}}
		+ \varepsilon^{-1}( n^{-\frac{1}{\alpha}} + n^{-(\rho\wedge\frac{\alpha_-}{2})} +n^{-\frac{\alpha_5\rho}{2}})+n^{-\frac{\rho}{2}} \Big\} \nonumber\\
	&\leq C_T \Big\{  n^{-1}+ n^{-\frac{1}{2}}   + n^{-\frac{\rho}{2}}  +\varepsilon + \varepsilon^{-1} (n^{-(\rho\wedge\frac{\alpha_-}{2})} + n^{-\frac{1}{\alpha}}
		+ n^{-\frac{\alpha_2\rho}{2}}+n^{-\frac{\alpha_5\rho}{2}}) + \varepsilon (\varepsilon^{-2}n^{-\frac{1}{\alpha}})^{\frac{2-\alpha_4}{3-\alpha_4}} \Big\}.\label{firstRate}
	\end{align}
We point out that, in the above, the rate $1/\alpha$ comes from the term $\overline{M}^n$ in the remainder $R^n$, the rate $\rho/2$ is related to the H\"older exponent of the jump coefficient, the rate $1/2$ is related to the linear drift term and the rate $\alpha_-/2$ essentially comes from maximizing the integrability that is available to us.

	To optimize the convergence rate in \eqref{firstRate}, we must select $\alpha_2$, $\alpha_4$ and $\alpha_5$ within the constraints
	$\alpha_2\in [\alpha,2]\cap (0,\alpha/\rho)$,
$\alpha_4\in [\alpha,2]$ and
	$\alpha_5\in [0,2]\cap (0,\alpha/\rho)$.
	Clearly we want to select $\alpha_2$ and $\alpha_5$ to be as large as possible and since both $\alpha_2$ and $\alpha_5$ has the same constraint for it's upper value, we can combine them into one term and write
\begin{align*}
\E[|X_t-X^n_t|]	&\leq C_T \Big\{  n^{-\frac{\rho}{2}} + \varepsilon + \varepsilon^{-1} (n^{-(\rho\wedge \frac{\alpha_2}{2})} + n^{-\frac{1}{\alpha}})
		+\varepsilon (\varepsilon^{-2}	n^{-\frac{1}{\alpha}})^{\frac{2-\alpha_4}{3-\alpha_4}} \Big\}.
\end{align*}

To this end, to find the optimal choice of $\varepsilon$, we further write the above into
\begin{equation*}
	\E[|X_t - X_t^{n}|]\leq C_T \big( n^{-\zeta_1} + n^{-r} + n^{-(\zeta_2-r)} + n^{-(\zeta_4+r\zeta_3)} \big),	
	\end{equation*}
where we set $\varepsilon=n^{-r}$ and
	\begin{align*}
	&\zeta_1 = \frac{\rho}{2},\quad
	\zeta_2 = \frac{\alpha_-}{2}\wedge \frac{1}{\alpha}\wedge \rho ,\quad
	\zeta_4 = \frac{2-\alpha_4}{\alpha(3-\alpha_4)}, \quad
	\zeta_3 = \frac{(\alpha_4-1)}{(3-\alpha_4)}.
	\end{align*}
	Consider now the term $n^{-(\zeta_4+r\zeta_3)}$.
	In order to maximize $(\zeta_4+r\zeta_3)$, we choose the optimal value of $\alpha_4$ which depends on the value of $r$. To do this, we see that 
\begin{gather*}
(\zeta_4+r\zeta_3) = r \frac{(\alpha_4-1)}{(3-\alpha_4)} + \frac{2-\alpha_4}{\alpha(3-\alpha_4)} = -r + \left(2r-\frac{1}{\alpha}\right) \frac{1}{3-\alpha_4} - \frac{1}{\alpha}
\end{gather*}	
	which is a monotonic function in $\alpha_4 \in [\alpha, 2]$. Then by maximizing the above with respect to $\alpha_4$, we obtain the following piecewise function
	\begin{equation*}
	f(r) =
	\begin{cases}
	\frac{(\alpha-1)}{(3-\alpha)}\, r + \frac{2-\alpha}{\alpha(3-\alpha)}, & r<\frac{1}{2\alpha},\\
	 r, &  r\geq  \frac{1}{2\alpha},
	\end{cases}	
	\end{equation*}	
which is greater than $r$. Therefore the optimal choice of $r$ is
\begin{gather*}
r^* = \underset{r\geq 0}{\arg\max}\,\, \Big(r \wedge (\zeta_2 -r ) \wedge  f(r)\Big) = \underset{r\geq 0}{\arg\max}\,\, \Big(r \wedge (\zeta_2 -r )\Big)
\end{gather*}
which is given by the intercept of $r$ and $\zeta_2 - r$, that is $r^* = \frac{1}{2}\zeta_2 = \frac{1}{2}\left(\frac{\alpha_-}{2}\wedge \rho\wedge \frac{1}{\alpha}\right) \leq \frac{\rho}{2}$. This shows that
\begin{gather*}
	\E[|X_t - X_t^{n}|] \leq C_Tn^{-\frac{1}{2}q(\alpha,\rho)}
\end{gather*}
where $q(\alpha,\rho) = \frac{\alpha_-}{2}\wedge\rho\wedge \frac{1}{\alpha}$ and $\alpha_- < \alpha$ can be chosen arbitrarily close to $\alpha$.
\end{proof}

\begin{cor}\label{t1c}
Suppose Assumption \ref{Assumption:3} holds and $\sigma_2 = 0$, then there exists a constant $C_T>0$ such that 
\begin{gather*}
	\sup_{t\leq T}\E[|X_t - X_t^{n}|] \leq C_Tn^{-\frac{1}{2}}.
\end{gather*}
\end{cor}
\begin{proof}
Again we write $\E[|X_t - \overline X_t^{n}|] + \E[|\overline R^n_t|]$ where $\E[|\overline R^n_t|] \leq C_Tn^{-1}$. The estimate of $I^{n,\delta, \epsilon}$ in \eqref{eqI} remains the same and we need only to modify the estimate in \eqref{EJ}. Since all processes involved are square integrable we can take $\alpha_- = 2$ in \eqref{EJ} and by Lemma \ref{martingale:components} we have $$\E[|\widehat M_t|^2] + \E[|\overline M_t|^2] = \E[\int^t_0 F^2(s) ds] = n^{-1}.$$ By combining these estimates, we obtain from Gr\"onwall's inequality
\begin{align*}
\E[|X_t - X_t^{n}|] \leq C_T \left\{ n^{-1} + \epsilon + n^{-\frac{1}{2}} + \frac{n^{-1}}{\epsilon} \right\}.
\end{align*}
Finally by setting $\epsilon = n^{-r}$ and noticing that the optimal $r$ is $r = 1/2$, we obtain the rate of $1/2$.
\end{proof}

\brem\label{remark1.5}
To conclude, we demonstrate here that, when $\rho = \frac{1}{\alpha}$, the rate obtained in Theorem \ref{Strong:Convergence} is an improvement over the rate obtained for the Euler-Maruyama scheme in \cite{FL}. To do this, let $q(\alpha,\rho) = q(\alpha) := \frac{\alpha_-}{2}\wedge \frac{1}{\alpha}$, we note that for $\alpha \in (1,2)$, 
$$\frac{\alpha_-}{4}\geq \frac{1}{2}q(\alpha) > \frac{(2-\alpha)}{(3-\alpha)}q(\alpha),$$
 and we show in the following that $n^{-\frac{(2-\alpha)}{(3-\alpha)}q(\alpha)}$ is faster than the order of convergence obtained in Theorem 2.11 of \cite{FL} which, in the case of the alpha-CEV, is given by $n^{-\rho/2}+ \varepsilon_n$ where
	\[
	\varepsilon_n =
	\left\{
	\begin{aligned}
	&n^{-\rho/2}+ n^{-(\rho-\frac{\rho}{2\gamma})},
	&
	&\text{ if } \alpha\in[1,\frac{2(1-\gamma)}{1-\rho}] \text{ and } \rho\leq\gamma,\\
	&n^{-(\gamma-\frac{1}{2})},
	&
	&\text{ if } \alpha\in[1,\frac{2(1-\gamma)}{1-\rho}] \text{ and } \rho>\gamma,\\
	&n^{-\rho(1-\frac{1}{2-\alpha(1-\rho)})},
	&
	&\text{ if } \alpha\in (\frac{2(1-\gamma)}{1-\rho},2],
	\end{aligned}
	\right.
	\]
	and $\rho = \frac{1}{\alpha}$. Note that we do not consider the case $\alpha = 1$ or $\alpha=2$ as they are not covered by Theorem \ref{Strong:Convergence}.

In the first case where $\alpha\in (1, 2(1-\gamma)/(1-\rho) ]$ and $\rho\leq\gamma$,
	we first write 
	\begin{equation}\label{FL1}
	\rho-\frac{\rho}{2\gamma}
	=
	\frac{\rho(2\gamma-1)}{(2\gamma-1)+1}.
	\end{equation}
	By assumption we have $2\gamma<\alpha$, from which
	one can deduce that $\rho\leq \gamma < \alpha/2$,
	and hence $\alpha^2>2$.
	On the other hand, we can similarly write
	\begin{equation}\label{CEV1}
	\frac{2-\alpha}{3-\alpha}\frac{1}{\alpha} =
	\frac{\rho(2-\alpha)}{(2-\alpha)+1}.
	\end{equation}
	Then using the condition $\alpha\leq 2(1-\gamma)/(1-\rho)$ we must have $2-\alpha\geq 2\gamma-1$.
	Together with the fact that 
	$x\mapsto \rho x/(x+1)$
	is an increasing function on $\R_+$,
we deduce that \eqref{CEV1} must be greater than \eqref{FL1}. In the second case where $\alpha \in (1, 2(1-\gamma)/(1-\rho) ]$ and $\rho > \gamma$, we deduce that $\alpha^2 \leq 2$ and one can similarly write
\begin{equation*}
	\gamma-\frac{1}{2}=
	\frac{\gamma(2\gamma-1)}{(2\gamma-1)+1} \quad \mathrm{and} \quad 
	\frac{2-\alpha}{3-\alpha}\frac{\alpha_-}{2}=
	\frac{\alpha_-}{2}\frac{(2-\alpha)}{(2-\alpha)+1}.
\end{equation*}
Then by using the fact that $2-\alpha\geq 2\gamma-1$, $\gamma<\rho$ and $\gamma<\frac{\alpha_-}{2}$ we deduce that
	\[\frac{\gamma(2\gamma-1)}{(2\gamma-1)+1}
	\leq \frac{\gamma(2-\alpha)}{(2-\alpha)+1}
	<\frac{\alpha_-}{2}\frac{(2-\alpha)}{(2-\alpha)+1}.\]
	In the third case where $\alpha \in (2(1-\gamma)/(1-\rho), 2)$,
	it is necessary that
	$2(1-\gamma)/(1-\rho)<2$, which implies that $\rho<\gamma$	and $\alpha^2>2$.
	Thus the two convergence rates in this case are equal, that is
	$\frac{2-\alpha}{\alpha(3-\alpha)}=\rho \big(1-\frac{1}{2-\alpha(1-\rho)}\big)$.

\erem

\section{Appendix}\label{CEV:Appendix}
\begin{proof}[{\bf Proof of Lemma \ref{CEV:positivity}}]
For $m \geq 0$ we set $\tau_m:= \inf \{t\geq 0 : X_t \notin(m^{-1},m)\}$. The aim is to show that for any fixed $T>0$, $\lim_{m\rightarrow \infty} \P(\tau_m \leq T) = 0$. To do this, for a fixed $0<\beta<1$, we consider the $C^2(\R_+,\R_+)$ function
$V(x) = x^\beta -1 -\beta\ln x$.
From the It\^o formula applied to $V(X_T^{\tau_m})$, we note that the drift term takes the form
\begin{align}\label{Strict:positivity:drift}
    & V'(x)(a  - k x)+ \frac{1}{2}V''(x)\sigma_1^2 x^{2\gamma} +\int_0^\infty [V(x+\sigma_2x^\rho z)-V(x) -\sigma_2x^\rho zV'(x)]\nu(dz),
\end{align}
where $V'(x) = \beta(x^{\beta -1}-x^{-1})$ and $V''(x)=\beta(\beta-1)x^{\beta-2}+\beta x^{-2}$.
	Thus we see that the terms in \eqref{Strict:positivity:drift} which are
	associated with the diffusion part of $X$,
	are given by
\begin{align*}
a\beta x^{-(1-\beta)}-k\beta x^\beta - a\beta x^{-1} +k\beta - \frac{\sigma_1^2\beta(1-\beta)}{2}x^{\beta-2(1-\gamma)} + \frac{\sigma_1^2\beta}{2} x^{-2(1-\gamma)}.
\end{align*}
From the fact that $0<\beta<1$ and $1/2< \gamma < 1$, we see that for $k >0$ the terms with the smallest and largest power are $-a \beta x^{-1}$ and $-k\beta x^\beta$ respectively, both of which have negative constant coefficients.

On the other hand, for the integral against the L{\'e}vy  measure we can apply Taylor's expansion to obtain, for $(x, y, z) \in \mathbb{R}_+^3$, the following useful equalities
\begin{align}
&V(x+\sigma_2x^\rho z)-V(x) -\sigma_2x^\rho zV'(x)=\sigma_2^2x^{2\rho} z^2\int_0^1 (1-\theta)V''(x+\theta\sigma_2x^\rho z)d\theta, \label{1stTaylor}\\
&V(x+\sigma_2x^\rho z)-V(x) -\sigma_2x^\rho zV'(x)=\sigma_2 x^\rho z\int_0^1 [V'(x+\theta\sigma_2x^\rho z)-V'(x)]d\theta. \label{2ndTaylor}
\end{align}
For $0< z < 1$, by using \eqref{1stTaylor} we obtain
\begin{align*}
&\int_0^1 [V(x+\sigma_2x^\rho z)-V(x) -\sigma_2x^\rho zV'(x)]\nu(dz)\\
&=\sigma_2^2 x^{2\rho}\int_0^1 z^2\int_0^1 (1-\theta)\, 
[\beta (\beta-1)(x+\theta\sigma_2x^\rho z)^{\beta-2}+\beta (x+\theta\sigma_2x^\rho z)^{-2}] \,
d\theta\, \nu(dz)\\
&\leq\sigma_2^2x^{2\rho}\int_0^1 z^2\int_0^1 (1-\theta)\, 
\beta\, x^{-2} \,
d\theta\, \nu(dz) = \frac{\beta\sigma_2^2}{2(2-\alpha)}\, x^{2\rho-2}.
\end{align*}
For $z\geq 1$, by using \eqref{2ndTaylor} we obtain
\begin{align*}
&\int_1^\infty [V(x+\sigma_2x^\rho z)-V(x) -\sigma_2x^\rho zV'(x)]\nu(dz)\\
&= \beta \sigma_2 x^\rho\int_1^\infty z\int_0^1  [(x+\theta\sigma_2x^\rho z)^{\beta-1}- (x+\theta\sigma_2x^\rho z)^{-1} - x^{\beta-1}+x^{-1}]\, d\theta\, \nu(dz)\\
&\leq \beta \sigma_2 x^\rho \int_1^\infty z\int_0^1  [x^{\beta-1}+x^{-1}]\, d\theta\, \nu(dz) = \frac{\beta\sigma_2}{\alpha-1}\,  (x^{\rho +\beta -1}+x^{\rho-1}).
\end{align*}
By combining the above computations, we observe that
\begin{align}
& V'(x)(a  - k x)+ \frac{1}{2}V''(x)\sigma_1^2 x^{2\gamma} + \int_0^\infty [V(x+\sigma_2x^\frac{1}{\alpha}z)-V(x) -\sigma_2x^\frac{1}{\alpha}zV'(x)]\nu(dz)\nonumber \\
& \leq \big(  a\beta x^{-(1-\beta)} -k\beta x^\beta -a\beta x^{-1} + k\beta - \frac{\sigma_1^2\beta(1-\beta)}{2}x^{\beta-2(1-\gamma)} + \frac{\sigma_1^2\beta}{2} x^{-2(1-\gamma)} \big) \nonumber \\
& \quad + \frac{\beta\sigma_2^2}{2(2-\alpha)}\, x^{-2(1-\rho)} + \frac{\beta\sigma_2}{\alpha-1}\, (x^{\beta - (1-\rho)}+x^{-(1-\rho)}). \label{Vdrift}
\end{align}
Using the fact that $1<\alpha<2$ and $\rho \in (1/2,1)$, we see that the terms with the smallest and largest power in \eqref{Vdrift} are still $-a\beta x^{-1}$ and $-k\beta x^\beta$ respectively, and both have negative constant coefficients. Therefore, the right hand side of \eqref{Vdrift} is a continuous function which is bounded from above. That is we can find a constant $C>0$ such that for all $0<x<\infty$
\begin{displaymath}
V'(x)(a  - k x)+ \frac{1}{2}V''(x)\sigma_1^2 x^{2\gamma} + \int_0^\infty [V(x+\sigma_2x^\rho z)-V(x) -\sigma_2x^\rho zV'(x)]\nu(dz) \leq C.
\end{displaymath}
Then by taking the expectation we obtain for every $m\geq 0$, $\E[V(X^{\tau_m}_T)] \leq V(x_0) + CT.$

Finally, since the function $V$ is
	strictly decreasing on $(0,1)$, strictly increasing on $(1,\infty)$, we can have for example that $V(m) \leq \E[V(X^{\tau_m}_T) | \tau_m\leq T, X_{\tau_m}\geq m]$, and conclude that
\begin{align*}
\P(\tau_m\leq T)\,[V(m^{-1})\wedge V(m)] & \leq \P(\tau_m\leq T, X_{\tau_m}\leq m^{-1}) \E[V(X^{\tau_m}_T) | \tau_m\leq T, X_{\tau_m}\leq m^{-1}]\\
&\qquad + \P(\tau_m\leq T, X_{\tau_m}\geq m) \E[V(X^{\tau_m}_T) | \tau_m\leq T, X_{\tau_m}\geq m]\\
&=\P(\tau_m\leq T) \E[V(X^{\tau_m}_T) | \tau_m\leq T] \\
&  \leq \E[V(X^{\tau_m}_T)].
\end{align*}
Hence, for any $m \geq 0$, we have the following estimates
\[\P(\tau_m\leq T)\,[V(m^{-1})\wedge V(m)]\leq \E[V(X^{\tau_m}_T)] \leq V(x_0) + CT.\]
Finally, the function $V(x)$ diverges to infinity as $x \rightarrow 0$ or $x \rightarrow \infty$. Therefore, by taking the limit as $m \rightarrow \infty$, one can conclude that for any $T > 0$, $\lim_{m\rightarrow\infty}\P(\tau_m\leq T)=0$.
\end{proof}

\begin{proof}[{\bf Proof of Lemma \ref{CEV:inverse:moments}}]
	In the following, by extending the argument in Lemma 4.1 of Bossy and Diop \cite{BossyDiop}, we apply the It\^o formula to estimate the inverse moments of $X$.	Let us consider a sequence of stopping times $(\tau_m)_{m\in\mathbb{N}^+}$ given by $\tau_m = \inf\{s\leq T: X_s\leq m^{-1}\}$.
	Then by applying the It{\^o} formula, see for example Theorem 4.4.7 in \cite{Applebaum}, to the stopped process $(X^{\tau_m})^{-p}$, we obtain
	\begin{equation*}
	(X_{t\wedge \tau_m})^{-p} = x_0^{-p} + M_{t\wedge \tau_m}^i +  I_{t\wedge \tau_m}^i+  J_{t\wedge \tau_m}^i +  K_{t\wedge \tau_m}^i,
	\end{equation*}
	where we set
\begin{align*}
M_{t}^i &:= - \sigma_1 p\int_0^t X_s^{\gamma-p-1}dW_s
+ \int_0^t\int_0^{\infty} \{ (X_{s-}+\sigma_2(X_{s-})^{\frac{1}{\alpha}}z)^{-p} - (X_{s-})^{-p} \} \widetilde{N}(ds,dz),\\
I_{t}^i &:= -p\int_0^t X_s^{-p-1} (a-kX_s)ds,\\
J_{t}^i &:= \frac{\sigma_1^2}{2}p(p+1)\int_0^t X_s^{2\gamma-p-2}ds,\\
	K_{t}^i &:= \int_0^t\int_0^{\infty} \{ (X_{s-}+\sigma_2(X_{s-})^{\rho}z)^{-p} - (X_{s-})^{-p} 
		+ \sigma_2(X_{s-})^{\rho}z p (X_{s-})^{-p-1}\} \nu(dz)ds.
	\end{align*}

We consider now the term $K_{t\wedge \tau_m}^i$.
	For $z\in (0,1)$ we deduce, from the first-order Taylor expansion of the function $\theta \mapsto (y+\theta xz)^{-p}$, the following inequality for every positive $x,y$ and $z$,
	\begin{align*}
&(y+ xz)^{-p} - y^{-p} + pxzy^{-p-1}\\
&=
p(p+1)(xz)^2 \int_0^1 (y+\theta xz)^{-p-2} (1-\theta) d\theta\leq
\frac{p(p+1)(xz)^2}{y^{p+2}}.
\end{align*}
For $z\geq 1$, since $x, y , z$ are positive we have $(y+ xz) \geq y$ and
\[
	(y+ xz)^{-p} - y^{-p} + pxzy^{-p-1}
	\leq
	pxzy^{-p-1}.
\]
	Hence we have the estimate
	\begin{align*}
		\E [K_{t\wedge \tau_m}^i] 
		& \leq
		\sigma_2^2 p(p+1)\E\big[\int_0^{t\wedge \tau_m}\int_0^{1} X_s^{2\rho-p-2}z^2 \nu(dz)ds\big] 
		+ p\sigma_2 \E\big[\int_0^{t\wedge \tau_m}\int_1^{\infty}
		+ 	X_s^{\rho-p-1}z \nu(dz)ds\big]\\
	    &=  \frac{\sigma_2^2 p(p+1)}{2-\alpha}  \E\big[\int_0^{t\wedge \tau_m} X_s^{2\rho-p-2}ds\big]
		+  \frac{ p\sigma_2}{\alpha-1} \E\big[\int_0^{t\wedge \tau_m} X_s^{\rho-p-1} ds\big]
.
	\end{align*}
	Similarly for $I_{t\wedge \tau_m}^i$ and $J_{t\wedge \tau_m}^i$ we have
\begin{align*}
\E[I_{t\wedge \tau_m}^i] &=
- p \E\big[\int_0^{t\wedge \tau_m} X_s^{-p-1} (a-kX_s)ds\big] \\
&= - ap \E\big[\int_0^{t\wedge \tau_m} X_s^{-p-1} ds \big] +pk \E\big[\int_0^{t\wedge \tau_m} X_s^{-p} ds\big], \\
\E[J_{t\wedge \tau_m}^i] &=
\frac{\sigma_1^2}{2}p(p+1) \E \big[ \int_0^{t\wedge \tau_m} X_s^{2\gamma-p-2}ds\big].
\end{align*}
Thus by combining the above estimates, we obtain
\begin{align}
\E[(X_{t\wedge \tau_m})^{-p}]
& =x_0^{-p}  +  \E[I_{t\wedge \tau_m}^i] +  \E[J_{t\wedge \tau_m}^i] +  \E[ K_{t\wedge \tau_m}^i] \nonumber\\
& \leq
x_0^{-p}
+ pk \E\big[\int_0^{t\wedge \tau_m} X_s^{-p} ds \big]+ \E \Big[ \int_0^{t\wedge \tau_m} \big(
-ap X_s^{-p-1} + \frac{\sigma_1^2}{2}p(p+1)  X_s^{2\gamma-p-2}\nonumber \\
&\qquad\qquad +\frac{\sigma_2^2 p(p+1)}{2-\alpha} X_s^{2\rho-p-2}
+ \frac{p\sigma_2}{\alpha-1}  X_s^{\rho-p-1} 
\big) ds \Big]. \label{inverseComb}
\end{align}

From the fact that $\gamma>\frac{1}{2}$ and $\rho \in (1/2,1)$, we observe, on the right hand side of \eqref{inverseComb}, that $(-p-1)$ is the smallest among the negative powers given by $(-p-1)$, $(2\gamma-p-2)$, $(2\rho -p-2)$ and $(\rho-p-1)$.
Also, the term $X_s^{-p-1}$ is the only term with a negative coefficient.	Thus,
for $f:(0,\infty)\mapsto \R$
defined by
\[
	f(x):=
		-ap x^{-p-1} + \frac{\sigma_1^2p(p+1)}{2}  x^{2\gamma-p-2}
		+ \frac{\sigma_2^2 p(p+1)}{2-\alpha} x^{2\rho-p-2}
		+ \frac{p\sigma_2}{\alpha-1}  x^{\rho-p-1},
\]
there exists a positive constant $C$ such that $f(x)\leq C$ for all $x>0$.
Therefore we obtain $\E[(X_{t\wedge \tau_m})^{-p}] \leq x_0^{-p}+ CT+ pk\int_0^{T} \E [(X_{s\wedge \tau_m})^{-p}] ds$,
and finally Gr\"onwall's inequality gives us 
	\begin{equation}\label{inverBound}
	\E[(X_{t\wedge \tau_m})^{-p}] \leq
	(x_0^{-p}+ CT) \exp(pkT).
	\end{equation}


\noindent 	Lastly,
	since the constant $C$ is independent of $m$,
	the upper bound \eqref{inverBound} for $\E[(X_{t\wedge \tau_m})^{-p}]$ is independent of the localising sequence $(\tau_m)_{m\in\mathbb{N}^+}$ and by strict positivity of the processes, given in Lemma \ref{CEV:positivity}, we can conclude with the monotone convergence theorem and letting $m\rightarrow \infty$.
\end{proof}

\vskip2pt
\begin{proof}[{\bf Proof of Lemma \ref{finiteE}}]
		By taking expectation of the scheme in \eqref{Scheme:expanded} and removing the terms with negative coefficients we obtain that for some $C>0$
		\begin{align*}
		\mathbb{E}[X_{t_{i+1}}^{n}]
		&=
		\mathbb{E}[X_{t_{i}}^{n}] + (a-k_n\mathbb{E}[X_{t_{i}}^{n}])\Delta t + \mathbb{E}[A^n_{t_i}]\Delta t\\
  &\leq C\Delta t+ (1+C\Delta t)\mathbb{E}[X_{t_{i}}^{n}] + \frac{1}{4\kappa_0^2}\mathbb{E}[D^-_{t_{i}}].
		\end{align*}
Next, for the term $D_{t_{i}}^- = \max(-D_{t_{i}}, 0)$, we first observe that 
\begin{align*}
		D_{t_{i}}^{-}
	&=
		-\left(\sigma_1^2(X^{n}_{t_i})^{2\gamma-1} (\Delta W_{t_i})^2  +
		4(1+k\Delta t)(		X^{n}_{t_i} 
		+ \big(a - \frac{\sigma_1^2}{2}(X^{n}_{t_i})^{2\gamma-1}\big)\Delta t
		+ \sigma_2  ( X^{n}_{t_i})^{\rho}\Delta Z_{t_i})\right)\I_{\{D_{t_{i}}<0\}}\\
&\leq
			C\big(\frac{\sigma_1^2}{2} (X_{t_i}^{n})^{2\gamma-1}\Delta t-\sigma_2 (X_{t_i}^{n})^{\rho}\Delta Z_{t_i}\big)\I_{\{D_{t_{i}}<0\}}.
\end{align*}

		Next we take the expectation of the above inequality and apply H{\"o}lder's inequality. To do this,
		we choose constants $p,q>1$ such that $1/p + 1/q = 1$,
		with the choice of $q$ small enough that $\rho q <1$ and $(2\gamma-1)q<1$.
		Then we have
		\begin{align*}
			\E[D_{t_{i}}^-] 
		&\leq
		C\sigma_1^2\Delta t\,
		\E[(X_{t_i}^{n})^{(2\gamma-1)q}]^\frac{1}{q}
		\P(D_{t_{i}}<0)^\frac{1}{p}
		+ C\sigma_2\E[(X_{t_i}^{n})^{\rho q}|\Delta Z_{t_i}|^q]^\frac{1}{q}
		\P(D_{t_{i}}<0)^\frac{1}{p}\\
		&\leq 
					C\sigma_1^2\Delta t\,
		\E[X_{t_i}^{n}]^{2\gamma-1}
		\P(D_{t_{i}}<0)^\frac{1}{p}
		+C\sigma_2\E[X_{t_i}^{n}]^\rho \E[|\Delta Z_{t_i}|^q]^\frac{1}{q}
		\P(D_{t_{i}}<0)^\frac{1}{p},
		\end{align*}
		where we have also used independent increments and Jensen's inequality since $\rho q<1$ and $(2\gamma-1)q<1$.
		Then by Lemma \ref{probability:D:negative} and the inequality $e^{-x} \leq m! x^{-m}$ for $m \in \mathbb{N}_+$ we have
		\begin{align}
			\E[D_{t_{i}}^-] 
		&\leq 
			\big(
			C
		\sigma_1^2 \E[X_{t_i}^{n}]^{2\gamma-1}\Delta t
		+C\sigma_2\E[X_{t_i}^{n}]^\rho (\Delta t)^\frac{1}{\alpha}
		\big)
		\P(D_{t_{i}}<0)^\frac{1}{p} \label{ED}\\
		&\leq
		\big( C
		\sigma_1^2 \E[X_{t_i}^{n}]^{2\gamma-1}\nonumber
		+C\sigma_2\E[X_{t_i}^{n}]^\rho \big)\Delta t\\
		&\leq C(1+ \E[X_{t_i}^{n}])\Delta t \nonumber
		\end{align}
for some $C>0$,
		 and the recursive equation $1+\mathbb{E}[X_{t_{i+1}}^{n}]
		 \leq
		 (1+C'' \Delta t)(1+\mathbb{E} [X_{t_i}^{n}])$
		 for some $C'' >0$.
		 This gives $1+\mathbb{E}[X_{t_{i}}^{n}]\leq (1+x)(1+C'' \Delta t)^n
		 \leq (1+x)e^{C''T}$.
\end{proof}

\begin{proof}[{\bf Proof of Lemma \ref{2.3}}]
		For the first moment, from \eqref{ED} we see that there exists constants $C>0$ and $p>1$ such that
		\[\mathbb{E}[D_{t_{i}}^-]\leq
		C\mathbb{P}(D_{t_{i}}<0)^{\frac{1}{p}}(1+\mathbb{E}[X_{t_i}^{n}])
		\Delta t^{\frac{1}{\alpha}}.
		 \]		
		For $\beta >1$,
		similar to the proof of Lemma \ref{finiteE}, we again observe that
		\begin{align*}
			D_{t_{i}}^- \leq
			C\big(\frac{\sigma_1^2}{2} (X_{t_i}^{n})^{2\gamma-1}\Delta t-\sigma_2 (X_{t_i}^{n})^{\rho}\Delta Z_{t_i}\big)\I_{\{D_{t_{i}}<0\}}.
		\end{align*}
		Then by choosing a constant $q>1$ small enough so that $(2\gamma-1)q<1$ and $\beta q<\alpha$ and $\rho q < 1$,
		we can apply H{\"o}lder's inequality with $1/q+1/p=1$ to obtain
\begin{align*}
\mathbb{E}[(D_{t_{i}}^-)^\beta]
	&\leq	C \E \big[\big((X_{t_i}^{n})^{(2\gamma-1)\beta}\Delta t^\beta + (X_{t_i}^{n})^{\rho\beta}(\Delta Z_{t_i})^\beta\big)\I_{\{D_{t_{i}}<0\}}\big]\\
	&\leq 	C \P(D_{t_{i}}<0)^{\frac{1}{p}} \Big( \E[(X_{t_i}^{n})^{(2\gamma-1)\beta q}]^\frac{1}{q}\Delta t ^\beta
		+\mathbb{E} [(X_{t_i}^{n})^{\rho\beta q}]^{\frac{1}{q}}
			\mathbb{E} [|\Delta Z_{t_i}|^{\beta q}]^{\frac{1}{q}}
		\Big)\\
	&\leq 	C\P(D_{t_{i}}<0)^{\frac{1}{p}}\Big( \E[(X_{t_i}^{n})^{(2\gamma-1)\beta q}]^\frac{1}{q}\Delta t ^\beta
		+\mathbb{E} [(X_{t_i}^{n})^{\rho\beta q}]^{\frac{1}{q}}
			\Delta t^\frac{\beta}{\alpha}\Big).
\end{align*}
		Given the choice of $q$ here we can apply growth condition of order $\beta$ to obtain
		\begin{align*}
		\mathbb{E}[|D_{t_{i}}^-|^\beta]
		&\leq
			C\P(D_{t_{i}}<0)^{\frac{1}{p}}
		(1+\E[(X_{t_i}^{n})^{\beta}])(\Delta t ^\beta
		+
			\Delta t^\frac{\beta}{\alpha}) = C\P(D_{t_{i}}<0)^{\frac{1}{p}}
		(1+\E[(X_{t_i}^{n})^{\beta}])	\Delta t^\frac{\beta}{\alpha}
		\end{align*}
which concludes the proof.
	\end{proof}

\begin{proof}[{\bf Proof of Lemma \ref{probability:D:negative}}]
In the following, to emphasis the dependence of $D_{t_{i}}$ on $X^n_{t_i}$ we write $D_{t_{i}} = D_{t_{i}}(X_{t_i})$. By independent increment we have
\begin{align*}
\P(D_{t_{i}}(X^{n}_{t_i}) < 0 \,|\,\F_{t_i}) = \P(D_{t_{i}}(x) < 0 ) \big|_{x= X^{n}_{t_i}}.
\end{align*}
To proceed, for $x \geq 0$,	we observe that
\begin{equation}\label{set:W:Z:jump}
\{ D_{t_{i}}(x) < 0\} = \Big\{\sigma_1^2x^{2\gamma-1} (\Delta W_{t_i})^2 + 4(1+k\Delta t)(x + (a - \frac{\sigma_1^2}{2}x^{2\gamma-1})\Delta t + \sigma_2x^\rho\Delta Z_{t_i}) < 0\Big\},
\end{equation}
and we need only to consider the case $x>0$ since the above set is empty when $x = 0$. Using the fact that the increments $\Delta W_{t_i}$ and $\Delta Z_{t_i}$ are independent and the scaling property of the Brownian motion and the stable process we can work with independent variables $Y\sim\chi^2_1$ and $Z\sim\mathrm{Stable}(\alpha,1,0, 1)$. We introduce the following set $A$ which is equal in probability to the set given in \eqref{set:W:Z:jump} 
\begin{align*}
		A &= \Big\{\Delta t Y + \frac{4}{\sigma_1^2}(1+k\Delta t)\big(x^{2-2\gamma}
		+ (\frac{a}{x^{2\gamma-1}}  - \frac{\sigma_1^2}{2})\Delta t + \sigma_2x^{1+\rho-2\gamma}\Delta t^\frac{1}{\alpha} Z\big) < 0 \Big\}\\
		& =  \Big\{Y < 2(1+k\Delta t)\Big[1 - \frac{2}{\sigma^2_1}\big(\frac{x^{2-2\gamma}}{\Delta t }
		+ \frac{a}{x^{2\gamma-1}}+ \frac{\sigma_2x^{1+\rho-2\gamma}}{\Delta t}\Delta t^\frac{1}{\alpha} Z \big)\Big]\Big\}.
\end{align*}
Since the chi-square variable $Y$ is non-negative, the set above is non-empty only if the right hand side in the above is positive. In other words,
\begin{align}
A \subseteq \Big\{\sigma_2Z <
	\frac{\sigma_1^2}{2}\frac{\Delta t^{1-\frac{1}{\alpha}}}{x^{1+\rho-2\gamma}} \Big(1 - \frac{2}{\sigma^2_1} \Big( \frac{x^{2-2\gamma}}{\Delta t }+ \frac{a}{x^{2\gamma-1}}  \Big)\Big) \Big\}. \label{setA}
\end{align}
To proceed, we consider $f:\R^+\mapsto\R$ given by
\[
f(x):=\frac{\sigma_1^2}{2}\frac{\Delta t^{1-\frac{1}{\alpha}}}{x^{1+\rho-2\gamma}} \left(1 - \frac{2}{\sigma^2_1} \left( \frac{x^{2-2\gamma}}{\Delta t }+ \frac{a}{x^{2\gamma-1}}  \right)\right).
\]
The goal now is to find an upper bound $K$, which depend on $\Delta t$, for the function $f$ so that we have $\{ Z < f(x) \} \subseteq \{ Z < K\}$. To do that we seek to first maximise $f$ on the set $\{x \geq \Delta t\}$ and then on $\{x < \Delta t\}$.

Firstly, on the set $\{x \geq \Delta t\}$, we rewrite $f$ into
\begin{align*}
f(x) 
& = \frac{\sigma_1^2}{2}\Delta t^{-\frac{1}{\alpha}} x^{1-\rho} F(x)
\end{align*}
where $F(x) = \left(x^{2\gamma-2}\Delta t- \frac{2}{\sigma^2_1} \left( 1 + \frac{a}{x}\Delta t  \right)\right)$. The function $x\mapsto F(x)$ achieve its unique maximum at the point $x^* = \big[\frac{2a }{\sigma^2_1(2-2\gamma)}\big]^\frac{1}{2\gamma-1}$ and by substituting it back into $F$ we see that
	\[
	F(x^*)= \Delta t\,\frac{2a}{\sigma_1^2}\,
	\frac{2\gamma-1}{2-2\gamma}
	\Big[\frac{2a }{\sigma^2_1(2-2\gamma)}\Big]^{-\frac{1}{2\gamma-1}}\, -\frac{2}{\sigma_1^2}.
	\]
	The above shows that it is possible to pick $\Delta t$ is small enough such that $F(x^*) < -c < 0$ for some $c>0$. Thus we have the upper bound
\begin{align}
f(x)\leq -\frac{\sigma_1^2 }{2} \Delta t^{-\frac{1}{\alpha}} x^{1-\rho} \, c
\leq -c\, \frac{\sigma_1^2}{2}\Delta t^{1-\frac{1}{\alpha}-\rho}. \label{boundZ}
\end{align}
where in the last inequality we have used the fact that $x^{1-\rho}\geq \Delta t^{1-\rho}$. We note that $1-\frac{1}{\alpha} - \rho < 0$ since it was assumed that $\alpha \in (1,2)$ and $\rho \in (1-\frac{1}{\alpha},1)$.

Secondly, on the set $\{x < \Delta t\}$, we rewrite $f$ into
\begin{align*}
f(x)  = \frac{\sigma_1^2}{2}\Delta t^{1-\frac{1}{\alpha}} x^{-\rho}H(x) 
\end{align*}
where
$H(x) := (\Delta t)^{-1}xF(x) = \left(  x^{2\gamma-1}- \frac{2}{\sigma^2_1} \frac{x}{\Delta t }  - \frac{2a}{\sigma^2_1} \right)$.
The function $x\mapsto H(x)$ achieves its unique maximum $x^*$ given by
$x^* = \big[\Delta t \frac{\sigma^2_1(2\gamma-1)}{2}\big]^\frac{1}{2-2\gamma}$ which when substituted back into $H$ gives
\begin{align*}
H(x^*)
	=\Delta t^\frac{2\gamma-1}{2-2\gamma}\frac{4(1-\gamma)}{\sigma_1^2(2\gamma-1)}\Big[\frac{\sigma_1^2(2\gamma-1)}{2}\Big]^\frac{1}{2-2\gamma} -\frac{2a}{\sigma_1^2}.
\end{align*}
Again $\exists c>0$ such that $H(x^*)<-c<0$ when $\Delta t$ is small enough and by using $x^{-\rho} > \Delta t^{-\rho}$, we have 
\begin{align}
f(x) \leq \frac{\sigma_1^2}{2}H(x^*) \Delta t^{1-\frac{1}{\alpha}} x^{-\rho} \leq  -c\frac{\sigma_1^2}{2} \Delta t^{1-\frac{1}{\alpha}-\rho}.\label{boundZZ}
\end{align}
By combining \eqref{boundZ} and \eqref{boundZZ}, we see that there exists some constant $C>0$ such that
\begin{align*}
f(x) \leq  - C\Delta t^{1-\frac{1}{\alpha}-\rho}.
\end{align*}
From this we deduce that 
\begin{align*}
 \P(D_{t_{i}}(x) < 0 )\,\big|_{x = X^{n}_{t_i}}\   \leq\  \P(\sigma_2 Z < f(x))\big|_{x = X^{n}_{t_i}}\   \leq\  \P\big(Z <- C\Delta t^{1-\frac{1}{\alpha}-\rho}\big).
\end{align*}

Finally, by using the inequality $\I_{\{x < K\}} \leq e^{-(x-K)}$ we deduce that
	\[\P(D_{t_{i}} < 0 \,|\,\F_{t_i}) 
	\leq 
	\mathbb{E}[e^{-Z}]\exp(-C\Delta t^{1-\frac{1}{\alpha}-\rho}),\]
	where the quantity $\mathbb{E}[e^{-Z}]$ is finite since it is the Laplace transform of a stable random variable and is given in Lemma \ref{prop:Laplace:alpha:stable}.

In the case $\sigma_2= 0$, let us consider again \eqref{setA} and observe that 
\begin{align*}
\Big\{\sigma_2Z <
	\frac{\sigma_1^2}{2}\frac{\Delta t^{1-\frac{1}{\alpha}}}{x^{1+\rho-2\gamma}} \Big(1 - \frac{2}{\sigma^2_1} \Big( \frac{x^{2-2\gamma}}{\Delta t }+ \frac{a}{x^{2\gamma-1}}  \Big)\Big) \Big\} = \Big\{0 <
	\frac{\sigma_1^2}{2}\frac{\Delta t^{1-\frac{1}{\alpha}}}{x^{1+\rho-2\gamma}} \Big(1 - \frac{2}{\sigma^2_1} g(x)\Big) \Big\}
\end{align*}
where the function $g:\R^+\mapsto\R$ given by $g(x):=x^{2-2\gamma}/\Delta t + a/ x^{2\gamma-1}$. The unique minimiser $x^*$ of $g$ is given by $x^*= \frac{a(2\gamma-1)}{(2-2\gamma)} \Delta t =: z\Delta t$ and the minimum of $g$ is given by $g(x^*)   = (  z^{2-2\gamma} + az^{1-2\gamma}) (\Delta t)^{1-2\gamma} =: C(\Delta t)^{1-2\gamma}$.
From this, we deduce that
\begin{align*}
\Big\{0 <
	\frac{\sigma_1^2\Delta t^{1-\frac{1}{\alpha}}}{2x^{1+\rho-2\gamma}} \big(1 - \frac{2}{\sigma^2_1} g(x)\big) \Big\}\subseteq \Big\{0 <
	\frac{\sigma_1^2\Delta t^{1-\frac{1}{\alpha}}}{2x^{1+\rho-2\gamma}} \big(1 - \frac{2}{\sigma^2_1} C(\Delta t)^{1-2\gamma}\big) \Big\}.
\end{align*}
Therefore by having $\Delta t \leq (2C/\sigma_1^2)^\frac{1}{2\gamma-1}$ we see that the set in the right-hand-side above is the empty set and $\P(D_{t_i}(X_{t_i}^{n})<0\,|\,\F_{t_i}) = 0$.
\end{proof}

\begin{proof}[{\bf Proof of Lemma \ref{scheme:beta:moments}}]
To proceed, we decompose the L\'evy process $Z$ into small jumps and large jumps, by size one,
and express the scheme as
\begin{align*}
			X_{t_{i+1}}^n
		&=
			x_0
			+ \int_0^{t_{i+1}} (a-k_n X_{\eta(s)}^{n} - \frac{\sigma_2}{\alpha-1}  ( X_{\eta(s)}^{n})^{\rho} )ds
			+ \int^{t_{i+1}}_0 \sigma_1 (X_{\eta(s)}^{n})^\gamma dW_{s} \nonumber\\
	&\hspace{0.56cm}+ 
	\widetilde{M}^n_{t_{i+1}}
	+ \widehat{M}^n_{t_{i+1}}
	+ \overline{M}^n_{t_{i+1}}
			+ \int^{t_{i+1}}_0 A_{\eta(s)}^n ds 
			\nonumber\\
		&\hspace{0.56cm}
			+ \int_0^{t_{i+1}}\int_0^1 \sigma_2( X_{\eta(s)}^{n})^{\rho} z \widetilde{N}(dz,ds)
			+ \int_0^{t_{i+1}}\int_1^\infty \sigma_2( X_{\eta(s)}^{n})^{\rho} z {N}(dz,ds).
	\end{align*}
For notational simplicity, in the following we set
$N^n
	:= \widetilde{M}^n
	+ \widehat{M}^n
	+ \overline{M}^n$
	and $b(x) := a-k_n x - \frac{\sigma_2}{\alpha-1}  x^{\rho}$.
	Also we set $V_{t} := \int_0^{t}\int_1^\infty \sigma_2( X_{\eta(s)}^{n})^{\rho} z {N}(dz,ds) $,
	and $I_j:=\{0, 1,...,j\}$ for $j = 0,1,\dots, n$. 

	The goal is to localise the discretisation scheme and apply Gr\"onwall's inequality.
	Before proceeding further,
	we note that
	for any continuous-time process $Y$
	observed on the time grid $\pi$,
	when $Y$ is stopped at a discrete random time $\tau$ which takes value on the time grid $\pi$, we have for point $t_i$ that
	\[
	Y^\tau_{t_i} - Y_0 = \sum_{j=0}^{i-1} \I_{\{t_{j}<\tau\}}\Delta Y_{t_j} 
	= \sum_{j=0}^{n-1} \I_{\{t_{j}<t_i\wedge \tau\}}\Delta Y_{t_j} 
	= \int^T_0 \I_{\{s \leq \tau\}}\I_{\{s \leq t_i\}}  dY_s,
	\]
where by convention $\sum_{j=0}^{-1}=  0$.
In addition, we observe that $Y_{t_i} = Y_{t_i}^{\tau}$ on the set $\{t_{i} \leq \tau\}$ and $\{t_{i} <  \tau\} = \{t_{i} \leq   \eta(\tau)\}$ which implies that $Y_{t_i} = Y^{\eta(\tau)}_{t_i}$ on the set $\{t_{i} <  \tau\}$. 
	
For fixed $N \in \mathbb{N}$ define $\tau_N := \min\{t_i \leq T: X^n_{t_i} \geq N\}$
and we consider the localising sequence $(\tau_N)_{N\in\mathbb{N}}$.
Then from Jensen's inequality, we have for $\beta \in (1,\alpha)$
\begin{align}
& \max_{i\in I_n} ( X^{n}_{t_i\wedge \eta(\tau_N)	} )^\beta 
\leq		\max_{i\in I_n} ( X^{n}_{t_i\wedge \tau_N} )^\beta \nonumber \\
&\leq C_T \Big[ x_0^\beta
+ \max_{i\in I_n}  | \sum_{j=0}^{i-1} \I_{\{t_j< \tau_N\}} b(X_{t_j}^n)\Delta t |^{\beta}
+ \max_{i\in I_n} |\sum_{j=0}^{i-1} \I_{\{t_{j}<\tau_N\}} A_{t_j}^n \Delta {t}|^{\beta}
\label{ee1}\\
&\ + \max_{i\in I_n}|\sum_{j=0}^{i-1} \I_{\{t_{j}<\tau_N\}} \sigma_1 (X_{t_j}^{n})^\gamma \Delta W_{t_j}|^\beta 
+  \max_{i\in I_n} |\sum_{j=0}^{i-1} \int^1_0 \I_{\{t_{j}<\tau_N\}}  \sigma_2(X_{t_j}^n)^{\rho}z  \widetilde{N}(dz, \Delta t) |^\beta
\label{ee2}\\
&\ + \max_{i\in I_n} |V_{t_i\wedge \tau_N}|^\beta
+ \max_{i\in I_n} |N_{t_i\wedge \tau_N}^{n}|^\beta \Big].\label{ee3}
\end{align}

As the rest of proof is long and somewhat repetitive, we explain here the main idea by first focusing on the term with $b(x)$ in \eqref{ee1}. To estimate this term, we note that $\I_{\{t_j <\tau_N\}}b(X_{t_j}^{n}) = \I_{\{t_j <\tau_N\}} b(X_{t_j\wedge \eta(\tau_N)}^{n})$ and by using the fact that $|b(x)| \leq 1+ |x|$ and the Jensen equality, we obtain
\begin{gather}
\max_{i\in I_n}  | \sum_{j=0}^{i-1} \I_{\{t_j< \tau_N\}} b(X_{t_j}^n)\Delta t |^{\beta} \leq C\sum_{j = 0}^{n-1} [ 1+ \max_{i\in I_j} (X_{t_i \wedge \eta(\tau_N)}^{n})^\beta ] \Delta t. \label{E0}
\end{gather}
From the above, one is ready to apply the discrete Gr\"onwall's inequality as we have appropriately stopped the summand at $\eta(\tau_N)$ to make sure that it is bounded.

\noindent {\bf Step 1:} Let us consider the terms in \eqref{ee1}, that is
	\[
	\max_{i\in I_n}  | \sum_{j=0}^{i-1} \I_{\{t_j< \tau_N\}} b(X_{t_j}^n)\Delta t |^{\beta}
	+ \max_{i\in I_n} |\sum_{j=0}^{i-1} \I_{\{t_{j}<\tau_N\}} A_{t_j}^n \Delta{t}|^{\beta}.
		\]
	For the process $A^n$, we have by Jensen's inequality that
	\[
		\E\big[ \max_{i\in I_n} |\sum_{j=0}^{i-1} \I_{\{t_{j}<\tau_N\}} A_{t_j}^n\Delta{t}|^{\beta} \big]
	\leq
		n^{\beta-1} \sum_{j=0}^{n-1} \E[|A_{t_j}^n\Delta{t}|^{\beta}\I_{\{t_{j}<\tau_N\}}].
	\]
One can then apply Jensen's inequality, Lemma \ref{2.3}, Lemma \ref{probability:D:negative} and the fact that, for all $x\geq 0$, we have  $e^{-x} \leq n!x^{-n}$ for $n \in \mathbb{N}_+$ to obtain
	\begin{align}
		\E[|A_{t_j}^n\Delta{t}|^{\beta}\I_{\{t_{j}<\tau_N\}}]
	&\leq
		C ( \Delta t^{2\beta} + \E [  \E [ D_{t_{j}}^- \I_{\{t_{j}<\tau_N\}}| \mathcal{F}_{t_j} ]^{\beta} ] )\nonumber \\
	&  \leq
		C\Big( \Delta t^{2\beta} +  	  ( 1+ \E[(X_{t_j\wedge \eta(\tau_N)}^{n})^{\beta}])(\Delta t^\frac{\beta}{\alpha})
		\exp (- c (\Delta t)^{-(\frac{1}{\alpha}+\rho -1)} )\Big) \label{Aestimate}\\
		&\leq
		C \Big( \Delta t^{2\beta} + \E[(X_{t_j\wedge \eta(\tau_N)}^{n})^{\beta}]\Delta t ^\beta \Big).\nonumber 
	\end{align}
This gives the estimate
\begin{align}
\E\big[ \max_{i\in I_n} |\sum_{j=0}^{i-1} \I_{\{t_{j}<\tau_N\}} A_{t_j}^n\Delta{t}|^{\beta} \big] 
&\leq  C_T \Big( n^{-\beta}
+ \sum_{j=0}^{n-1}\E[(X_{t_j\wedge \eta(\tau_N)}^{n})^{\beta}]\Delta t \Big).
\label{E1}
\end{align}

\noindent
{\bf Step 2: } Given any discrete martingale $M$ and $\beta \in(1,\alpha)$, Doob's maximal inequality gives
	\begin{equation*}
	\mathbb{E}\big[\max_{i\in I_n} |M^{\tau}_{t_i}|^\beta\big]
	\leq  C_\beta\mathbb{E}[|M_{\tau}|^{\beta}].
	\end{equation*}
	In addition, if the martingale $M$ is square integrable
	then by the discrete time Burkholder-Davis-Gundy inequality
	we obtain
	\begin{equation}\label{BDG12}
	\mathbb{E}\big[\max_{i\in I_n} |M^{\tau}_{t_i}|^\beta\big] 
	\leq
	C\mathbb{E} \big[[M]_{\tau}^{\beta / 2} \big]
	= C\mathbb{E}\big[\big|\sum^{n-1}_{j=0} \I_{\{t_j < \tau\}} (\Delta M_{t_{j}})^2 \big|^\frac{\beta}{2}\big].
	\end{equation}

To this end, we recall
	that
	$
	N^n
	= \widetilde{M}^n
	+ \widehat{M}^n
	+ \overline{M}^n
	$,
	where 
$\overline M^n, \widehat M^n$ are $L^2$-martingales
	and $\widetilde M^n$ is a martingale with finite moments up to but excluding $\alpha$.
	We recall that their increments are given by
	\begin{align*}
	& \Delta \widetilde{M}^n_{t_i} := \frac{\Delta M^D_{t_i}}{4(1+k\Delta t)^2} 
	\qquad \Delta \widehat{M}^n_{t_i} :=  \frac{\sigma_1^2 (X^{n}_{t_i})^{2\gamma-1}}{2(1+k\Delta t)^2}((\Delta W_{t_i})^2-\Delta t ), \\
	 &\Delta \overline{M}^n_{t_i} := \Delta M_{t_i}^n-\sigma_1 (X^{n}_{t_i})^\gamma  \Delta W_{t_i}.
	\end{align*}
We first consider $\overline{M}^n$ and by Assumption \ref{Assumption:2}, we have $\kappa_n :=1+k\Delta t \geq \kappa_0$. From the fact that $x\mapsto \sqrt{x^+}$ is H\"older continuous with H\"older exponent $1/2$ and direct computations, we have 
	\begin{align}
(\Delta \overline{M}^n_{t_i})^2
& =\frac{\sigma_1^2 (X_{t_i}^{n})^{2\gamma-1}(\Delta W_{t_i})^2}{4\kappa_n^4}\Big|\sqrt{D_{t_{i}}^+}- \sqrt{4\kappa_n^4(X_{t_i}^{n})} \Big|^2 \nonumber\\
	&\leq \frac{\sigma_1^2 (X_{t_i}^{n})^{2\gamma-1} (\Delta W_{t_i})^2}{4\kappa_n^4}\  \Big|\sigma_1^2 (X_{t_i}^{n})^{2\gamma-1} (\Delta W_{t_i})^2 \label{Mbarsq}\\
	&\qquad +4\kappa_n
\big[ (1-\kappa_n^3)X_{t_i}^{n} + D_{t_{i}}^- + \big( a - \frac{\sigma_1^2}{2}(X^{n}_{t_i})^{2\gamma-1}\big)\Delta t+ \sigma_2  ( X^{n}_{t_i})^{\rho}\Delta Z_{t_i}   \big]
	\Big| \nonumber\\
&\leq C\Big[(X_{t_i}^{n})^{4\gamma-2} (\Delta W_{t_i})^4 + n^{-1}(X_{t_i}^{n})^{2\gamma} (\Delta W_{t_i})^2
	+ (X_{t_i}^{n})^{2\gamma-1} (\Delta W_{t_i})^2 D_{t_{i}}^- \nonumber\\
&\qquad +n^{-1}(X_{t_i}^{n})^{2\gamma-1} (\Delta W_{t_i})^2
	+n^{-1} (X_{t_i}^{n})^{4\gamma-2} (\Delta W_{t_i})^2
	+(X_{t_i}^{n})^{2\gamma-1+\rho} (\Delta W_{t_i})^2 |\Delta Z_{t_i}|\Big]. \nonumber
	\end{align}
By making use of the indicator function $\I_{\{t_i < \tau_N\}}$ in \eqref{BDG12}, all terms involving $X^n_{t_i}$ in \eqref{Mbarsq} can be stopped at $\eta(\tau_N)$.
We then proceed similarly to Lemma 3.2 of Gy{\"o}ngy and R{\'a}sonyi \cite{GyongyRasonyi} and take out a maximum from the sum to control powers. That gives us
\begin{align}
&\big| \sum^{n-1}_{j=0} \I_{\{t_j < \tau_N\}} (\Delta \overline{M}^n_{t_{j}})^2 \Big|^\frac{\beta}{2}\leq C\big|\frac{1}{ CK (1-\frac{\beta}{2} )}\max_{i\in I_n} (X_{t_i\wedge \eta(\tau_N)}^{n})^{\frac{\theta\beta}{2-\beta}} \Big|^{(1-\frac{\beta}{2})}\times \nonumber\\
&\quad\Big| \big[CK\big(1-\frac{\beta}{2}\big)\big]^{\frac{2}{\beta}(1-\frac{\beta}{2})}\sum^{n-1}_{j=0}  \Big[
(X_{t_j\wedge \eta(\tau_N)}^{n})^{4\gamma-2-\theta} (\Delta W_{t_j})^4 + n^{-1}(X_{t_j\wedge \eta(\tau_N)}^{n})^{2\gamma-\theta} (\Delta W_{t_j})^2\nonumber\\
&\qquad+ (X_{t_j\wedge \eta(\tau_N)}^{n})^{2\gamma-1-\theta} (\Delta W_{t_j})^2 D_{t_{j}}^-
+n^{-1}(X_{t_j\wedge \eta(\tau_N)}^{n})^{2\gamma-1-\theta} (\Delta W_{t_j})^2 \label{GR}\\
&\qquad+n^{-1} (X_{t_j\wedge \eta(\tau_N)}^{n})^{4\gamma-2-\theta} (\Delta W_{t_j})^2
+(X_{t_j\wedge \eta(\tau_N)}^{n})^{2\gamma-1+\rho-\theta} (\Delta W_{t_j})^2 |\Delta Z_{t_j}|\Big]\Big|^\frac{\beta}{2}, \nonumber
\end{align}
where $K>0$ is a constant to be later chosen. We choose $\theta \in ((2\gamma-\beta)^+, (2\gamma-1)\wedge (2-\beta))$ so that all powers of the scheme $X^n$ are positive and smaller than $\beta$.
For example, the fact that $\theta<2-\beta$ ensures that $\theta\beta/(2-\beta)<\beta$ in the first term of \eqref{GR}. On the other hand, in the sum, the largest power is $2\gamma$ and the condition $\theta>(2\gamma-\beta)^+$ guarantees that $2\gamma-\theta<\beta$, while for the smallest power $2\gamma-1$, we will have $2\gamma-1-\theta>0$. Then by applying Young's inequality with $p=2/(2-\beta)$ and $q=2/\beta$ to the right hand side of \eqref{GR}, we obtain 
\begin{align}
&\Big| \sum^{n-1}_{j=0} \I_{\{t_j < \tau_N\}} (\Delta \overline{M}^n_{t_{j}})^2 \Big|^\frac{\beta}{2} \leq
\frac{1}{ K }\max_{i\in I_n} (X_{t_i\wedge \eta(\tau_N)}^{n})^{\frac{\theta\beta}{2-\beta}}+C\, \frac{\beta}{2} \big[CK\big(1-\frac{\beta}{2}\big)\big]^{\frac{2}{\beta}(1-\frac{\beta}{2})}\times \nonumber \\
&\sum^{n-1}_{j=0}
    \Big[(X_{t_j\wedge \eta(\tau_N)}^{n})^{4\gamma-2-\theta} (\Delta W_{t_j})^4
    + n^{-1}(X_{t_j\wedge \eta(\tau_N)}^{n})^{2\gamma-\theta} (\Delta W_{t_j})^2 \nonumber\\
& \quad
		+ (X_{t_j\wedge \eta(\tau_N)}^{n})^{2\gamma-1-\theta} (\Delta W_{t_j})^2 D_{t_{j}}^-
		+n^{-1}(X_{t_j\wedge \eta(\tau_N)}^{n})^{2\gamma-1-\theta} (\Delta W_{t_j})^2\nonumber\\
& \quad +n^{-1} (X_{t_j\wedge \eta(\tau_N)}^{n})^{4\gamma-2-\theta} (\Delta W_{t_j})^2+(X_{t_j\wedge \eta(\tau_N)}^{n})^{2\gamma-1+\rho-\theta} (\Delta W_{t_j})^2 |\Delta Z_{t_j}|\Big]. \label{YY}
\end{align}

The next step is to take the expectation in equation \eqref{YY}. To do that, we first evaluate the term involving $D^-_{t_{j}}$. That is we consider the term
\begin{align*}
\E [(X_{t_j\wedge \eta(\tau_N)}^{n})^{2\gamma-1-\theta} (\Delta W_{t_j})^2 (-D_{t_{j}})\I_{\{D_{t_{j}} <  0\}}].
\end{align*}
We observe that 
\begin{align*}
	(X_{t_j\wedge \eta(\tau_N)}^{n})^{2\gamma-1-\theta}(\Delta W_{t_j})^2(-D_{t_{j}})
	&\leq 		
		\frac{\sigma_1^2}{2}(X^{n}_{t_j})^{4\gamma-2 - \theta}\Delta t (\Delta W_{t_j})^2
		+ \sigma_2  ( X^{n}_{t_j})^{2\gamma- 1 + \rho - \theta}\Delta Z_{t_i}(\Delta W_{t_j})^2
\end{align*}
By using H\"older's inequality with $\frac{1}{p} + \frac{1}{q} = 1$ and $q < \alpha$ together with Jensen's inequality we obtain
\begin{align*}
& 		\E\Big[\Big(\frac{\sigma_1^2}{2}(X^{n}_{t_j})^{4\gamma-2 - \theta}\Delta t (\Delta W_{t_j})^2
		+ \sigma_2  ( X^{n}_{t_j})^{2\gamma- 1 + \rho - \theta}\Delta Z_{t_j}(\Delta W_{t_j})^2\Big)^q \Big]^\frac{1}{q} \E[\I_{\{D_{t_{j}} <  0\}}]^\frac{1}{p} \\
	& \leq 	C \E\Big[(X^{n}_{t_j})^{(4\gamma-2 - \theta)q} (\Delta t)^q(\Delta W_{t_j})^{2q}
		+ ( X^{n}_{t_j})^{(2\gamma- 1 + \rho - \theta)q}(\Delta Z_{t_j})^q(\Delta W_{t_j})^{2q}\Big]^\frac{1}{q} \E[\I_{\{D_{t_{j}} <  0\}}]^\frac{1}{p} 
\end{align*}
We select $q\geq 1$ such that $(2\gamma- 1 + \rho - \theta)q < \beta$ and $(4\gamma- 2 - \theta)q < \beta$ which is possible since $\theta$ was previously chosen so that $(2\gamma- 1 + \rho - \theta) < \beta$ and $(4\gamma- 2 - \theta)< \beta$. From the inequality $|x|^a \leq 1+ |x|^b$ for $0<a \leq b$ the above can be further bounded by
\begin{align*}
& C \E\left[1+ (X^{n}_{t_j})^{(4\gamma-2 - \theta)q}
		+ (X^{n}_{t_j})^{(2\gamma- 1 + \rho - \theta)q} \right] \Delta t ^{1+\frac{1}{\alpha}} \E[\I_{\{D_{t_{j}} <  0\}}]^\frac{1}{p} \\
		& \leq C \big(1
		+ \E[|X^{n}_{t_j}|^{\beta}] \big) \Delta t ^{1+\frac{1}{\alpha}} \E[\I_{\{D_{t_{j}} <  0\}}]^\frac{1}{p}.
\end{align*}
By taking the expectation of \eqref{YY}, and
	using independence among the terms $X_{t_j}$, $\Delta W_{t_j}$ and $\Delta Z_{t_j}$, we obtain
\begin{align*}
&\E\big[\big| \sum^{n-1}_{j=0} \I_{\{t_j < \tau_N\}} (\Delta \overline{M}^n_{t_{j}})^2 \big|^\frac{\beta}{2}\big] \leq \frac{1}{ K } \E \big[\max_{i\in I_n} (X_{t_i\wedge \eta(\tau_N)}^{n})^{\frac{\theta\beta}{2-\beta}} \big]\\
&\quad+C \sum^{n-1}_{j=0} 
\Big[ \E[|X_{t_j\wedge \eta(\tau_N)}^{n}|^{4\gamma-2-\theta}] n^{-2} + \E[|X_{t_j\wedge \eta(\tau_N)}^{n}|^{2\gamma-\theta}] n^{-2}\\
&\qquad+  n^{-1-\frac{1}{\alpha}} \exp\big(-cn^{-(\frac{1}{\alpha}+ \rho - 1)}\big)
\big(1+\E[|X_{t_j\wedge \eta(\tau_N)}^{n}|^{\beta}]\big)
+\E[|X_{t_j\wedge \eta(\tau_N)}^{n}|^{2\gamma-1-\theta}] n^{-2} \nonumber\\
&\qquad+ \E[|X_{t_j\wedge \eta(\tau_N)}^{n}|^{4\gamma-2-\theta}] n^{-2} +\E[|X_{t_j\wedge \eta(\tau_N)}^{n}|^{2\gamma-1+\rho-\theta}] n^{-1-\frac{1}{\alpha}}\Big]. 
\end{align*}
    Note that given our choice of $\theta$, all the powers of $X^n$ in the right hand side above are positive and smaller than $\beta$. Then by using the inequality $|x|^a \leq 1+ |x|^b$ for $a \leq b$, the above can then be bounded by:
\[\frac{1}{ K } \E \big[\max_{i\in I_n} (X_{t_i\wedge \eta(\tau_N)}^{n})^{\beta} \big]+C_T + C\E\big[\int_0^T |X_{\eta(s)\wedge \eta(\tau_N)}^{n}|^{\beta}ds\,\big].\]
This shows that
\begin{equation}
\E \big[\max_{i\in I_n} |\overline{M}^{n,\tau_N}_{t_i}|^{\beta} \big]\leq
\frac{1}{ K } \E \big[\max_{i\in I_n} (X_{t_i\wedge \eta(\tau_N)}^{n})^{\beta} \big]+C_T + C\E\big[\int_0^T |X_{\eta(s)\wedge \eta(\tau_N)}^{n}|^{\beta}ds\,\big]. \label{E11}
\end{equation}

For the martingale $\widehat M^n$, we again make use of \eqref{BDG12} and proceed similarly to \eqref{GR}. That is we have
\begin{align*}
 &\big|\sum^{n-1}_{j=0} \I_{\{t_j < \tau_N\}} (\Delta \widehat{M}^n_{t_{j}})^2 \big|^\frac{\beta}{2}
=\big|\sum_{j=0}^{n-1} \I_{\{t_j < \tau_N\}}\frac{\sigma_1^4 (X_{t_j\wedge \tau_N}^{n})^{4\gamma-2}}{4\kappa_n^4} \big|(\Delta W_{t_j})^2-\Delta t \big|^2\big|^\frac{\beta}{2} \\
&\leq \frac{\sigma_1^{2\beta}}{2^\beta\kappa_0^{2\beta}} \Big|\frac{1}{ K (1-\frac{\beta}{2} )}\max_{i\in I_n} (X_{t_i\wedge \eta(\tau_N)}^{n})^{\frac{\vartheta\beta}{2-\beta}} \Big|^{(1-\frac{\beta}{2})} \\
&\qquad\times 
\Big| \big[K\big(1-\frac{\beta}{2}\big)\big]^{\frac{2}{\beta}(1-\frac{\beta}{2})}\sum^{n-1}_{j=0}   (X_{t_j\wedge \eta(\tau_N)}^{n})^{4\gamma-2-\vartheta} \big|(\Delta W_{t_j})^2-\Delta t \big|^2\Big|^\frac{\beta}{2},
\end{align*}
for some $K>0$ and $\vartheta \in ((4\gamma-2-\beta)^+, (4\gamma-2)\wedge (2-\beta))$, which is a constant that was chosen so that all powers of $X^n$ in the right hand side above are positive and smaller than $\beta$.
Let $C=\sigma_1^{2\beta}/2^\beta\kappa_0^{2\beta}$, we rewrite the right hand side above into
\begin{align*}
&C \Big|\frac{1}{C K (1-\frac{\beta}{2} )}\max_{i\in I_n} (X_{t_i\wedge \eta(\tau_N)}^{n})^{\frac{\vartheta\beta}{2-\beta}} \Big|^{(1-\frac{\beta}{2})}\\
&\qquad \qquad \times \Big| \big[CK\big(1-\frac{\beta}{2}\big)\big]^{\frac{2}{\beta}(1-\frac{\beta}{2})}\sum^{n-1}_{j=0}   (X_{t_j\wedge \eta(\tau_N)}^{n})^{4\gamma-2-\vartheta} \big|(\Delta W_{t_j})^2-\Delta t \big|^2\Big|^\frac{\beta}{2},
\end{align*}
and by applying Young's inequality with $p=2/(2-\beta)$ and $q=2/\beta$ we obtain
\begin{align*}
& \big| \sum^{n-1}_{j=0} \I_{\{t_j < \tau_N\}} (\Delta \widehat{M}^n_{t_{j}})^2 \big|^\frac{\beta}{2}\leq \frac{1}{ K }\max_{i\in I_n} (X_{t_i\wedge \eta(\tau_N)}^{n})^{\frac{\theta\beta}{2-\beta}} \\
&\qquad +C\, \frac{\beta}{2} \big[CK\big(1-\frac{\beta}{2}\big)\big]^{\frac{2}{\beta}(1-\frac{\beta}{2})}\sum^{n-1}_{j=0} (X_{t_j\wedge \eta(\tau_N)}^{n})^{4\gamma-2-\vartheta} \big|(\Delta W_{t_j})^2-\Delta t \big|^2.
\end{align*}
Again given the choice of $\vartheta$, both powers of $X^n$ in the above are positive and smaller than $\beta$, and by making use of the in equality $|x|^a \leq 1+ |x|^b$ for $0< a\leq b$, we have
\[\E\big[\big| \sum^{n-1}_{j=0} \I_{\{t_j < \tau_N\}} (\Delta \widehat{M}^n_{t_{j}})^2 \big|^\frac{\beta}{2}\big]
\leq\frac{1}{ K } \E \big[\max_{i\in I_n} (X_{t_i\wedge \eta(\tau_N)}^{n})^{\beta} \big]+C_T + C\int_0^T \E[|X_{\eta(s)\wedge \eta(\tau_N)}^{n}|^{\beta}]ds.\]
Thus for $\widehat{M}^n$ we have the following estimate
\begin{equation}
\E \big[\max_{i\in I_n} |\widehat{M}^{n,\tau_N}_{t_i}|^{\beta} \big]\leq
\frac{1}{ K } \E \big[\max_{i\in I_n} (X_{t_i\wedge \eta(\tau_N)}^{n})^{\beta} \big]+C_T + C\int_0^T \E [|X_{\eta(s)\wedge \eta(\tau_N)}^{n}|^{\beta}]ds. \label{E2}
\end{equation}

To estimate the non-square-integrable martingale $\widetilde M^n$, we apply Doob's maximal inequality and Jensen's inequality to obtain that 
\begin{align*}
\E\big[\,\max_{i\in I_n} |  \widetilde M_{t_i\wedge \tau_N}^{n}|^\beta \big]
& \leq
(1-\beta^{-1})^{-\beta} \mathbb{E}\big[\big|  \sum_{j=0}^{n-1} \I_{\{t_j<\tau_N\}}\Delta  \widetilde{M}^n_{t_j}\big|^{\beta}\big]\\
& \leq
(1-\beta^{-1})^{-\beta}n^{\beta-1}\sum_{j=0}^{n-1} \E [| \Delta \widetilde M^n_{t_j}|^{\beta}\I_{\{t_j<\tau_N\}}].
\end{align*}
We recall from \eqref{tildeM} that $\Delta \wt M^n$ is of the form $\Delta \wt{M}^n_{t_i} := \frac{\Delta M^D_{t_i}}{2(1+k\Delta t)^2}$. By using Lemma \ref{2.3} and independence, the summand can be estimated as follows, 
\begin{align}
\E[|\Delta \widetilde M^n_{t_j}|^{\beta}\I_{\{t_j<\tau_N\}}]
&\leq
C_T \E[(X_{t_j \wedge \eta(\tau_N)}^{n})^{\beta}]\Delta t ^\frac{\beta}{\alpha}
\exp (- c (\Delta t)^{-(\frac{1}{\alpha} + \rho - 1 )} )
\label{MTilde2}\\
& \leq C_T \E[(X_{t_j \wedge \eta(\tau_N)}^{n})^{\beta}]\Delta t^2  \nonumber
\end{align}
 This gives the estimate
\begin{gather}
\E\big[\max_{i\in I_n} |\widetilde M_{t_i\wedge \tau_N}^{n}|^\beta \big]
	\leq C_{T} \big(\,\sum_{j=0}^{n-1} \E[(X_{t_j \wedge \eta(\tau_N)}^{n})^{\beta}] \Delta t^2 \, \big) 
	\leq C_{T} \big(\,\sum_{j=0}^{n-1} \E[\,\max_{i \in I_j}(X_{t_i \wedge \eta(\tau_N)}^{n})^{\beta}] \Delta t^2 \, \big). \label{E3}
\end{gather}

\vskip5pt
\noindent
{\bf Step 3:} We compute the stochastic integrals against $L^2$-martingales in \eqref{ee2}. By using the set inclusion $\{s \leq \tau_N\} \subset \{\eta(s) \leq \eta(\tau_N)\}$, we can ease the computation by consider the continuous extension of the integrals and applying the Burkholder-Davis-Gundy inequality to obtain
	the following
	\begin{align*}
	\E\big[ \max_{i\in I_n} |\int_0^{t_i \wedge \tau_N}\sigma_1(X_{\eta(s)\wedge \eta(\tau_N)}^{n})^\gamma dW_s |^{\beta} \big]
	\leq
	C \E \big[ \big|\int_0^{T} |X_{\eta(s)\wedge \eta(\tau_N)}^{n}|^2 ds\big|^{\frac{\beta}{2}} \big] + CT^{\frac{\beta}{2}},
	\end{align*}
	where we have used the inequality $|x|^a \leq 1+|x|^b$ for $0<a\leq b$.	Similarly, we have 
	\begin{align*}
&\quad\E \big[ \max_{i\in I_n} |\int_0^{t_i \wedge \tau_N} \int^1_0  \sigma_2(X_{\eta(s)\wedge \eta(\tau_N)}^{n})^{\rho} z \widetilde{N}(dz,ds)|^\beta \big]\\
&\leq C\E \big[ \big| \int_0^{\tau_N} \int_0^1 ( X_{\eta(s)\wedge \eta(\tau_N)}^{n})^{2\rho} z^2 \nu(dz)ds \big|^{\frac{\beta}{2}} \Big]\leq C_{\alpha, \beta}\big(\E  \big|\int_0^{T} |X_{\eta(s)\wedge \eta(\tau_N)}^{n}|^2 ds\big|^{\frac{\beta}{2}}  + T^{\frac{\beta}{2}}\big).
\end{align*}
To this end, we can apply similar techniques to those in \eqref{GR} and \eqref{YY} from Step 2. That is we first write
\begin{align*}
& 	\big|\int^{T}_{0}   |X^{n}_{\eta(s)\wedge \eta(\tau_N)}|^2 ds \big|^\frac{\beta}{2} 
= \big|\int^{T}_{0}
\big(|X^{n}_{\eta(s)\wedge \eta(\tau_N)}|^{\beta}\big)^{\frac{2}{\beta}(1-\frac{\beta}{2})}|X^{n}_{\eta(s)\wedge \eta(\tau_N)}|^\beta ds \big|^\frac{\beta}{2}\\
& \leq   \big|\frac{1}{K (1-\frac{\beta}{2} )}\max_{i\in I_n} |X^{n}_{t_i\wedge \eta(\tau_N)}|^{\beta}\big|^{(1-\frac{\beta}{2})}\times 
\big|\big[K\big(1-\frac{\beta}{2}\big)\big]^{\frac{2}{\beta}(1-\frac{\beta}{2})}\int^{T}_{0}  |X^{n}_{\eta(s)\wedge \eta(\tau_N)}|^\beta ds \big|^\frac{\beta}{2} ,
\end{align*}
and then apply Young's inequality with $p = \frac{2}{2-\beta}$ and $q = \frac{2}{\beta}$ to obtain
\begin{align*}
\Big|\int^{T}_{0}   |X^{n}_{\eta(s)\wedge \eta(\tau_N)}|^2 ds \Big|^\frac{\beta}{2} \leq \frac{1}{K}\max_{i\in I_n} |X^{n}_{t_i\wedge \eta(\tau_N)}|^{\beta}
+ \frac{\beta}{2}\big[K\big(1-\frac{\beta}{2}\big)\big]^{\frac{2}{\beta}(1-\frac{\beta}{2})}\int^{T}_{0}  |X^{n}_{\eta(s)\wedge \eta(\tau_N)}|^\beta ds,
\end{align*}
and finally, we have
\begin{equation}\label{E4}
\E \big[ \big|\int_0^T |X_{\eta(s)\wedge \eta(\tau_N)}^{n}|^2 ds\big|^{\frac{\beta}{2}} \big]
\leq
\frac{1}{K} \E \big[ \max_{i\in I_n} |X^{n}_{t_i\wedge \eta(\tau_N)}|^{\beta} \big]
+ C_{\beta,K} \int^{T}_{0}  \E[|X^{n}_{\eta(s)\wedge \eta(\tau_N)}|^\beta] ds,
\end{equation}
where again the constant $K>0$ can be freely chosen.

\vskip3pt
\noindent {\bf Step 4:} The last term to be computed is the Poisson integral in \eqref{ee3} given by
\begin{align*}
V_{t_i} = \sigma_2\int_0^{t_i}\int_1^\infty ( X_{\eta(s)}^{n})^{\rho} z {N}(dz,ds).
\end{align*}
We first note that the process $V$ is positive. Then by applying the It\^o formula, Jensen's inequality and the fact that $\{s\leq \tau_N\}\subset \{\eta(s) \leq \eta(\tau_N)\}$, we have
\begin{align*}
\max_{i\in I_n} |V_{t_i}^{\tau_N}|^\beta \leq	\sup_{t\leq T} |V_t^{\tau_N}|^\beta
& = \int_{0}^{\tau_N} \int^\infty_1
((V^{\tau_N}_{s-} + \sigma_2 (X_{\eta(s)\wedge\eta(\tau_N)}^{n})^{\rho}z)^\beta 
- ( V_{s-}^{\tau_N})^\beta)
N(dz,ds)\\
& \leq C\int^T_0 \int^\infty_1 \,
(|V_{s-}^{\tau_N}|^\beta + |X_{\eta(s)\wedge \eta(\tau_N)}^{n}|^{\beta\rho} z^\beta)\,  N(dz,ds).
\end{align*}
By taking the expected value and noting that $\beta \in [1,\alpha)$, we have
\begin{align*}
\mathbb{E} \big[\,  \sup_{t\leq T} |V_t^{\tau_N}|^\beta\big] & \leq
C \int^T_0  (\mathbb{E} \big[|V_{s}^{\tau_N}|^\beta\big]
+ \mathbb{E} \big[|X_{\eta(s)\wedge\eta(\tau_N)}^n|^{\beta\rho}\big]) \, ds \\
& \leq 
C \int^T_0  \mathbb{E} \big[\,  \sup_{u\leq s} |V_u^{\tau_N}|^\beta\big]\, ds
+C\int^T_0 \mathbb{E} \big[|X_{\eta(s)\wedge\eta(\tau_N)}^n|^{\beta\rho}\big] \, ds  .
\end{align*}
Using the fact that $\rho < 1$, by using the inequality $|x|^a \leq 1+ |x|^b$ for $a \leq b$, Jensen's inequality and Gr\"onwall's inequality, we obtain
\begin{align}
\mathbb{E} \big[\,\sup_{t\leq T} |V_t^{\tau_N}|^\beta\big] 
& \leq C_T\Big[1+  \int^T_0 \mathbb{E} \big[|X_{\eta(s)\wedge\eta(\tau_N)}^n|^{\beta}\big] \, ds  \Big] \leq C_T\Big[  1 +  \sum_{j=0}^{n-1} \mathbb{E} \big[ \max_{i\in I_j} |X_{t_i\wedge\eta(\tau_N)}^n|^{\beta} \big] \Delta t \Big]
 \label{E5}
\end{align}
	
\vskip3pt
\noindent {\bf Step 5:}
By combining the estimates \eqref{E0}, \eqref{E1}, \eqref{E11}, \eqref{E2}, \eqref{E3}, \eqref{E4} and \eqref{E5} to obtain
\begin{align*}
\E\big[\max_{i\in I_n} ( X^{n}_{t_i\wedge \eta(\tau_N)} )^\beta\big]
&\leq C_T \Big(1 + n^{-\beta}+ \sum_{j=0}^{n-1} \E \big[\max_{i \in I_j} (X^{n}_{t_i\wedge \eta(\tau_N)})^\beta \big]\Delta t \Big)
+ \frac{4}{ K } \E \big[\max_{i\in I_n} (X_{t_i\wedge \eta(\tau_N)}^{n})^{\beta} \big],
\end{align*}
and then by choosing, for example $K=8$, we obtain
\begin{equation*}
\E\big[\max_{i\in I_n}\, ( X^{n}_{t_i\wedge \eta(\tau_N)} )^\beta\big] \leq C_T \big(1 + \sum_{j=0}^{n-1} \E\big[\max_{i \in I_j}\, (X^{n}_{t_i\wedge \eta(\tau_N)})^\beta\big] \Delta t \big).
\end{equation*}
From the above one can conclude by applying discrete Gr\"onwall's inequality and letting $N\uparrow \infty$.
\end{proof}

\begin{proof}[{\bf Proof of Lemma \ref{martingale:components}}]
Let us first record that 
\begin{gather*}
 \Delta \overline{M}^n_{t_i}   := \Delta M_{t_i}^n-\sigma_1 (X^{n}_{t_i})^\gamma  \Delta W_{t_i} \quad \mathrm{and}\quad  \Delta \widehat{M}^n_{t_i}    :=  \frac{\sigma_1^2 }{2}\frac{(X^{n}_{t_i})^{2\gamma-1}}{(1+k\Delta t)^2}((\Delta W_{t_i})^2-\Delta t ).
\end{gather*}
For the martingale $\overline{M}^n$, we apply H{\"o}lder's inequality, inequality \eqref{Mbarsq} and Lemma \ref{scheme:beta:moments} to obtain for $\beta \in [1,2]$
\begin{align*}
& \E[|\Delta \overline{M}^n_{t_{i}}|^\beta]  \leq |\E[|\Delta \overline{M}^n_{t_{i}}|^2]|^{\frac{\beta}{2}}\\
&\leq
		C \Big| n^{-2} \E[|X_{t_i}^{n}]|^{4\gamma -2} 
		+ n^{-2} \E[|X_{t_i}^{n}|^{2\gamma}]
		+ \E[|X_{t_i}^{n}|^{2\gamma-1}] n^{-1-\frac{1}{\alpha}} \exp\big(- C_2 n^{\rho-1+\frac{1}{\alpha}}\big)\\
& \quad
		+n^{-2}\E[|X_{t_i}^{n}|^{2\gamma-1}]+n^{-2} \E[|X_{t_i}^{n}|^{4\gamma-2}]
		+ \sigma_2 n^{-1-\frac{1}{\alpha}}
		\E[|X_{t_i}^{n}|^{2\gamma-1+\rho}] \Big|^{\frac{\beta}{2}} \leq C n^{-\frac{\beta}{2}-\frac{\beta}{2\alpha}}.
\end{align*}
Then, similar to \eqref{BDG12}, we apply the discrete time Burkholder-Davis-Gundy inequality and Jensen's inequality to obtain
	\[\E[|\overline{M}^n_{t_{j+1}}|^{\beta}] \leq C\E\big[ \sum_{i=0}^{n-1}|\Delta \overline{M}^n_{t_{i}}|^2  \big]^{\frac{\beta}{2}}
	\leq C n^{-\frac{\beta}{2\alpha}}.\]	
In the case when $\sigma_2 = 0$, we observe that $\E[|\Delta \overline{M}^n_{t_{i}}|^\beta]   \leq n^{\beta}$ and $\E[|\overline{M}^n_{t_{i}}|^\beta]   \leq n^{-\frac{\beta}{2}}$.	For the martingale $\widehat M^n$, we have for $\beta \in [1,2]$
	\[ \E[|\Delta \widehat{M}^n_{t_i}|^\beta]
	=
	C \E[|X^{n}_{t_i}|^{\beta(2\gamma-1)}]
	\E[||\Delta W_{t_i}|^2-\Delta t |^\beta] \leq C n^{-\beta},\]
	and also $\E[|\Delta \widehat{M}^n_{t_i}|^2] \leq n^{-2}$ since $4\gamma -2 < \alpha$.
	From this we deduce that
	\[
	\E[|\widehat{M}^n_{t_{j+1}}|^{\beta}] \leq C\E\big[ \sum_{i=0}^{n-1}|\Delta \widehat{M}^n_{t_{i}}|^2  \big]^{\frac{\beta}{2}}
	\leq C n^{-\frac{\beta}{2}}.
	\]	
which concludes the proof.
\end{proof}

\begin{proof}[{\bf Proof of Lemma \ref{alphamartingale}}]
Let us first record that 
\begin{align*}
  \Delta \wt{M}^n_{t_i} := \frac{\Delta M^D_{t_i}}{4(1+k\Delta t)^2}\quad
\mathrm{and} \quad   A_{t_i}^n \Delta t  :=  
(\Delta t)^2\Big[
		 -ak_n 
		-
	\frac{\sigma_1^2}{2}\frac{k (X^{n}_{t_i})^{2\gamma-1}  }{(1+k\Delta t)^2}\Big]
		+\frac{\E [D_{t_{i}}^- | \mathcal{F}_{t_i}]}{4(1+k\Delta t)^2},
\end{align*}
where $\Delta M_{t_i}^D := D_{t_{i}}^- - \E [D_{t_{i}}^- | \mathcal{F}_{t_i}]$. From the estimate in \eqref{Aestimate} and the inequality $e^{-x} \leq m!x^{-m}$ for $x \geq 0$ and $m\in \mathbb{N}_+$ we deduce that $\E[|A_{t_i}^n\Delta{t}|^{\beta}] \leq n^{-2\beta}$. Then by an application of Jensen's inequality, we see that
	\begin{displaymath}
	\E \big[\big|  \int_{(0,t]} A_{\eta(s)}^n ds \big|^{\beta}\big]
	\leq
	n^{\beta-1} \sum_{i=0}^n \E[|A_{t_i}^n\Delta{t}|^{\beta}] \leq C_Tn^{-\beta}.
	\end{displaymath}		
For the martingale $\widetilde M^n$, from \eqref{MTilde2} we see that $\E[|\Delta \widetilde M^n_{t_i}|^\beta] \leq  C_Tn^{-2\beta}$,
	and then by Doob's maximal inequality and Jensen's inequality together give us the following
	\[\E[| \widetilde M^n_{t_{j+1}}|^\beta] \leq C_Tn^{\beta-1} \sum_{j=0}^{n-1}\E[|\Delta \widetilde M^n_{t_j}|^\beta]
	\leq C n^{-\beta}
	\]
which concludes the proof.
\end{proof}

\begin{proof}[{\bf Proof of Corollary \ref{cor1}}]
Follows immediately from Lemma \ref{alphamartingale}.
\end{proof}

\begin{proof}[{\bf Proof of Lemma \ref{LocalLocal}}]
For $\beta \in [1,\alpha)$, we see from \eqref{continuous:extension} that by Jensen's inequality $\E[|\overline X^{n}_t - X^{n}_{\eta(t)}|^{\beta}]$ can be upper estimated by
\begin{align*}
&\E[| \overline{X}_t^{n}  - X^n_{\eta(t)}|^\beta] 
\leq \big(1+\frac{|k|^{\beta}}{\kappa_0^{\beta}}\E[|X^{n}_{\eta(t)}|^{\beta}] \big)n^{-\beta} + \sigma_1^{\beta} \E[|X^{n}_{\eta(t)}|^{\gamma\beta}] n^{-\frac{\beta}{2}} + \sigma_2^{\beta}\E[|X^{n}_{\eta(t)}|^{\beta\rho }] n^{-\frac{\beta}{\alpha}}\\
&\qquad  + \E[|\overline M^n_t - \overline M^n_{\eta(t)}|^\beta] + \E[|\widehat M^n_t -\widehat M^n_{\eta(t)}|^\beta] + \E[|\widetilde M^n_{\eta(t)}|^\beta] + \E[|\int^{\eta(t)}_0 A_{\eta(s)}^n ds|^\beta].
\end{align*}
The above can be further estimated by using H\"older's inequality, Lemma \ref{scheme:beta:moments} and Lemma \ref{martingale:components}, giving us $\E[|\overline X^{n}_t - X^{n}_{\eta(t)}|^{\beta}]\leq  C_Tn^{-\frac{\beta}{2}}$.
\end{proof}

\begin{proof}[{\bf Proof of Lemma \ref{LTE1.3}}]
	Let $x \in \mathbb{R}$, $y \in \mathbb{R} \setminus\{0\}$ with $xy \geq 0$ and $z >0$.
	By the second order Taylor's expansion for $\phi_{\delta,\varepsilon}$, it follows from \eqref{YW4} that
	\begin{align*}
	\phi_{\delta,\varepsilon}(y+xz)-\phi_{\delta,\varepsilon}(y)-xz\phi_{\delta,\varepsilon}'(y)
	&=|xz|^2 \int_{0}^{1} \theta \phi_{\delta,\varepsilon}''(y+ \theta xz)d \theta
	\leq \frac{2|xz|^2}{\log \delta} \int_{0}^{1} \frac{\theta \I_{[\varepsilon/\delta,\varepsilon]}(|y+ \theta xz|) }{|y+ \theta xz|} d \theta.
	\end{align*}
	Since $xy \geq 0$, we have $|y| \leq |y+\theta xz|$ and $\I_{[\varepsilon/\delta,\varepsilon]}(|y+ \theta xz|) \leq \I_{(0,\varepsilon]}(|y|)$.
	Hence we obtain
	\begin{align}\label{key_lem_2}
	\phi_{\delta,\varepsilon}(y+xz)-\phi_{\delta,\varepsilon}(y)-xz\phi_{\delta,\varepsilon}'(y)
	& \leq \frac{2 |xz|^2 \I_{(0,\varepsilon]}(|y|)}{ \log \delta} \left(\frac{1}{|y|} \wedge \frac{\delta}{\varepsilon}\right).
	\end{align}
	Moreover, since $xy \geq 0$, by \eqref{YW1} we have $x \phi_{\delta,\varepsilon}'(y) \geq 0$. This together with the fact that the right hand side of \eqref{key_lem_2} has $\I_{(0,\varepsilon]}(|y|)$, we obtain
	\begin{align}\label{key_lem_3}
	\phi_{\delta,\varepsilon}(y+xz)-\phi_{\delta,\varepsilon}(y)-xz\phi_{\delta,\varepsilon}'(y)
	&\leq \I_{(0,\varepsilon]}(|y|) \{\phi_{\delta,\varepsilon}(y+xz)-\phi_{\delta,\varepsilon}(y) \} \notag\\
	&=\I_{(0,\varepsilon]}(|y|) xz \int_{0}^{1}\phi_{\delta,\varepsilon}'(y+\theta xz) d \theta
	\leq \I_{(0,\varepsilon]}(|y|) |xz|.
	\end{align}
	The result then follows from \eqref{key_lem_2} and \eqref{key_lem_3}.
\end{proof}

\begin{proof}[{\bf Proof of Lemma \ref{xPrime}}]
Before proceeding, similar to Li and Mytnik \cite{LiMytnik}, we introduce the quantity
\begin{align*}
\alpha_{\nu}:=\inf\{\beta>1: \lim_{x \to 0+} x^{\beta-1} \int_{x}^{\infty}z \, \nu(dz)=0\}
\end{align*}

\noindent which represents the order of singularity of the L\'evy measure at zero. For instance, if $\nu$ is the L\'evy measure of a spectrally positive $\alpha$-stable like process, that is, if $\nu(dz) = \textbf{1}_{(0,\infty)}(z) g(z)/ z^{1+\alpha} \, dz$, with $\alpha \in [1,2]$, $g$ being a non-negative bounded and continuous function on $\R_+$, one has $\alpha_\nu = \alpha$ and the infimum is actually achieved. Also we recall from Lemma 2.1 of \cite{LiMytnik} that $\alpha_\nu \in [1,2]$ and, moreover, for any $\alpha_0>\alpha_{\nu}$, 
\begin{equation}
\lim_{x \to 0+} x^{\alpha_0-2} \int_{0}^{x} z^2 \nu(dz)=0 \quad \mathrm{and} \quad \lim_{x \to 0+} x^{\alpha_0-1} \int_{x}^{\infty}z \nu(dz)=0. \label{left:right:tail:asymptotics}
\end{equation}

To this end, note that for $x = x'$, the claimed inequality is trivially true. Therefore we suppose that $x\neq x'$. To obtain the required estimate, we consider
	\begin{align*}
	&\left| \phi_{\delta,\varepsilon}(y+xz)-\phi_{\delta,\varepsilon}(y+x'z)-(x-x')z\phi_{\delta,\varepsilon}'(y) \right|\\
	&\leq \left| \phi_{\delta,\varepsilon}(y+xz)-\phi_{\delta,\varepsilon}(y+x'z)-(x-x')z\phi_{\delta,\varepsilon}'(y+x'z) \right|
	+|x-x'||z|\left| \phi_{\delta,\varepsilon}'(y)-\phi_{\delta,\varepsilon}'(y+x'z) \right| =: A_z + B_z.
	\end{align*}	
Let $u\in (0,\infty)$. To estimate $A_z$ for $z \in (0,u)$, we apply a second order Taylor's expansion for $\phi_{\delta,\varepsilon}$ and use \eqref{YW4}. This gives
	\begin{align*}
	A_z \leq |x-x'|^2 |z|^2 \int_{0}^{1} \theta \phi_{\delta,\varepsilon}''(y+ \theta xz+(1-\theta)x'z)d \theta \leq |x-x'|^2 |z|^2 \frac{2\delta}{\varepsilon \log \delta}
\end{align*}	
while for $z \in (u,\infty)$, by the mean value theorem and \eqref{YW2}, we get 
	\begin{align*}
	A_z & \leq |x-x'| |z| \int_{0}^{1} \left|\phi'_{\delta,\varepsilon}(y+\theta xz+(1-\theta)x'z)-\phi'_{\delta,\varepsilon}(y) \right| d\theta
	\leq 2 |x-x'| |z|.
	\end{align*}
To this end, for some positive $k$ (which is chosen later) one considers the two cases $|x-x'| \leq k$ and $|x-x'|\geq k$. In the first case, take $u=1$ so that
$$
\int_0^\infty A_z \nu(dz) \leq C \left\{ |x-x'|^2 \frac{2\delta}{\varepsilon \log(\delta)} + |x-x'|\right\} \leq C_k \left\{ |x-x'|^{\alpha}\frac{2\delta}{\varepsilon \log(\delta)} + |x-x'|\right\} 
$$
\noindent for any $\alpha_0 \in [0, 2]$.
In the second case $|x-x'|\geq k$, we select $u = |x-x'|^{-1} \in (0,k^{-1})$ and remark that
$$
\int_0^{|x-x'|^{-1}} z^2 \nu(dz) = |x-x'|^{\alpha_0-2} \frac{1}{|x-x'|^{\alpha_0-2}} \int_0^{|x-x'|^{-1}} z^2 \nu(dz) \leq |x-x'|^{\alpha_0-2} \sup_{\varepsilon \in [0,k^{-1}]}I^{1}_\varepsilon
$$
\noindent with $I^{1}_\varepsilon := \varepsilon^{\alpha_0-2} \int_0^{\varepsilon} z^2 \nu(dz)$. Note that $\lim_{\varepsilon \downarrow 0}I^{1}_\varepsilon = 0$ for any $\alpha_0 > \alpha_\nu$ by Lemma 2.1 of Li and Mytnik \cite{LiMytnik}. Similarly, we have
$$
\int_{|x-x'|^{-1}}^{\infty} z \nu(dz) = |x-x'|^{\alpha_0-1} \frac{1}{|x-x'|^{\alpha_0-1}} \int_{|x-x'|^{-1}}^{\infty} z \nu(dz) \leq |x-x'|^{\alpha_0-1} \sup_{\varepsilon \in [0,k^{-1}]}I^{2}_\varepsilon
$$
\noindent with $I^{2}_\varepsilon := \varepsilon^{\alpha_0-1} \int_{\varepsilon}^{\infty} z \nu(dz)$. Again we note that $\lim_{\varepsilon \downarrow 0}I^{2}_\varepsilon = 0$ for any $\alpha_0 > \alpha_\nu$ by definition of $\alpha_\nu$. We thus conclude that for any positive constant $C$, one can pick $k$ sufficiently large, such that 
\begin{gather}
\forall \alpha_0 \in (\alpha_\nu, 2], \quad \int^\infty_0 A_z\, \nu(dz) \leq C \left\{|x-x'|^{\alpha_0}  \frac{2\delta}{\epsilon \log \delta} + |x-x'| \right\}. \label{eqA}
\end{gather}
We point out that, as previously mentioned, in the case of an spectrally positive $\alpha$-stable process we can take in the above $\alpha_\nu = \alpha$ and $\alpha_0 \in [\alpha,2]$. This is because, to obtain \eqref{eqA}, you only need the quantities $I^1_\epsilon$ and $I^2_\epsilon$ to be bounded and, in the case of the spectrally positive $\alpha$-stable process, one can check this through direct computations.

 We now deal with the term $B_z$. Let us first assume that $y x' \geq 0$. We perform a second order Taylor's expansion and employ \eqref{YW4} to obtain
\begin{align*}
	B_z & \leq |x'| |x-x'| |z|^2 \int_{0}^{1} \phi_{\delta,\varepsilon}''(y+ \theta x'z) \theta d\theta	\\
		& \leq  2 \frac{|x'||x-x'|z^2}{\log(\delta)} \int_0^1   \frac{\I_{[\varepsilon/\delta,\varepsilon]}(|y+\theta x' z|)}{|y+\theta x'z|} \, \theta d\theta
		 \leq \frac{|x'||x-x'|z^2 \I_{[0,\varepsilon)}(|y|)}{\log \delta}  \bigg( \frac{1}{|y|}\wedge \frac{\delta}{\varepsilon} \bigg)
\end{align*}

\noindent where for the last inequality we used the fact that $|y| \leq |y+\theta x'z|$ since $yx'\geq0$ and $z\geq0$.  Also, it is readily seen that $B_z \leq 2|x-x'||z|$. We thus conclude that if $y x'\geq0$, for any $u \in (0,\infty)$
$$
\int_{0}^{\infty} B_z \nu(dz) \leq |x'||x-x'| \frac{\I_{[0,\varepsilon)}(|y|)}{\log \delta}  \bigg( \frac{1}{|y|}\wedge \frac{\delta}{\varepsilon} \bigg) \int_{0}^{u} z^2 \nu(dz) + 2 |x-x'| \int_{u}^{\infty} z \nu(dz).
$$
We now treat the case $y x' < 0$. We split the $\nu(dz)$-integral into the two disjoint sets $\frac{|y|}{2|x'|}\wedge 1 < z$ and $\frac{|y|}{2|x'|}\wedge 1 \geq z$. In the case of small jumps, i.e. on the set $\frac{|y|}{2|x'|} \wedge 1 \geq z$, from the mean-value theorem and \eqref{YW4}, we obtain
\begin{align*}
B_z & = |x'||x-x'|z^2 \int_{0}^{1} \phi_{\delta,\varepsilon}''(y+ \theta x'z)d \theta	\\
& \leq 2 |x'||x-x'| z^2 \int_0^1 \frac{\I_{[\varepsilon / \delta,\varepsilon)}(|y+ \theta x'z|)}{|y+ \theta x'z|\log(\delta)} \, d\theta \leq 2 |x'||x-x'| z^2 \frac{\I_{[0,\varepsilon/2)}(|y|)}{|y|\log(\delta)}
\end{align*}
where, for the last inequality, we used the fact that $yx' < 0$ and $\frac{|y|}{2|x'|} \wedge 1 \geq z$ imply
\begin{gather*}
|y+ \theta z x'| = |y(1 - \theta z|x'||y|^{-1}|)|  \geq \frac{|y|}{2}.
\end{gather*}
Now, observe that since $y x' < 0$, one has $0\leq -\mathrm{sign}(y) x' = |x'| \leq \kappa |y|$, which combined with the previous computations yield
$$
\int_0^{\frac{|y|}{2|x'|} \wedge 1} B_z \nu(dz) \leq 2 \kappa \frac{|x-x'|}{\log(\delta)} \int_{0}^{1} z^2 \nu(dz). 
$$
For large jumps, i.e. one the set $\frac{|y|}{2|x'|} \wedge 1 \leq z$, from \eqref{YW2}, we simply note that $B_z \leq 2|x-x'||z|$ so that
$$
\int_{\frac{|y|}{2|x'|} \wedge 1}^{\infty} B_z \nu(dz) \leq 2 |x-x'| \int_{\frac{|y|}{2|x'|} \wedge 1}^{\infty} z \nu(dz) \leq C |x-x'|
$$
\noindent where we used the facts that $|x'| \leq \kappa |y|$ and $\int_{1}^{\infty} z \nu(dz)$ for the last inequality. The proof is now complete.
\end{proof}

\end{document}